%

\documentclass[a4paper,12pt]{article}
\usepackage[dvips]{graphicx}
\usepackage{amsmath}
\usepackage{hyperref}
\usepackage{amssymb}
\usepackage{amsfonts}
\textwidth=160mm
\textheight=230mm
\topmargin=-10mm
\oddsidemargin=5mm
\mathsurround=2pt

\usepackage{ucs}
\usepackage[utf8x]{inputenc}
\usepackage[T1]{fontenc}
\usepackage{url}
\usepackage{psfrag}
\usepackage{dsfont}
\usepackage{color}

\usepackage{float}
\usepackage{rotating}
\def\del  {\partial}
\def\eps{\varepsilon}

\def\R{\mathbb{R}}

\def\Div{\operatorname{div}}


\newtheorem{remark}{\textbf{Remark}}
\newtheorem{theorem}{\textbf{Theorem}}
\newtheorem{lemma}{\textbf{Lemma}}

\def\del  {\partial}
\def\eps{\varepsilon}

\def\R{\mathbb{R}}

\newenvironment{proof}{%
{\noindent \bf Proof : }%
}{%
\hfill$\Box$\\%
}

\begin{document}

\title {MODELS OF NONLINEAR ACOUSTICS VIEWED AS\\ AN APPROXIMATION OF THE KUZNETSOV EQUATION}

\author{Adrien Dekkers\footnote{Laboratory  Math\'ematiques et Informatique pour la Complexit\'e et les Syst\`emes, CentraleSup\'elec, Univ\'ersit\'e Paris-Saclay, Campus de Gif-sur-Yvette, Plateau de Moulon, 
3 rue Joliot Curie, 91190 Gif-sur-Yvette, France, 
(adrien.dekkers@centralesupelec.fr).}
, \quad Vladimir Khodygo\footnote{Department of Biological, Environmental and Rural Sciences,Aberystwyth University,
Penglais Campus,Aberystwyth, 
Ceredigion, 
SY23 3FL, UK, (vlk@aber.ac.uk)}
\quad           and \quad Anna Rozanova-Pierrat \footnote{Laboratory  Math\'ematiques et Informatique pour la Complexit\'e et les Syst\`emes, CentraleSup\'elec, Univ\'ersit\'e Paris-Saclay, Campus de Gif-sur-Yvette, Plateau de Moulon, 
3 rue Joliot Curie, 91190 Gif-sur-Yvette, France, 
(anna.rozanova-pierrat@centralesupelec.fr), https://www.saroan.fr/anna/cv/}}

\maketitle

\begin{abstract}
 We relate together different models of non linear acoustic in thermo-elastic media as the Kuznetsov equation, the Westervelt equation, the Khokhlov-Zabolotskaya-Kuznetsov (KZK) equation and the Nonlinear Progressive wave Equation (NPE) and  estimate the time during which the solutions of these models keep closed in the $L^2$ norm.  The KZK and NPE equations are  considered as paraxial approximations of the Kuznetsov equation. The Westervelt equation is obtained as a nonlinear approximation of the Kuznetsov equation. Aiming to compare the solutions of the exact and approximated systems in found approximation domains the
 well-posedness results (for the Kuznetsov equation in a half-space with periodic in time initial and boundary data)  are obtained.
\end{abstract}

\section{Introduction.}\label{intro}

One of the most general model to describe an
acoustic wave propagation in an homogeneous thermo-elastic medium is the compressible Navier-Stokes system in $\mathbb{R}^n$
\begin{align}
&\partial_t \rho+ \Div(\rho \mathbf{v})=0, \label{N-S1}\\ 
&\rho[\partial_t \mathbf{v}+(\mathbf{v}.\nabla) \mathbf{v}]=-\nabla p+\eta \Delta \mathbf{v}+\left(\zeta+\frac{\eta}{3} \right)\nabla.\Div (\mathbf{v}), \\
&\rho T [\partial_t S+(\mathbf{v}.\nabla)S]=\kappa \Delta T+\zeta (\Div \mathbf{v})^2 \nonumber\\
&+\frac{\eta}{2}\left( \partial_{x_k} v_i+\partial_{x_i}v_k-\frac{2}{3}\delta_{ik}\partial_{x_i} v_i\right)^2,\label{N-S3}\\
&p=p(\rho,S),\label{N-S4}
\end{align}
where
the pressure $p$ is given by the state law $p=p(\rho,S)$. The density $\rho$, the velocity $\mathbf{v}$, the temperature $T$ and the entropy $S$ are unknown functions in  system~(\ref{N-S1})--(\ref{N-S4}). The coefficients $\zeta,\;\kappa$ and $\eta$ are constant viscosity coefficients. For the acoustical framework the wave motion is supposed to be potential and the viscosity coefficients are supposed to be small in terms of a dimensionless small parameter $\eps>0$, which also characterizes  the size of the perturbations near
the constant state $(\rho_0, 0, S_0, T_0)$. Here  the velocity $\mathbf{v}_0$ is taken equal to $0$ just using a Galilean transformation.

Actually, $\eps$ is the Mach number, which is supposed to be small~\cite{Bakhvalov} ($\epsilon=10^{-5}$ for the propagation in water with an initial
power of the order of $0.3 \, \mathrm{W} /\mathrm{cm}^2$):
\begin{equation*}
  \dfrac{\rho-\rho_0}{\rho_0}\sim\dfrac{T-T_0}{T_0}\sim\dfrac{|\mathbf{v}|}{c}\sim\epsilon,
 \end{equation*}
where  $c = \sqrt{p' (\rho_0)}$  is the  speed  of  sound  in  the  unperturbed  media.

Hence as in~\cite{Dekkers,Roz1}, system~(\ref{N-S1})--(\ref{N-S4}) becomes an isentropic Navier-Stokes system
\begin{gather}
\partial_t \rho_\varepsilon  +\operatorname{div}( \rho_\varepsilon  \mathbf{v}_\varepsilon)=0\,,\label{NSi1}\\
 \rho_\varepsilon[\partial_t
\mathbf{v}_\varepsilon+(\mathbf{v}_\varepsilon \cdot\nabla) \,
\mathbf{v}_\varepsilon] = -\nabla p(\rho_\varepsilon)
+\varepsilon \nu \Delta \mathbf{v}_\varepsilon\,,\label{NSi2}
\end{gather}
with the approximate state equation $p(\rho, S)=p(\rho_\eps)+O(\eps^3)$:
\begin{equation}\label{press}
p(\rho_{\varepsilon})
=p_0+c^2(\rho_{\varepsilon}-\rho_0)+\frac{(\gamma-1)c^2}{2\rho_0} (\rho_{\varepsilon}-\rho_0)^2,
\end{equation}
where $\gamma=C_p/C_V$ denotes the ratio of the heat capacities at constant pressure and at constant volume respectively
and with a small enough and positive viscosity coefficient:
$$\varepsilon \nu=\beta+\kappa \left(\frac{1}{C_V}-\frac{1}{C_p}\right).$$
If we go on physical assumptions of the wave motion~\cite{Bakhvalov,Hamilton,Kuznetsov,Westervelt} for the perturbations of the density or of the velocity or of the pressure, the isentropic system~(\ref{NSi1})--(\ref{NSi2}) gives
\begin{enumerate}
 \item the Westervelt equation for the potential of the velocity, derived initially by Westervelt~\cite{Westervelt} and later by other authors~\cite{Aanonsen,TJO}:
 \begin{equation}\label{West}
 \partial_t^2\Pi-c^2\Delta \Pi=\varepsilon
\partial_t\left(\frac{\nu}{\rho_0}\Delta \Pi+\frac{\gamma+1}{2c^2} (\partial_t
\Pi)^2\right)
\end{equation}
with the same constants introduced for the Navier-Stokes system.
 \item the Kuznetsov equation also for the potential of the velocity, firstly introduced by Kuznetsov~\cite{Kuznetsov} for the velocity potential, see also Refs.~\cite{Hamilton,Jordan,Barbara,Lesser} for other different methods of its derivation:
 \begin{equation}\label{KuzEq}
  \partial^2_t u -c^2 \triangle u=\varepsilon\partial_t\left( (\nabla u)^2+\frac{\gamma-1}{2c^2}(\partial_t u)^2+\frac{\nu}{\rho_0}\Delta u\right).
 \end{equation}
 \item the Khokhlov-Zabolotskaya-Kuznetsov (KZK)~\cite{Bakhvalov,Roz3} for the density:
 \begin{equation}\label{KZKI}
 c\partial^2_{\tau z} I -\frac{(\gamma+1)}{4\rho_0}\partial_\tau^2
 I^2-\frac{\nu}{2 c^2\rho_0}\partial^3_\tau I-\frac{c^2}2 \Delta_y
 I=0.
 \end{equation}
 \item the Nonlinear Progressive wave Equation (NPE) derived in Ref.~\cite{McDonald} also for the density:
 \begin{equation}\label{NPE2}
 \partial^2_{\tau z} \xi+\frac{(\gamma+1)c}{4\rho_0}\partial_z^2[(\xi)^2]-\frac{\nu}{2\rho_0}\partial^3_z  \xi+\frac{c}{2}\Delta_y  \xi=0.
 \end{equation}
\end{enumerate}
For higher order models as the nonlinear Jordan-Moore-Gibson-Thompson (JMGT)
equation, containing the Kuznetsov equation as a particular or a limit case,
see~\cite{Jordan-2014,Kaltenbacher1,Kaltenbacher2} and their references. In this
article we don't consider such higher order models and focus our attention on
the Kuznetsov equation considered here as the most complete
equation.

In~\cite{Dekkers} it is shown that the Kuznetsov equation comes from the Navier-Stokes or Euler system only by small perturbations, but to obtain the KZK and the NPE equations
we also need to perform in addition to the small perturbations a paraxial change of variables. In this article we derive the KZK and the NPE equations  from the Kuznetsov equation  just performing the corresponding paraxial change of variables and show that the Westervelt equation can be also viewed as  an approximation of the Kuznetsov equation by a  nonlinear perturbation.

The physical context and the physical usage of the KZK and the NPE equations are different: the
NPE equation is helpful to describe short-time pulses and a long-range
propagation, for instance, in an ocean wave-guide, where the refraction
phenomena are important~\cite{Kuperman2,Kuperman1}, while the KZK equation
typically models the  ultrasonic propagation with strong diffraction phenomena,
combining with finite amplitude effects (see~\cite{Roz3} and the references
therein).
But in the same time~\cite{Dekkers},
 there is a bijection
between the variables of these two models and they can be presented by the same type differential operator with  constant positive coefficients:
$$Lu=0, \quad L=\partial^2_{t x}-c_1\partial_x(\partial_x\cdot )^2-c_2\partial^3_x \pm c_3\Delta_y \quad \hbox{for } t\in \R^+,\; x\in\R,\; y\in \R^{n-1} .$$
Therefore, the results on the solutions of the KZK equation
from~\cite{Ito,Roz2}
are valid for the NPE equation.

The interest to study how closed are the solutions of the general model of the non-linear wave motion, described by the Kuznetsov equation, and of  simplified
models with more particular area of application (such as the KZK equation and the NPE
equation which are valid only with additional assumptions on the wave propagation
describing by the paraxial changes of variables) is naturally motivated by the questions about the accuracy of the approximations and of a comparative analysis of
the solutions of these models.

If we formally consider the differential operators in the Kuznetsov equation~(\ref{KuzEq}) and the Westervelt equation~(\ref{West}), we notice that the Westervelt equation
keeps only one of two non-linear terms of the Kuznetsov equation, producing cumulative effects in a progressive wave propagation~\cite{Aanonsen}.
The  question  how closed are the solutions of these two
models, which differ on presence  of local nonlinear term,
was also open so far.

The mentioned approximation questions are treated theoretically in this article.

Let us also notice that the Kuznetsov equation~(\ref{KuzEq}) (and also the Westervelt equation~(\ref{West})) is a non-linear wave equation with terms of different order  (the wave operator is of order $\eps^0$ and the nonlinear and viscosity terms are of  order $\eps$). But the KZK- and NPE-paraxial approximations allow to have the  approximate equations with all terms of the same order, $i.e.$ the KZK and NPE equations.
For the well posedness of the Cauchy problem for the Kuznetsov equation we
cite~\cite{Perso} and for boundary value problems in regular bounded domains
see~\cite{Kalt2,Kalt1,Meyer}.

We present the structure of the paper and its mains results in the next subsection.

\subsection{Main results}\label{IntroSub}

 To keep a physical sense of the approximation problems, we consider especially the two or three dimensional cases, $i.e.$ $\R^n$ with $n=2$ or $3$, and in the following we use the notation
 $x=(x_1,x')\in \R^n$ with one propagative axis $x_1\in \R$ and the traversal variable $x'\in \R^{n-1}$.

\renewcommand{\arraystretch}{2.0}

%

\begin{sidewaystable}[ph!]
\vspace{13cm}\centering \caption{Approximation results for models derived from the Kuznetsov equation}\label{TABLE2}
\begin{tabular}{@{} c | c | c || c || c | c | }
\cline{2-6}
    & \multicolumn{2}{| c ||}{\quad KZK \quad}
    &\quad NPE\quad
    & \multicolumn{2}{| c |}{\quad  Westervelt\quad}
\\
\cline{2-6}
    &\shortstack{periodic\\
    boundary condition\\
    problem}
    & \shortstack{initial  \\
    boundary value\\
    problem}
    & \shortstack{viscous\\
    and inviscid\\
    case}
    & viscous case
    & inviscid case
    \\
    \hline
     \multicolumn{1}{| c |}{Theorem}
     &  Theorem~\ref{AproxKuzKZK}
    &  Theorem~\ref{approxKuzKZKbis}
    &  Theorem~\ref{approxKuzNPE}
    & \multicolumn{2}{| c |}{Theorem~\ref{ApproxKuzWes}}
     \\
    \hline
    \multicolumn{1}{| c |}{Derivation}
    &\multicolumn{2}{| c ||}{\shortstack{ paraxial approximation\\
    $u=\Phi(t-\frac{x_1}{c},\varepsilon x_1,\sqrt{\varepsilon} \textbf{x}')$} }
    & \shortstack{ paraxial approximation\\
    $u=\Psi(\varepsilon t,x_1-ct,\sqrt{\varepsilon} \textbf{x}')$}
    &\multicolumn{2}{| c |}{ $\Pi=u+\frac{1}{c^2}\varepsilon u \partial_t u$}
    \\
    \hline
     \multicolumn{1}{| c |}{\shortstack{Approxi-\\mation\\
     domain}}
     & \multicolumn{2}{| c ||}{\shortstack{the half space \\$\{x_1>0,
x'\in \R^{n-1}\}$} }
& $\mathbb{T}_{x_1}\times\mathbb{R}^2$
& \multicolumn{2}{| c |}{$\mathbb{R}^n$}
 \\
    \hline
     \multicolumn{1}{| c |}{\shortstack{Approxi-\\mation \\
     order}}
     & \multicolumn{2}{| c ||}{$ O(\varepsilon)$}
     & $O(\varepsilon)$
     & \multicolumn{2}{| c |}{$O(\varepsilon^2)$ }
      \\
    \hline
    \multicolumn{1}{| c |}{Estimation}
    & \shortstack{ $\Vert I-I_{aprox}\Vert_{L^2(\mathbb{T}_t\times\mathbb{R}^{n-1})}\leq \varepsilon $\\
    $z\leq K$}
    &\shortstack{ $\Vert (u -\overline{u})_t(t)\Vert_{L^2}$\\$+ \Vert \nabla (u-\overline{u})(t)\Vert_{L^2}$\\$\leq K \varepsilon.$\\
    $t<\frac{T}{\varepsilon} $}
    & \shortstack{ $\Vert (u -\overline{u})_t(t)\Vert_{L^2}$\\$+ \Vert \nabla (u-\overline{u})(t)\Vert_{L^2}$\\
    $\leq K \varepsilon$\\
    $t<\frac{T}{\varepsilon} $}
    & \multicolumn{2}{| c |}{\shortstack{ $\Vert (u -\overline{u})_t(t)\Vert_{L^2}$\\
    $+ \Vert \nabla (u-\overline{u})(t)\Vert_{L^2}$\\
    $\leq K \varepsilon$\\
   $t<\frac{T}{\varepsilon} $ }}
    \\
    \hline
    \multicolumn{1}{| c
|}{\shortstack{Initial\\
    data\\
    regularity}}
    & \shortstack{$ I_0\in H^{s+\frac{3}{2}}(\mathbb{T}_{t}\times \mathbb{R}^{n-1}_{x'})$\\ for $s > \max(\frac{n}{2},2)$}
    & \shortstack{$ I_0\in H^{s}(\mathbb{T}_{t}\times \mathbb{R}^{n-1}_{x'})$\\
    for $\left[\frac{s}{2}\right]>\frac{n}{2}+2 $}
    &\shortstack{$ \xi _0\in H^{s+2}(\mathbb{T}_{x_1}\times \mathbb{R}^{n-1}_{x'})$\\ for $s>\frac{n}{2}+1$}
    &\shortstack{$u_0\in H^{s+3}(\mathbb{R}^n)$\\ $u_1\in H^{s+3}(\mathbb{R}^3)$\\
    for $s>\frac{n}{2}$}
    & \shortstack{$u_0\in H^{s+3}(\mathbb{R}^n)$\\ $u_1\in H^{s+2}(\mathbb{R}^3)$\\
    for $s>\frac{n}{2}$}
    \\
    \hline
    \multicolumn{1}{|c|}{\shortstack{Data\\regularity\\
    for remainder\\
    boundness}}
    & \shortstack{$ I_0\in H^{s+\frac{3}{2}}(\mathbb{T}_{t}\times \mathbb{R}^{n-1}_{x'})$\\ for $s > \max(\frac{n}{2},2)$}
    & \shortstack{$ I_0\in H^{6}(\mathbb{T}_{t}\times \mathbb{R}^{n-1}_{x'})$\\
    for $n= 2,3$,\\
    $ I_0\in H^{s}(\mathbb{T}_{t}\times \mathbb{R}^{n-1}_{x'})$\\
    for $\left[\frac{s}{2}\right]>\frac{n}{2}+1 $, $n\geq 4$}
    & \shortstack{$ \xi _0\in H^{4}(\mathbb{T}_{x_1}\times \mathbb{R}^{n-1}_{x'})$\\
    for $n=2,3$.\\
    $ \xi _0\in H^{s}(\mathbb{T}_{x_1}\times \mathbb{R}^{n-1}_{x'})$\\ for $s>\frac{n}{2}+2$, $n\geq4$.}
  &\shortstack{$u_0\in H^{s+3}(\mathbb{R}^n)$\\ $u_1\in H^{s+3}(\mathbb{R}^n)$\\
    for $s>\frac{n}{2}$}
    & \shortstack{$u_0\in H^{s+3}(\mathbb{R}^n)$\\ $u_1\in H^{s+2}(\mathbb{R}^n)$\\
    for $s>\frac{n}{2}$}
    \\
    \hline
    \end{tabular}
\end{sidewaystable}

To be able to consider the approximation of the Kuznetsov equation by the KZK equation (see Section~\ref{secKuzKZK}), we  establish (see Theorems~\ref{globwellposKuzper}
 and~\ref{ThMainWPnuPGlobHalf} in Appendix~\ref{secWPresults}) global well posedness results for the Kuznetsov equation in the half space similar to the previous framework for the KZK and the Navier-Stokes system considered in~\cite{Dekkers}.
Theorem~\ref{globwellposKuzper} corresponds to the well posedness of the
periodic in time Dirichlet boundary valued problem for the Kuznetsov equation in
the half space $\R^+\times\R^{n-1}$ (see Eq.~(\ref{kuzper})) for small enough
boundary data. In this case the boundary condition is considered as the initial
condition of the corresponding Cauchy problem in $\R^n$. The proof is based on
the maximal regularity result for the corresponding linear problem given in
Theorem~\ref{ThCelik} and on the application of a result of the nonlinear
functional analysis from~\cite[1.5.~Cor.,~p.~368]{Sukhinin}
 (see also~\cite[Thm.~4.2]{Perso}). We also applied it
to prove (see Theorem~\ref{ThMainWPnuPGlobHalf}) the well posedness for the
initial boundary valued problem for the Kuznetsov equation in the half
space~(\ref{kuzhalfinitbound}), once again combining with the maximal regularity
result for the linear problem (see the proof of Lemma~\ref{maxregstrdamphalf}).

 In Subsection~\ref{secderKuzKZK} we derive the KZK equation from the Kuznetsov equation by introducing the paraxial change of variables~(\ref{paraxpotnpe}).

 For the approximation framework for their solutions  we study two cases. The
first case  is treated in Sub-subsection~\ref{sssKKZKperbc} and it considers the
purely time periodic boundary problem in the \textit{ansatz} variables
$(z,\tau,y)$ moving with the wave, where we use the well-posedness result of
Theorem~\ref{globwellposKuzper}.  In this case the only  viscous medium can be
considered as the condition to be periodic in time is not compatible with shock
formations providing the loss of the regularity which may occur in the inviscid
medium (see~\cite[Thm.~1.3]{Roz2}). The approximation
results are formulated in Theorem~\ref{AproxKuzKZK}.

 The second case (see Sub-subsection~\ref{sssKKZKperbcIn}) studies the initial boundary-value problem for the Kuznetsov equation in the initial variables $(t,x_1,x')$ with data coming from the solution of the KZK equation, using this time the well posedness  results of Theorem~\ref{ThMainWPnuPGlobHalf}. This time we have the approximation results  for the viscous and inviscid cases (see Theorem~\ref{approxKuzKZKbis} and Remark~\ref{RemInvCKZK}).

 In Section~\ref{secKuzNPE} we establish the approximation result between the Kuznetsov equation and NPE equation  in the viscous and inviscid cases (see Theorem~\ref{approxKuzNPE}).

Finally  in Section~\ref{SecWest} we compare the solutions of the Westervelt and the Kuznetsov equations. We derive the Westervelt equation from the Kuznetsov equation by a nonlinear change of variables in Subsection~\ref{subSecWestD} and we validate the approximation in Subsection~\ref{subSecWestA} (see Theorem~\ref{ApproxKuzWes} for viscous and inviscid cases).

  We  denote by $u$ a solution of the ``exact'' problem for the Kuznetsov equation
 $Exact(u)=0$
 and by $\overline{u}$ an approximate  solution, constructed by the derivation \textit{ansatz} from a regular solution of one of the approximate models (for instance of the KZK  or of the NPE  equations), $i.e.$  $\overline{u}$ is a function which solves the Kuznetsov equation up to $\eps$ terms, denoted by $\eps R$: $$Approx(\overline{u})=Exact(\overline{u})-\eps R=0.$$
 In the approximation between the solutions of the Kuznetsov equation and of the
Westervelt equation the remainder term appears with the size $\eps^2$ (it is
natural since both models contain terms of order $\eps^0$ and $\eps$).

 We can summarize  the obtained  approximation results of the Kuznetsov equation in the following way:
 if, once again, $u$ is a solution of the Kuznetsov equation and $\overline{u}$ is a solution of the NPE or of the KZK (for the initial boundary value problem) or of the Westervelt equations found for rather closed initial data $$\|\nabla_{t,\mathbf{x}} (u(0)-\overline{u}(0))\|_{L^2(\Omega)}\le \delta\le \eps,$$
then there exist constants $K$, $C_1$, $C_2$, $C>0$ independent of $\eps$, $\delta$ and on time, such that for all $t\le \frac{C}{\eps}$ it holds
$$\|\nabla_{t,\mathbf{x}} (u-\overline{u})\|_{L^2(\Omega)}\le C_1(\eps^2t+\delta)e^{C_2\eps t}\le K\eps.$$
For a more detailed  comparison between different models we include the main points of our results to the comparative  Table~\ref{TABLE2}.

 In Table~\ref{TABLE2} the line named ``Initial data regularity'' gives the information about the regularity of the initial data for the approximate model, which ensure the same regularity of the solutions of an approximate model and of the solution of the Kuznetsov equation, taken with the same initial data $u(0)=\overline{u}(0)$, coming from the corresponding \textit{ansatz}.

 To have the remainder term $R\in C([0,T],L^2(\Omega))$ we ensure that $Exact(\overline{u}) \in C([0,T],L^2(\Omega))$, $i.e.$ we need a sufficiently regular solution $\overline{u}$. The minimal regularity of the initial data to have a such $\overline{u}$ is given in Table~\ref{TABLE2} in the last line named ``Data regularity for remainder boundness''.

 To summarize, the rest of the paper is organized as follows.
Section~\ref{secKuzKZK} considers the derivation
(Subsection~\ref{secderKuzKZK}) of the KZK equation from the Kuznetsov equation
and two types of approximation results for the solutions of the Kuznetsov
equation approximated by the solutions of the KZK equation in
Subsection~\ref{secValKuzKZK}. The approximation by the solutions of the NPE
equation is considered in Section~\ref{secKuzNPE}. Section~\ref{SecWest}
contains the derivation of the Westervelt equation and the approximation result
for the solutions of the Kuznetsov and the Westervelt equations. The
well posedness results for the Kuznetsov equation needed for the approximation
results of Section~\ref{secKuzKZK} are detailed in Appendix~\ref{secWPresults}.

\section{The Kuznetsov equation and the KZK equation.}
\label{secKuzKZK}
\subsection{Derivation of the KZK equation from the Kuznetsov equation.}\label{secderKuzKZK}
If  the velocity potential is given~\cite{Kuznetsov} by \begin{equation}\label{paraxpot}
u(x,t)=\Phi(t-x_1/c,\epsilon x_1,\sqrt{\epsilon}x')=\Phi(\tau,z,y),
\end{equation}
 we directly obtain from the Kuznetsov equation~(\ref{KuzEq})  via the paraxial change of variables
\begin{equation}\label{chvarkzk}
\tau= t-\frac{x_1}{c},\;\;\;z=\varepsilon x_1,\;\;\; y=\sqrt{\varepsilon}x',
\end{equation}
  that
\begin{align}
& \partial^2_t u -c^2 \Delta u-\varepsilon\partial_t\left( (\nabla u)^2+\frac{\gamma-1}{2c^2}(\partial_t u)^2+\frac{\nu}{\rho_0}\Delta u\right)\nonumber\\
&=\varepsilon\left[2c\partial_{\tau z}^2\Phi-\frac{\gamma+1}{2c^2}\partial_{\tau}(\partial_{\tau} \Phi)^2-\frac{\nu}{\rho_0c^2}\partial^3_{\tau}\Phi- c^2 \Delta_y \Phi\right]+\varepsilon^2 R_{Kuz-KZK}\label{KZKpoten}\end{align}
with
{\small\begin{align}
\varepsilon^2 R_{Kuz-KZK}= &\varepsilon^2 \left(-c^2 \partial^2_z \Phi+\frac{2}{c} \partial_{\tau}(\partial_{\tau}\Phi \partial_z \Phi)-\partial_{\tau}(\nabla_y\Phi)^2+\frac{2\nu}{c\rho_0}\partial^2_{\tau}\partial_z \Phi-\frac{\nu}{\rho_0}\partial_{\tau} \Delta_y \Phi\right)\nonumber\\
&+\varepsilon^3\left(-\partial_{\tau}(\partial_z\Phi)^2-\frac{\nu}{\rho_0}\partial_{\tau}\partial^2_z\Phi\right).\label{remkuzkzk}
\end{align}}Let us notice that the paraxial change of variables~(\ref{chvarkzk}) defines the axis of the propagation $x_1$ along which the wave changes its profile much slower than along the transversal axis $x'$. This
is typical for the propagation of ultrasound waves.

Therefore,
we find that the right-hand side $\epsilon$-order terms in Eq.~(\ref{KZKpoten}) is exactly the KZK equation~(\ref{KZKI}).
Thanks to~\cite[Thms.~1.1--1.3]{Roz2} we have the  well posedness result for the KZK equation in the half space with periodic boundary conditions of a period $L$ on $\tau$ and of mean value zero.

Due to the well posedness domain $(\mathbb{T}_\tau\times \mathbb{R}^{n-1})$ of the KZK equation, to validate the approximation between the solutions of the KZK and the Kuznetsov equations, we need to have the well posedness of the Kuznetsov equation on the half space with boundary conditions coming from the initial condition for the KZK equation. For these well posedness results see Appendix~\ref{secWPresults}.

\subsection{Approximation of the solutions of the Kuznetsov equation by the solutions of the KZK equation.}\label{secValKuzKZK}
Let us  consider  the Cauchy problem associated  with the KZK
equation
\begin{equation}\label{NPEcau2}
 \left\lbrace
 \begin{array}{c}
 c\partial_{ z} I -\frac{(\gamma+1)}{4\rho_0}\partial_\tau
 I^2-\frac{\nu}{2 c^2\rho_0}\partial^2_\tau I-\frac{c^2}2 \partial_{\tau}^{-1}\Delta_y
 I=0\hbox{ on }\mathbb{T}_{\tau}\times\mathbb{R}_+\times\mathbb{R}^{n-1},\\
 I(\tau,0,y)=I_0(\tau,y)\hbox{ on }\mathbb{T}_{\tau}\times\mathbb{R}^{n-1},
 \end{array}\right.
 \end{equation}
for small enough initial data in order to have
by~\cite[Thm.~1.2]{Roz2} a time periodic solution $I$ defined on
$\mathbb{R}_+\times\mathbb{R}^{n-1}$. As it was  mentioned in
Introduction~\ref{IntroSub}, if $\nu>0$, to compare the solutions of the
Kuznetsov and the KZK equations we consider two cases.  The first case   (see
Sub-subsection~\ref{sssKKZKperbc}) consists in studies of the time periodic
boundary problem for the Kuznetsov equation~(\ref{kuzper}) with the boundary
condition imposed by the initial condition $I_0$ of  the KZK equation.
In Sub-subsection~\ref{sssKKZKperbcIn} we study the second case, when  the
solution of the KZK equation, taken for $\tau=0$, gives $I(0,z,y) $ defined on
$\mathbb{R}_+\times\mathbb{R}^{n-1}$, from  which we deduce,
according to the derivation \textit{ansatz}, both an initial condition for the
Kuznetsov equation at $t=0$ and a corresponding boundary condition.
In this second situation, it also makes sense to consider the inviscid case, briefly commented in the end of
Sub-subsection~\ref{sssKKZKperbcIn}.
\subsubsection{Approximation problem for the Kuznetsov with periodic boundary conditions.}\label{sssKKZKperbc}

By~\cite[Thm.~1.2]{Roz2}   there is a unique solution
$I(\tau,z,y)$ of the Cauchy problem for the KZK equation~(\ref{NPEcau2}) such
that
\begin{equation}\label{regsolKZK}
z\mapsto I(\tau,z,y) \in C([0,\infty[,H^s(\Omega_1))
\end{equation}
with $\int_{\mathbb{T}_{\tau}} I(l,z,y) dl=0$ and $\Omega_1=\mathbb{T}_{\tau}\times \mathbb{R}^{n-1}$, where $\mathbb{T}_{\tau}$ represents the periodicity in $\tau$ of period $L$.
 The operator $\partial_\tau^{-1}$ is  defined by
\begin{equation}\label{invdtau}
 \partial_{\tau}^{-1} I(\tau,z,y):=\int_0^{\tau} I(\ell,z,y) d\ell+\int_0^L \frac{\ell}{L}I(\ell,z,y)d\ell.
 \end{equation}
Formula~(\ref{invdtau}), which implies that $ \partial_{\tau}^{-1} I$ is $L$-periodic in $\tau$ and of mean value zero, gives us the estimate
\begin{equation}\label{EqEstimateAdrien}
 \Vert \partial_{\tau}^{-1} I\Vert_{H^s(\Omega_1)} \leq C \Vert
\partial_{\tau} \partial_{\tau}^{-1} I\Vert_{H^s(\Omega_1)} = C \Vert
I\Vert_{H^s(\Omega_1)}.
\end{equation}
So $\partial_{\tau}^{-1} I\vert_{z=0}\in H^s(\Omega_1)$, and hence
by~(\ref{regsolKZK})
$$
z\mapsto \partial_{\tau}^{-1} I(\tau,z,y) \in C([0,\infty[,H^s(\Omega_1)),
$$
with $\int_{\mathbb{T}_{\tau}}\partial_{\tau}^{-1} I(s,z,y) ds=0$.

We define on $\mathbb{T}_t\times \mathbb{R}_+\times \mathbb{R}^{n-1}$ 
\begin{equation}\label{potentialkzk}
\overline{u}(t,x_1,x'):=\frac{c^2}{\rho_0}\partial_{\tau}^{-1} I(\tau,z,y)=\frac{c^2}{\rho_0}\partial_{\tau}^{-1} I\left(t-\frac{x_1}{c},\varepsilon x_1,\sqrt{\varepsilon}x'\right)
\end{equation}
 with the paraxial change of variable~(\ref{chvarkzk}) associated
 with the KZK equation. Thus $\overline{u}$ is $L$-periodic in time
and of mean value zero.
Now we consider the following Kuznetsov problem
\begin{equation}\label{kuzper}
\left\lbrace
\begin{array}{l}
u_{tt}-c^2 \Delta u-\nu \varepsilon \Delta u_t=\alpha \varepsilon u_t\; u_{tt}+\beta \varepsilon \nabla u\; \nabla u_t\;\;\;\;\hbox{ on }\; \mathbb{T}_t\times \mathbb{R}_+\times\mathbb{R}^{n-1},\\
u\vert_{x_1=0}=g\;\;\;\;\hbox{ on }\; \mathbb{T}_t\times \mathbb{R}^{n-1},
\end{array}\right.
\end{equation}
in which  the boundary condition is imposed by the initial condition for the KZK equation:
\begin{equation}\label{boundcondkuz}
g(t,x'):=\overline{u}(t,0,x')=\frac{c^2}{\rho_0}\partial_{\tau}^{-1} I_0(\tau,y).
\end{equation}
Let us define (see Eq.~(\ref{paraxpot}),
  and subsection~4.1 in~\cite{Dekkers} for more details)
\begin{equation}\label{tildI}
\tilde{I}:=\frac{\rho_0}{c^2}\partial_{\tau}\Phi.
\end{equation}
 Then $\tilde{I}$ is the solution of the Kuznetsov equation written in the following form with the remainder $R_{Kuz-KZK}$ defined in Eq.~(\ref{remkuzkzk}):
\begin{equation}\label{KZKwithre}
\left\lbrace
\begin{array}{l}
c\partial_{ z} \tilde{I} -\frac{(\gamma+1)}{4\rho_0}\partial_\tau
\tilde{I}^2-\frac{\nu}{2 c^2\rho_0}\partial^2_\tau \tilde{I}-\frac{c^2}2 \Delta_y\partial_{\tau}^{-1}
\tilde{I}+\varepsilon \frac{\rho_0}{2c^2}R_{Kuz-KZK}=0,\\
\tilde{I}\vert_{z=0}=I_0.
\end{array}
\right.
\end{equation}
In Eq.~(\ref{KZKwithre}) we can recognize the system associated
 with the KZK equation~(\ref{KZKI}).

Now we can formulate the following approximation result between the solutions of the KZK and Kuznetsov equations.
\begin{theorem}\label{AproxKuzKZK}
Let $\nu>0$.
For $s>\max(\frac{n}{2},2)$ and $I_0\in H^{s+\frac{3}{2}}(\mathbb{T}_{\tau}\times \mathbb{R}^{n-1})$ small enough in $H^{s+\frac{3}{2}}(\mathbb{T}_{\tau}\times \mathbb{R}^{n-1}) $, there exists a unique global solution $I$ of the Cauchy problem for the KZK equation~(\ref{NPEcau2}) such that
$$z\mapsto I(\tau,z,y) \in C([0,\infty[,H^{s+\frac{3}{2}}(\mathbb{T}_{\tau}\times \mathbb{R}^{n-1})).$$
In addition, there exists a unique global solution $\tilde{I}$ of the Kuznetsov problem~(\ref{KZKwithre}), in the sense
$\tilde{I}:=\frac{\rho_0}{c^2}\partial_{\tau}\Phi,$
with $\Phi(\tau,z,y):=u(t,x_1,x')$ with the paraxial change of variable~(\ref{chvarkzk}) and
$$u\in H^2(\mathbb{T}_t;H^s(\mathbb{R}^+\times\mathbb{R}^{n-1}))\cap H^1(\mathbb{T}_t;H^{s+2}(\mathbb{R}^+\times\mathbb{R}^{n-1})),$$
is the global solution of the periodic problem~(\ref{kuzper}) for the Kuznetsov equation with $g$ defined by $I_0$ as in Eq.~(\ref{boundcondkuz}). Moreover there exist $C_1$,  $C_2>0$ such that
$$\frac{1}{2}\frac{d}{dz}\Vert I-\tilde{I}\Vert^2_{L^{2}(\mathbb{T}_\tau\times \mathbb{R}^{n-1})}\leq C_1 \Vert I-\tilde{I}\Vert^2_{L^{2}(\mathbb{T}_\tau\times \mathbb{R}^{n-1})}+C_2\varepsilon \Vert I-\tilde{I}\Vert_{L^{2}(\mathbb{T}_\tau\times \mathbb{R}^{n-1})},$$
which implies
$$\Vert I-\tilde{I}\Vert_{L^{2}(\mathbb{T}_\tau\times \mathbb{R}^{n-1})}(z)
\leq \frac{C_2}{2}\eps z e^{\frac{C_1}{2} z}
\leq \frac{C_2}{C_1}\varepsilon(e^{\frac{C_1}{2}z}-1)$$
and $$\Vert I-\tilde{I}\Vert_{L^{2}(\mathbb{T}_\tau\times \mathbb{R}^{n-1})}(z)\leq K\varepsilon \hbox{ while } z\leq C$$ with $K>0$ and $C>0$ independent of $\varepsilon$.
\end{theorem}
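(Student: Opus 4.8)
The plan is to run a first-order energy estimate in the marching variable $z$, in the space $L^2(\mathbb{T}_\tau\times\R^{n-1})$, for the difference $w:=I-\tilde I$. First I would record the two inputs that feed the comparison: the global regular KZK solution $I\in C([0,\infty[,H^{s+\frac32}(\mathbb{T}_\tau\times\R^{n-1}))$ is supplied by \cite[Thm.~1.2]{Roz2} for small $I_0$, while the global Kuznetsov solution $u$ on the half-space with the periodic-in-time boundary datum $g$ of~(\ref{boundcondkuz}) (hence $\tilde I=\frac{\rho_0}{c^2}\partial_\tau\Phi$) is furnished by Theorem~\ref{globwellposKuzper}, once one checks that $g=\frac{c^2}{\rho_0}\partial_\tau^{-1}I_0$ is small and regular enough in the trace space, which is where estimate~(\ref{EqEstimateAdrien}) on $\partial_\tau^{-1}$ enters. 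By construction $\tilde I$ solves the KZK system with remainder~(\ref{KZKwithre}), so subtracting it from~(\ref{NPEcau2}) gives a closed evolution equation for $w$ with the crucial property $w\vert_{z=0}=I_0-I_0=0$ since both data coincide.

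Next I would multiply the $w$-equation by $w$ and integrate over $\mathbb{T}_\tau\times\R^{n-1}$, so that the marching term yields $\frac{c}{2}\frac{d}{dz}\Vert w\Vert_{L^2}^2$, and treat the four remaining terms in turn. Writing $I^2-\tilde I^2=(I+\tilde I)w$ and integrating by parts once in $\tau$, the nonlinear term reduces to $\frac12\int\partial_\tau(I+\tilde I)\,w^2$, bounded by $\frac12\Vert\partial_\tau(I+\tilde I)\Vert_{L^\infty}\Vert w\Vert_{L^2}^2$; the uniform-in-$z$ $L^\infty$ control comes from the embedding $H^{s+\frac32}\hookrightarrow W^{1,\infty}$ (valid for $s>\frac n2$), and this produces the $C_1\Vert w\Vert^2$ contribution. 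The viscous term $-\frac{\nu}{2c^2\rho_0}\int w\,\partial_\tau^2 w=-\frac{\nu}{2c^2\rho_0}\int(\partial_\tau w)^2\le 0$ has the favourable sign and is discarded. The key structural point is the diffraction term: integrating by parts in $y$ and using that $\partial_\tau^{-1}$ is a purely imaginary Fourier multiplier, hence skew-adjoint on mean-zero $L$-periodic functions, one finds $\int w\,\partial_\tau^{-1}\Delta_y w=-\int\nabla_y w\cdot\partial_\tau^{-1}\nabla_y w=0$, so this term cancels exactly, which is precisely what lets the $L^2$ estimate close. Finally the remainder gives $\varepsilon\frac{\rho_0}{2c^2}\int wR_{Kuz-KZK}\le C_2\varepsilon\Vert w\Vert_{L^2}$ by Cauchy--Schwarz, provided $\Vert R_{Kuz-KZK}\Vert_{L^2}$ is bounded uniformly in $z$.

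Collecting these bounds produces exactly the stated differential inequality $\frac12\frac{d}{dz}\Vert w\Vert^2\le C_1\Vert w\Vert^2+C_2\varepsilon\Vert w\Vert$. Writing $\phi=\Vert w\Vert_{L^2}$, this reads $\phi\,\phi'\le C_1\phi^2+C_2\varepsilon\phi$; dividing by $\phi$ (regularizing by $\sqrt{\phi^2+\eta^2}$ to avoid its zero set) reduces it to a scalar linear differential inequality with $\phi(0)=0$, and an elementary integration via the integrating factor yields the two bounds $\Vert I-\tilde I\Vert_{L^2}(z)\le\frac{C_2}{2}\varepsilon z\,e^{\frac{C_1}{2}z}\le\frac{C_2}{C_1}\varepsilon(e^{\frac{C_1}{2}z}-1)$ displayed in the statement, hence $\Vert I-\tilde I\Vert_{L^2}\le K\varepsilon$ on any fixed range $z\le C$.

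I expect the main obstacle to be not the energy estimate itself but the uniform-in-$z$ bound $\Vert R_{Kuz-KZK}\Vert_{L^2}\le C$. From~(\ref{remkuzkzk}) the remainder involves up to third-order derivatives of $\Phi$ (terms such as $\partial_\tau^2\partial_z\Phi$ and $\partial_\tau\Delta_y\Phi$, together with the quadratic $\partial_\tau(\nabla_y\Phi)^2$), so controlling it in $L^2$ globally is what forces the extra $\frac32$ derivatives in the hypothesis $I_0\in H^{s+\frac32}$ and relies on the global regularity of the Kuznetsov solution from Theorem~\ref{globwellposKuzper}. Carefully propagating this regularity from $I_0$ through the ansatz $\tilde I=\frac{\rho_0}{c^2}\partial_\tau\Phi$, and verifying that $C_1$ and $C_2$ are genuinely independent of $\varepsilon$ and $z$, is the delicate bookkeeping the proof must carry out.
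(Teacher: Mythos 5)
Your proposal follows the paper's own route step for step: the same well-posedness inputs (the KZK solution $I$ from \cite[Thm.~1.2]{Roz2}, the Kuznetsov solution $u$ from Theorem~\ref{globwellposKuzper} after checking the trace regularity of $g$ via estimate~(\ref{EqEstimateAdrien})), the same subtraction of~(\ref{NPEcau2}) and~(\ref{KZKwithre}) with $w\vert_{z=0}=0$, the same $L^2$-in-$(\tau,y)$ energy estimate in $z$ with the viscous term discarded by sign and the diffraction term killed by skew-adjointness of $\partial_\tau^{-1}$ on mean-zero periodic functions, the remainder handled by Cauchy--Schwarz plus a $z$-uniform $L^2$ bound, and Gronwall at the end. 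The one place where your argument genuinely breaks is the bound on the nonlinear term.

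You estimate $\frac12\int\partial_\tau(I+\tilde I)\,w^2$ by $\frac12\Vert\partial_\tau(I+\tilde I)\Vert_{L^\infty}\Vert w\Vert^2_{L^2}$ and justify the uniform-in-$z$ $L^\infty$ control by the embedding $H^{s+\frac32}\hookrightarrow W^{1,\infty}$. That embedding argument applies only to $I$, which indeed lies in $C([0,\infty[;H^{s+\frac32}(\mathbb{T}_\tau\times\R^{n-1}))$ with a $z$-uniform bound. It does not apply to $\tilde I$: this function has only the regularity inherited from the Kuznetsov solution $u\in H^2(\mathbb{T}_t;H^s(\Omega))\cap H^1(\mathbb{T}_t;H^{s+2}(\Omega))$, so $\partial_\tau\tilde I=\frac{\rho_0}{c^2}\partial^2_\tau\Phi$ corresponds to $\partial_t^2u\in L^2(\mathbb{T}_t;H^s(\Omega))$, i.e.\ it is square integrable in time but not known to be uniformly bounded, and nothing puts $\tilde I$ in $C(\lbrace z>0\rbrace;H^{s+\frac32})$. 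Hence the constant $C_1$ you produce is not known to be finite. The repair is one line, and it is precisely the paper's manoeuvre: since $w=I-\tilde I$ is $L$-periodic in $\tau$ and $\tilde I=I-w$,
\[
\int_{\mathbb{T}_\tau}\partial_\tau w\,w^2\,{\rm d}\tau=\frac13\int_{\mathbb{T}_\tau}\partial_\tau\bigl(w^3\bigr)\,{\rm d}\tau=0
\quad\Longrightarrow\quad
\int\partial_\tau(I+\tilde I)\,w^2\,{\rm d}\tau\,{\rm d}y =2\int\partial_\tau I\,w^2\,{\rm d}\tau\,{\rm d}y
\]
(the paper writes this as $\int\tilde I\,w\,\partial_\tau w=-\frac12\int\partial_\tau I\,w^2$), so the nonlinear contribution is controlled by $\sup_{(\tau,y)}\vert\partial_\tau I(\tau,z,y)\vert\,\Vert w\Vert^2_{L^2}$, which requires only the KZK regularity of $I$. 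With this substitution your differential inequality, and hence the Gronwall conclusion, closes exactly as in the paper.
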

\begin{proof}
For $s> \max(\frac{n}{2},2)$, the global well-posedness of $I$ comes
from~\cite[Thm.~1.2]{Roz2} if $I_0\in
H^{s+\frac{3}{2}}(\mathbb{T}_{\tau}\times \mathbb{R}^{n-1})$ is small enough.
Moreover, since $g$ is given by Eq.~(\ref{boundcondkuz}), thanks to the
definition  of $\partial_{\tau}^{-1}$ in~(\ref{invdtau}) and the fact that
$I_0\in H^{s+\frac{3}{2}}(\mathbb{T}_{\tau}\times \mathbb{R}^{n-1})$, we have
$$g\in H^{s+\frac{3}{2}}(\mathbb{T}_t\times \mathbb{R}^{n-1}) \hbox{ and }\partial_t g\in H^{s+\frac{3}{2}}(\mathbb{T}_t\times \mathbb{R}^{n-1}).$$
And thus
$$g\in H^{\frac{7}{4}}(\mathbb{T}_t;H^s(\mathbb{R}^{n-1}))\cap H^1(\mathbb{T}_t;H^{s+2-\frac{1}{2}}(\mathbb{R}^{n-1})).$$
Therefore we can use Theorem~\ref{globwellposKuzper}, which implies the global existence of the periodic in time solution
\begin{equation}\label{EqUTh21}
 u\in H^2(\mathbb{T}_t;H^s(\mathbb{R}^+\times\mathbb{R}^{n-1}))\cap H^1(\mathbb{T}_t;H^{s+2}(\mathbb{R}^+\times\mathbb{R}^{n-1})),
 \end{equation}
 of the Kuznetsov periodic boundary value problem~(\ref{kuzper}) as $I_0$ is small enough in $H^{s+\frac{3}{2}}(\mathbb{T}_{\tau}\times \mathbb{R}^{n-1})$.
Therefore, it also implies the global existence of $\tilde{I}$, defined in~(\ref{tildI}), which is the solution of the exact Kuznetsov system~(\ref{KZKwithre}).

Now we subtract the equations in  systems~(\ref{NPEcau2}) and~(\ref{KZKwithre}) to obtain
\begin{align*}
c\partial_z (I-\tilde{I})-\frac{\gamma+1}{2\rho_0}(I-\tilde{I})\partial_{\tau} I -\frac{\gamma+1}{2\rho_0} \tilde{I}&\partial_{\tau}(I-\tilde{I}) -\frac{\nu}{2c^2\rho_0}\partial^2_{\tau}(I-\tilde{I})\\
&-\frac{c^2}{2}\partial_{\tau}^{-1} \Delta_y(I-\tilde{I})=\varepsilon\frac{\rho_0}{2c^2}R_{Kuz-KZK}.
\end{align*}
Denoting $\Omega_1=\mathbb{T}_{\tau}\times \mathbb{R}^{n-1}$, we multiply this equation by $(I-\tilde{I})$, integrate over $\mathbb{T}_{\tau}\times \mathbb{R}^{n-1}$ and perform a standard integration by parts, which gives
\begin{align*}
&\frac{c}{2}\frac{d}{dz}\Vert I-\tilde{I}\Vert_{L^2(\Omega_1)}^2-\frac{\gamma+1}{2\rho_0}\int_{\Omega_1}\partial_{\tau} I( I-\tilde{I})^2d\tau dy\\
&-\frac{\gamma+1}{2\rho_0} \int_{\Omega_1} \tilde{I} (I-\tilde{I}) \partial_{\tau}(I-\tilde{I})d\tau dy\\
&+\frac{\nu}{2c^2\rho_0}\int_{\Omega_1} (\partial_{\tau}(I-\tilde{I}))^2d\tau dy=\varepsilon\frac{\rho_0}{2c^2}\int_{\Omega_1} R_{Kuz-KZK} (I-\tilde{I})d\tau dy.
\end{align*}
Let us notice that
\begin{align*}
&\int_{\Omega_1} \tilde{I} (I-\tilde{I}) \partial_{\tau}(I-\tilde{I})d\tau dy= \int_{\Omega_1} [(\tilde{I}-I)+I)]\frac{1}{2}\partial_{\tau}(I-\tilde{I})^2 d\tau dy=\\
&=-\frac{1}{2}\int_{\Omega_1} \partial_{\tau}I (I-\tilde{I})^2 d\tau dy.
\end{align*}
By~(\ref{EqUTh21}) with $s> \max(\frac{n}{2},2)$, $u$ is sufficiently regular to ensure
\begin{equation}\label{remboundkuzkzk}
R_{Kuz-KZK}\in C(\mathbb{R}_+;L^2(\mathbb{T}_{\tau}\times \mathbb{R}^{n-1})).
\end{equation}
This comes from the fact that in system~(\ref{KZKwithre}) the ``worst'' term,
asking the most   regularity of $\Phi$, inside the remainder
$R_{Kuz-KZK}$ (see Eq.~(\ref{remkuzkzk})) is $\partial_{\tau}\partial^2_z\Phi$
with $\tilde{I}$ given by
Eq.~(\ref{tildI}). As
$\partial_t^3 u\in L^2(\mathbb{T}_t;H^{s-2}(\Omega)),$
we need to take $s> \max(\frac{n}{2},2)$ to have $\partial_{\tau}\partial^2_z\Phi$ in $L^{\infty}(\mathbb{R}_+;L^2(\mathbb{T}_{\tau}\times \mathbb{R}^{n-1}))$.
Therefore, it holds
$$\left\vert \int_{\Omega_1} R_{Kuz-KZK} (I-\tilde{I})d\tau dy\right\vert\leq \Vert R_{Kuz-KZK}\Vert_{L^2(\Omega_1)} \Vert I-\tilde{I}\Vert_{L^2(\Omega_1)}\leq C \Vert I-\tilde{I}\Vert_{L^2(\Omega_1)}$$
with a constant $C>0$ independent of $z$ thanks to~(\ref{remboundkuzkzk}). It leads to the estimate
$$\frac{1}{2}\frac{d}{dz}\Vert I-\tilde{I}\Vert_{L^2(\Omega_1)}^2\leq K \sup_{(\tau,y)\in\Omega_1}\vert \partial_{\tau}I(\tau,z,y)\vert \;\;\Vert I-\tilde{I}\Vert_{L^2(\Omega_1)}^2+C\varepsilon  \Vert I-\tilde{I}\Vert_{L^2(\Omega_1)},$$
in which, due to the regularity of $I$ for $s$ and $I_0$ (see~\cite{Roz2})
the term $$\sup_{(\tau,y)\in\Omega_1}\vert \partial_{\tau}I(\tau,z,y)\vert$$ is bounded by a constant $C>0$ independent of $z$. Consequently, we have the desired estimate and the other results follow from Gronwall's Lemma.
\end{proof}
\begin{remark}
The regularity $I_0\in H^{s+\frac{3}{2}}(\mathbb{T}_{\tau}\times\mathbb{R}^{n-1})$ for $s>\max(\frac{n}{2},2)$, imposed in Theorem~\ref{AproxKuzKZK}, is the minimal regularity to ensure~(\ref{remboundkuzkzk}).
\end{remark}

\subsubsection{Approximation problem for the Kuznetsov equation with initial-boundary conditions.}\label{sssKKZKperbcIn}
Let  the function $I_0(t,y)=I_0(t,\sqrt{\varepsilon }x')$ be
$L$-periodic on $t$  and  such that
$$I_0 \in H^s(\mathbb{T}_{t}\times \mathbb{R}^{n-1})$$
for $s\geq \left[ \frac{n+1}{2}\right]$ and $\int_{0}^L I_0(s,y) ds=0$. Hence~\cite{Roz2}, there is a unique solution $I(\tau,z,y)$ of the Cauchy problem~(\ref{NPEcau2}) for the KZK equation satisfying~(\ref{regsolKZK}).
We define $\overline{u}$ and $g$ as in Eqs.~(\ref{potentialkzk}) and~(\ref{boundcondkuz}) respectively.
Thus, for $R_{Kuz-KZK}$ defined in Eq.~(\ref{remkuzkzk}), $\overline{u}$ is the solution of the following system
\begin{equation}\label{aproxkuzkzkrem}
\left\lbrace
\begin{array}{l}
\partial^2_t \overline{u} -c^2 \Delta \overline{u}-\varepsilon\partial_t\left( (\nabla \overline{u})^2+\frac{\gamma-1}{2c^2}(\partial_t \overline{u})^2+\frac{\nu}{\rho_0}\Delta \overline{u}\right)=\varepsilon^2 R_{Kuz-KZK}\;\;\;\hbox{ in }\;\mathbb{T}_t\times \Omega,\\
\overline{u}=g\;\;\;\hbox{ on }\; \mathbb{T}_t\times\partial\Omega.
\end{array}\right.
\end{equation}

 In the same time let us consider for a sufficiently large $T>0$ the solution $u$ (see Theorem~\ref{ThMainWPnuPGlobHalf} for its global existence and uniqueness) of the Dirichlet boundary-value problem for the Kuznetsov equation
\begin{equation}\label{kuzhalfinitbound}
 \left\lbrace
 \begin{array}{l}
 u_{tt}-c^2\Delta u-\nu\varepsilon \Delta u_t=\alpha\varepsilon u_t u_{tt}+\beta \varepsilon \nabla u \nabla u_t\;\;\;\hbox{ in }\;[0,+\infty[\times \Omega,\\
 u=g\;\;\;\hbox{ on }\; [0,\infty[\times\partial\Omega,\\u(0)=u_0, \;\;\;u_t(0)=u_1\;\;\;\hbox{ in }\; \Omega,
\end{array}
\right.
\end{equation}
taking $u_0:=\overline{u}(0)$ and $u_1:=\overline{u}_t(0)$ and considering the time periodic function $g$ defined by Eq.~(\ref{boundcondkuz}) as a function on $[0,T]$.

To  compare $u$ and $\overline{u}$, we obtain the following stability result:
\begin{theorem}\label{approxKuzKZKbis}
Let $T$, $\nu>0$, $n\geq 2$, $s\in \R^+$,
$\Omega=\mathbb{R}^+\times\mathbb{R}^{n-1}$ and $I_0\in
H^{s}(\mathbb{T}_{t}\times\mathbb{R}^{n-1})$.

Then,  the following statements are
valid.
\begin{enumerate}
\item If $s\geq 6$ for $n=2$ and $3$, or else
$\left[\frac{s}{2}\right]>\frac{n}{2}+1$,   there exists  a constant
$C_0>0$ such that $\Vert
I_0\Vert_{H^{s}}< C_0$
implies the global well-posedness of the Cauchy problem
for the KZK equation with the following regularity:
$$\hbox{ for } 0\leq k\leq \left[\frac{s}{2}\right] \quad I\in C^k(\lbrace z >0\rbrace;H^{s-2k}(\mathbb{T}_{\tau}\times \mathbb{R}^{n-1})).$$
Moreover it implies the well-posedness of~(\ref{aproxkuzkzkrem}) with
$$
\overline{u}\in  C^k(\lbrace z >0\rbrace;H^{s-2k}(\mathbb{T}_{\tau}\times \mathbb{R}^{n-1})),\;
\partial_t\overline{u}\in  C^k(\lbrace z >0\rbrace;H^{s-2k}(\mathbb{T}_{\tau}\times \mathbb{R}^{n-1})),$$
or again
\begin{equation}\label{regubarkuzkzk}
\overline{u}\in H^2(\mathbb{T}_t,H^{\left[\frac{s}{2} \right]-1}(\Omega))\cap H^1(\mathbb{T}_t,H^{\left[\frac{s}{2} \right]}(\Omega)) .
\end{equation}
The  imposed regularity of $I_0$
(see Table~\ref{TABLE2}) is minimal to ensure  that $R_{Kuz-KZK}$ (see
Eq.~(\ref{remkuzkzk}) for the definition) is in
$C([0,+\infty[;L^2(\mathbb{R}_+\times\mathbb{R}^{n-1}))$.
 \item If $\left[\frac{s}{2}\right]>\frac{n}{2}+2$, taking the same initial data for the exact boundary-value problem for the Kuznetsov equation~(\ref{kuzhalfinitbound}) as for $\overline{u}$, $i.e.$   \begin{align*}
   &u(0)=\overline{u}(0)=\frac{c^2}{\rho_0}\partial_{\tau}^{-1}I(-\frac{x_1}{c},\varepsilon x_1,\sqrt{\varepsilon}x')\in H^{\left[\frac{s}{2} \right]}(\Omega),    \\
   &u_t(0)=\overline{u}_t(0)=\frac{c^2}{\rho_0}\partial_{\tau}I(-\frac{x_1}{c},\varepsilon x_1,\sqrt{\varepsilon}x')\in H^{\left[\frac{s}{2} \right]-1}(\Omega),
   \end{align*}
  there exists $ C_0>0$  such that $\Vert
I_0\Vert_{H^{s}}< C_0$ implies the
well-posedness of the exact Kuznetsov equation~(\ref{kuzhalfinitbound})
 supplemented with the Dirichlet boundary condition
 \begin{align*}
 g= \frac{c^2}{\rho_0}\partial_{\tau}^{-1}I_0\in H^s(\mathbb{T}_t\times\mathbb{R}^{n-1})\subset H^{7/4}(]0,T[&;H^{\left[\frac{s}{2} \right]-2}(\partial\Omega))\\
& \cap H^1(]0,T[;H^{\left[\frac{s}{2} \right]-2+3/2}(\partial\Omega))
 \end{align*}
  ensuring the regularity
\begin{equation}\label{regukuzkzk}
u\in H^2(]0,T[,H^{\left[\frac{s}{2} \right]-1}(\Omega))\cap H^1(]0,T[,H^{\left[\frac{s}{2} \right]}(\Omega)) .
\end{equation}
Moreover, there exist constants $K$, $C$, $C_1$, $C_2>0$, all independent of $\varepsilon$, such that for all $ t\leq\frac{C}{\varepsilon}$
\begin{equation}\label{estimKuzKZKex}
 \sqrt{\Vert (u -\overline{u})_t(t)\Vert_{L^2(\Omega)}^2+ \Vert \nabla (u-\overline{u})(t)\Vert_{L^2(\Omega)}^2}\leq C_1\eps^2t e^{C_2\eps t}\leq K\varepsilon.
\end{equation}

 \item In addition, let $u$ be a solution of the Dirichlet boundary-value problem~(\ref{kuzhalfinitbound}) for the Kuznetsov equation, with $g$ defined by Eq.~(\ref{boundcondkuz}) and  with initial data
$u_0\in H^{m+2}(\Omega)$, $u_1\in H^{m+1}(\Omega)$ for $m>\frac{n}{2}$ such that
\begin{equation}\label{EqSmallInD}
 \Vert (u -\overline{u})_t(0)\Vert_{L^2(\Omega)}^2+ \Vert \nabla (u-\overline{u})(0)\Vert_{L^2(\Omega)}^2\leq \delta^2 \leq \varepsilon^2.
\end{equation}
Then there exist constants $K$, $C$, $C_1$, $C_2>0$, all independent of $\varepsilon$, such that for all $ t\leq\frac{C}{\varepsilon}$
\begin{equation}\label{estimKuzKZK}
\sqrt{\Vert (u -\overline{u})_t(t)\Vert_{L^2(\Omega)}^2+ \Vert \nabla (u-\overline{u})(t)\Vert_{L^2(\Omega)}^2}\leq  C_1(\eps^2t+\delta^2)e^{C_2\eps t}\le K\eps.
\end{equation}
\end{enumerate}
\end{theorem}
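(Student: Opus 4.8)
The plan is to split the three assertions into a well-posedness/regularity part and a quantitative comparison part, the latter resting on a single energy estimate for the difference $w:=u-\overline{u}$. For the existence statements I would invoke the quoted results directly: the global well-posedness of the KZK Cauchy problem~(\ref{NPEcau2}) together with the regularity $I\in C^k(\{z>0\};H^{s-2k}(\mathbb{T}_\tau\times\R^{n-1}))$, $0\le k\le[\frac s2]$, is~\cite{Roz2} under the smallness $\Vert I_0\Vert_{H^{s}}<C_0$; the regularity of $\overline{u}$ (hence~(\ref{regubarkuzkzk})) is transported from $I$ through the ansatz~(\ref{potentialkzk}) and the paraxial change of variables~(\ref{chvarkzk}) by the chain rule; and the global well-posedness of the exact problem~(\ref{kuzhalfinitbound}) is Theorem~\ref{ThMainWPnuPGlobHalf}, once one checks that $g=\frac{c^2}{\rho_0}\partial_{\tau}^{-1}I_0$ and the data $u_0=\overline{u}(0)$, $u_1=\overline{u}_t(0)$ lie in the trace and energy spaces required there, which is a Sobolev computation on the ansatz. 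The only non-formal point of item~1 is that the displayed regularity of $I_0$ is \emph{minimal} for $R_{Kuz-KZK}\in C([0,\infty[;L^2)$; I would argue this exactly as in the proof of Theorem~\ref{AproxKuzKZK}, noting that among the terms of~(\ref{remkuzkzk}) the most derivative-demanding is $\partial_{\tau}\partial^2_z\Phi$, which through $\Phi=\frac{c^2}{\rho_0}\partial_{\tau}^{-1}I$ equals $\frac{c^2}{\rho_0}\partial^2_z I$, and counting how many $z$- and $\tau$-derivatives of $I$ survive in $L^2$ pins down the thresholds $s\ge 6$ for $n=2,3$ and $[\frac s2]>\frac n2+1$ in general.

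The heart of the proof is~(\ref{estimKuzKZKex})--(\ref{estimKuzKZK}). Subtracting~(\ref{aproxkuzkzkrem}) from~(\ref{kuzhalfinitbound}) and expanding the nonlinearities bilinearly, $u_tu_{tt}-\overline{u}_t\overline{u}_{tt}=u_tw_{tt}+\overline{u}_{tt}w_t$ and $\nabla u\cdot\nabla u_t-\nabla\overline{u}\cdot\nabla\overline{u}_t=\nabla u\cdot\nabla w_t+\nabla w\cdot\nabla\overline{u}_t$, the difference solves a damped wave equation with source $-\eps^2R_{Kuz-KZK}$. Multiplying by $w_t$ and integrating over $\Omega$, with $w=0$ (so $w_t=0$) on $\partial\Omega$ killing the boundary terms, I obtain
\begin{align*}
\frac{d}{dt}E(t)+\nu\eps\int_\Omega|\nabla w_t|^2\,\dx
&=\eps\int_\Omega\Big(\alpha u_tw_{tt}+\alpha\overline{u}_{tt}w_t+\beta\nabla u\cdot\nabla w_t+\beta\nabla w\cdot\nabla\overline{u}_t\Big)w_t\,\dx\\
&\quad-\eps^2\int_\Omega R_{Kuz-KZK}\,w_t\,\dx,
\end{align*}
where $E(t)=\frac12\int_\Omega(w_t^2+c^2|\nabla w|^2)\,\dx$ and the dissipation sits on the left with the good sign.

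The main obstacle is $\eps\alpha\int u_tw_tw_{tt}$, which carries the second derivative $w_{tt}$ and cannot be closed in $E$ directly. I would use the standard Kuznetsov device $\int u_tw_tw_{tt}=\frac12\frac{d}{dt}\int u_tw_t^2-\frac12\int u_{tt}w_t^2$ and absorb the exact derivative into the modified energy $\tilde E:=E-\frac{\alpha\eps}2\int u_tw_t^2$. Because~(\ref{regubarkuzkzk})--(\ref{regukuzkzk}) and Sobolev embedding bound $\Vert u_t\Vert_{L^\infty}$, $\Vert u_{tt}\Vert_{L^\infty}$, $\Vert\nabla u\Vert_{L^\infty}$ and the $\overline{u}$-analogues uniformly in time (the global smallness keeping them controlled for all $t$), $\tilde E$ is equivalent to $E$ once $\eps$ is small. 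The two $\nabla w_t$ terms are absorbed into the dissipation by Young's inequality, $\beta\eps|\int\nabla u\cdot\nabla w_t\,w_t|\le\frac{\nu\eps}4\int|\nabla w_t|^2+C\eps\int w_t^2$; every remaining $O(\eps)$ term is bounded by $C\eps\tilde E$; and the source is $\le\eps^2\Vert R_{Kuz-KZK}\Vert_{L^2}\Vert w_t\Vert_{L^2}\le C\eps^2\sqrt{\tilde E}$ by item~1.

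Collecting the estimates yields $\frac{d}{dt}\tilde E\le C\eps\tilde E+C\eps^2\sqrt{\tilde E}$, so that $F:=\sqrt{\tilde E}$ obeys the linear differential inequality $F'\le C_2\eps F+C_1\eps^2$, whence Gronwall's lemma gives $F(t)\le(F(0)+C_1\eps^2t)e^{C_2\eps t}$. For item~2 the matched data force $w(0)=w_t(0)=0$, so $F(0)=0$ and $F(t)\le C_1\eps^2te^{C_2\eps t}$, which is~(\ref{estimKuzKZKex}); for item~3 the hypothesis~(\ref{EqSmallInD}) gives $F(0)\le C\delta$, producing~(\ref{estimKuzKZK}). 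Finally, restricting to $t\le C/\eps$ bounds $e^{C_2\eps t}\le e^{C_2C}$ and $\eps^2t\le C\eps$, so each right-hand side is $\le K\eps$; since $\sqrt{\Vert w_t\Vert_{L^2}^2+\Vert\nabla w\Vert_{L^2}^2}$ is comparable to $F=\sqrt{\tilde E}$, the theorem follows.
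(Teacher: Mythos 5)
Your proposal is correct and follows essentially the same route as the paper: the well-posedness parts are quoted from~\cite{Roz2} and Theorem~\ref{ThMainWPnuPGlobHalf}, and the comparison estimate is the same energy argument, since your modified energy $\tilde E=E-\frac{\alpha\eps}{2}\int_\Omega u_t w_t^2\,\dx$ is precisely the paper's weighted energy $\frac12\int_\Omega \bigl(A(t,x)w_t^2+c^2|\nabla w|^2\bigr)\dx$ with $A=1-\alpha\eps u_t\in[\frac12,\frac32]$, followed by the same Gronwall conclusion. The only cosmetic deviation is that you absorb $\beta\eps\int_\Omega\nabla u\cdot\nabla w_t\,w_t\,\dx$ into the viscous dissipation via Young's inequality, whereas the paper integrates this term by parts (hence the $\Vert\Delta u\Vert_{L^\infty(\Omega)}$ in its stability estimate), a choice that also survives the inviscid case $\nu=0$ reused in Theorems~\ref{approxKuzNPE} and~\ref{ApproxKuzWes}.
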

\begin{proof}
Let $\overline{u}$ and $g$ be defined by Eqs.~(\ref{potentialkzk}) and~(\ref{boundcondkuz}) respectively by the solution $I$ of the Cauchy problem~(\ref{NPEcau2}) for the KZK equation with $I\vert_{z=0}=I_0\in H^s(\mathbb{T}_t\times\mathbb{R}^{n-1})$ and $s\geq 6$ for $n=2$ and $3$, or else $\left[\frac{s}{2}\right]>\frac{n}{2}+1$. In this case, $\overline{u}$ is the global solution of the approximated Kuznetsov system~(\ref{aproxkuzkzkrem}), what is a direct consequence of Theorem~1.2 in Ref.~\cite{Roz2}.
If $I_0\in H^s(\mathbb{T}_t\times \mathbb{R}^{n-1})$ with the chosen $s$, then
 $I\in C(\lbrace z >0\rbrace;H^{s}(\mathbb{T}_{\tau}\times
\mathbb{R}^{n-1}))$. But knowing, thanks to estimate~(\ref{EqEstimateAdrien}),
that $\Delta_y^k I_0\in H^{s-2k}(\mathbb{T}_t\times
 \R^{n-1})$ implies also $\del_{t}^{-k}\Delta_y^k I_0\in
H^{s-2k}(\mathbb{T}_t\times
 \R^{n-1})$ for $1\leq k\leq \left[\frac{s}{2}\right]$, the condition
in~\cite[Thm.~1.2, Point~4]{Roz2} is verified and thus we have the
following regularity  of $I$ on $z$:
for $0\leq k \leq \left[\frac{s}{2} \right]$
$$I(\tau,z,y)\in C^k(\lbrace z >0\rbrace;H^{s-2k}(\mathbb{T}_{\tau}\times \mathbb{R}^{n-1})).$$
 As $\overline{u}$ is defined by~(\ref{potentialkzk}),  we deduce (using as previously the notation $\Omega_1=\mathbb{T}_{\tau}\times \mathbb{R}^{n-1}$)
\begin{align*}
 \overline{u}(\tau,z,y)\hbox{ and }\partial_\tau\overline{u}(\tau,z,y)\in & C^k(\lbrace z >0\rbrace;H^{s-2k}(\Omega_1)),\hbox{ if }0\leq k\leq \left[\frac{s}{2} \right] ,\\
\partial_\tau^2\overline{u}(\tau,z,y)\in & C^k(\lbrace z >0\rbrace;H^{s-1-2k}(\Omega_1)),\hbox{ if }0\leq k\leq \left[\frac{s}{2} \right]-1,
\end{align*}
but we can also say~\cite{Ito,Roz2}, thanks to the exponential decay of the
solution of the KZK equation on $z$,
that
\begin{align*}
\overline{u}(\tau,z,y)\hbox{ and }\partial_\tau\overline{u}(\tau,z,y)\in &  H^k(\lbrace z >0\rbrace;H^{s-2k}(\Omega_1)),\\
\partial_\tau^2\overline{u}(\tau,z,y)\in & H^k(\lbrace z >0\rbrace;H^{s-1-2k}(\Omega_1)).
\end{align*}
This implies  for the chosen $s$ that
\begin{align*}
&\overline{u}(t,x_1,x')\hbox{ and } \partial_t\overline{u}(t,x_1,x')\in  L^2(\mathbb{T}_{t};H^{\left[\frac{s}{2} \right]}(\Omega))\cap H^2(\mathbb{T}_{t};H^{\left[\frac{s}{2} \right]-1}(\Omega)),\\
&\partial_t^2\overline{u}(t,x_1,x')\in  L^2(\mathbb{T}_{t};H^{\left[\frac{s}{2} \right]-1}(\Omega))\cap  H^2(\mathbb{T}_{t};H^{\left[\frac{s}{2} \right]-2}(\Omega)).
\end{align*}
Therefore
\begin{align*}
\overline{u}(t,x_1,x')\in & C^1([0,+\infty[;H^{\left[\frac{s}{2} \right]-1}(\Omega),\\
\partial_t^2\overline{u}(t,x_1,x')\in & C([0,+\infty[;H^{\left[\frac{s}{2} \right]-2}(\Omega).
\end{align*}
For the chosen $s$ these regularities of  $\overline{u}(t,x_1,x')$ give us
regularity~(\ref{regubarkuzkzk}) and allow to have all left-hand terms in the
approximated Kuznetsov system~(\ref{aproxkuzkzkrem}) of the desired regularity,
$i.e$  $C([0,+\infty[;L^2(\Omega))$.
In addition for $\left[\frac{s}{2}\right]>\frac{n}{2}+2$
with the chosen $g$, $u_0=\overline{u}(0)$ and $u_1=\overline{u}_t(0)$ in the conditions of the theorem
 we have
 $$u_0\in H^{\left[\frac{s}{2} \right]}(\Omega),\hbox{ }u_1\in H^{\left[\frac{s}{2} \right]-1}(\Omega) $$
 with
 $$g\in H^s(\mathbb{T}_t\times\mathbb{R}^{n-1}) \hbox{ and }\partial_t g \in H^s(\mathbb{T}_t\times\mathbb{R}^{n-1}).$$
 This implies
 $$g\in  H^{7/4}(]0,T[;H^{\left[\frac{s}{2} \right]-2}(\partial\Omega))\cap H^1(]0,T[;H^{\left[\frac{s}{2} \right]-2+3/2}(\partial\Omega))$$
 with $\left[\frac{s}{2} \right]-2 > \frac{n}{2}$,
 as required by Theorem~\ref{ThMainWPnuPGlobHalf} to have the  well-posedness of
 the solution of the Kuznetsov equation $u$ during the time  $t\in [0,T]$
associated   with system~(\ref{kuzhalfinitbound}).
This completes the well-posedness results and we deduce that $u$ have the desired regularity~(\ref{regukuzkzk}), announced in the theorem.
Moreover, we have $R_{Kuz-KZK}$ in $C([0,+\infty[,L^2(\Omega)).$

Let us now prove~(\ref{estimKuzKZK}) from point~$3$ as it directly implies
 estimate~(\ref{estimKuzKZKex}) from point~$2$.
We subtract the Kuznetsov equation from the approximated Kuznetsov equation (see system~(\ref{aproxkuzkzkrem})), multiply by $(u-\overline{u})_t$ and integrate over $\Omega$ to obtain, as in Ref.~\cite{Perso}, the following stability estimate:
\begin{align*}
\frac{1}{2}\frac{d}{dt}\Big(\int_{\Omega}A(t,x)\; (u-\overline{u})_t^2+ & c^2 (\nabla(u-\overline{u}))^2dx\Big) \\[2mm]
\leq C \varepsilon &\sup(\Vert u_{tt}\Vert_{L^{\infty}(\Omega)};\Vert \Delta u\Vert_{L^{\infty}(\Omega)};\Vert \nabla \overline{u}_t\Vert_{L^{\infty}(\Omega)})\\[2mm]
 & \cdot\left(\Vert (u -\overline{u})_t\Vert_{L^2(\Omega)}^2+ \Vert \nabla (u-\overline{u})\Vert_{L^2(\Omega)}^2\right)\\[2mm]
 &+\varepsilon^2\int_{\Omega}R_{Kuz-KZK} (u-\overline{u})_tdx,
\end{align*}
where $\frac{1}{2}\leq A(t,x)\leq \frac{3}{2}$ for $0\leq t\leq T$ and $x\in \Omega$. By regularity of the solutions, $\sup(\Vert u_{tt}\Vert_{L^{\infty}(\Omega)};\Vert \Delta u\Vert_{L^{\infty}(\Omega)};\Vert \nabla \overline{u}_t\Vert_{L^{\infty}(\Omega)})$ is bounded in time on $[0,T]$.
Moreover, we have $\Vert R_{Kuz-KZK}(t)\Vert_{L^2(\Omega)}$ bounded for $t\in[0,T]$ by the regularity of $\overline{u}$, where $R_{Kuz-KZK}$ is defined in Eq.~(\ref{remkuzkzk}). Then after integration on $[0,t]$, we can write
\begin{align*}
\Vert (u -\overline{u})_t(t)\Vert_{L^2(\Omega)}^2+ &\Vert \nabla (u-\overline{u})(t)\Vert_{L^2(\Omega)}^2\\[2mm]
\leq & 3( \Vert (u -\overline{u})_t(0)\Vert_{L^2(\Omega)}^2+ \Vert \nabla (u-\overline{u})(0)\Vert_{L^2(\Omega)}^2)\\[2mm]
& C_1 \varepsilon \int_0^t \Vert (u -\overline{u})_t(s)\Vert_{L^2(\Omega)}^2+ \Vert \nabla (u-\overline{u})(s)\Vert_{L^2(\Omega)}^2 ds\\[2mm]
&+C_2 \varepsilon^2 \int_0^t\sqrt{\Vert (u -\overline{u})_t(s)\Vert_{L^2(\Omega)}^2+ \Vert \nabla (u-\overline{u})(s)\Vert_{L^2(\Omega)}^2 }ds.
\end{align*}
Thanks to~(\ref{EqSmallInD})
we finally find by the Gronwall Lemma  that for $t\leq \frac{C}{\varepsilon}$
 estimate~(\ref{estimKuzKZK}) holds true, thereby concluding the
proof.
\end{proof}
\begin{remark}\label{RemInvCKZK}
Let us discuss the corresponding approximation results in the inviscid case.
 We have two approximation results:
 \begin{enumerate}
  \item between the solutions  $\overline{\mathbf{U}}_{KZK}$ of the KZK equation
and $\mathbf{U}_{Euler}$ of the Euler system~\cite{Dekkers,Roz3}
(see~\cite[Thm~6.8]{Dekkers} for the definitions of
$\mathbf{U}_{Euler}$ and $\overline{\mathbf{U}}_{KZK}$)    in a cone
 $$C(T)=\lbrace 0<t<T \vert \;T<\frac{T_0}{\varepsilon}\rbrace\times Q_{\varepsilon}(t)$$
 with $$Q_{\varepsilon}(s)=\lbrace x=(x_1,x'):\vert x_1\vert\leq \frac{R}{\varepsilon}-Ms, \;M\geq c, x'\in \mathbb{R}^{n-1}\rbrace$$ and with
 $$\Vert \nabla \mathbf{U}_{Euler} \Vert_{L^{\infty}([0,\frac{T_0}{\varepsilon}[;H^{s-1}(Q_{\varepsilon}))}<\varepsilon C \hbox{ for }s>\left[\frac{n}{2}\right]+1;$$
 \item between the solutions $\mathbf{U}_{Euler}$ of the Euler system and
$\overline{\mathbf{U}}_{Kuzn}$ of the Kuznetsov equation~\cite{Dekkers}
(see~\cite[Thm.~6.6]{Dekkers} for the definitions of
$\mathbf{U}_{Euler}$ and $\overline{\mathbf{U}}_{Kuzn}$) in
$[0,\frac{T_0}{\eps}[\times\R^n$ containing $C(T)$.
 \end{enumerate}
 Consequently,  we obtain the approximation result between the solutions $\overline{\mathbf{U}}_{KZK}$ of the KZK equation and the solutions $\overline{\mathbf{U}}_{Kuzn}$ of the Kuznetsov equation in $C(T)$ by the triangular inequality:
 $$\|\overline{\mathbf{U}}_{Kuzn}-\overline{\mathbf{U}}_{KZK}\|_{L^2(Q_\eps(t))}^2\leq K(\eps^3t+\delta^2)  e^{K\varepsilon t}\leq 9\varepsilon^2,$$
 as soon as
 $\Vert(\overline{\mathbf{U}}_{Kuzn}-\overline{\mathbf{U}}_{KZK})(0)\Vert_{L^2(Q_{\varepsilon}(0))}\leq \delta<\eps$.
 The initial data are constructed on the initial data $I_0$ for the KZK equation. More precisely we take $I_0\in H^s(\mathbb{T}_{\tau}\times \mathbb{R}^{n-1})$ for $s>\max\lbrace 10, \left[\frac{n}{2}\right]+1\rbrace,$ which ensures in the case of the same initial data
 $$\overline{\mathbf{U}}_{Kuzn}(0)=\overline{\mathbf{U}}_{KZK}(0)=\mathbf{U}_{Euler}(0)$$
 the existence with necessary regularity of all solutions: of the KZK equation, of the Euler system and of the Kuznetsov equation. Otherwise, to ensure the boundness and the minimal regularity  $C([0,\frac{T_0}{\varepsilon}[;L^2(Q_\varepsilon))$ of the remainder terms it sufficient to impose $s\ge 6$.
\end{remark}

\section{Approximation of the solutions of the Kuznetsov equation with the solutions of the NPE equation.}\label{secKuzNPE}
Now let us go back to the NPE equation~(\ref{NPE2}) and consider its
\textit{ansatz} (see~\cite{Dekkers} for the derivation of the NPE equation from
the isentropic Navier-Stokes system or  the Euler system). In
contrast with Eq.~(\ref{paraxpot}) for the KZK equation, this time  the
velocity potential is given~\cite{Roz1} by
\begin{equation}\label{paraxpotnpe}
u(x,t)=\Psi(\varepsilon t, x_1- c t,\sqrt{\epsilon}x')=\Psi(\tau,z,y).
\end{equation}
Thus we directly obtain from the Kuznetsov equation~(\ref{KuzEq}) with the paraxial change of variable
\begin{equation}\label{chvarnpe}
\tau= \varepsilon t,\;\;\;z=x_1- c t,\;\;\; y=\sqrt{\varepsilon}x',\end{equation}
  that
\begin{align*}
&\partial_t^2 u-c^2\Delta u- \varepsilon \partial_t\left((\nabla u)^2+\frac{\gamma-1}{2c^2}(\partial_t u)^2+\frac{\nu}{\rho_0}\Delta u\right)\\
&=\varepsilon \left(-2c \partial^2_{\tau z} \Psi-c^2\Delta_y\Psi+\frac{\nu}{\rho_0} c\partial^3_z \Psi+\frac{\gamma+1}{2}c \partial_z(\partial_z \Psi)^2\right)+\varepsilon^2 R_{Kuz-NPE}
\end{align*}
with
\begin{align}
\varepsilon^2 R_{Kuz-NPE}=&\varepsilon^2 \big( \partial^2_{\tau}\Psi-\frac{\nu}{\rho_0} \partial^2_z\partial_{\tau}\Psi+\frac{\nu}{\rho_0}c\Delta_y\partial_z\Psi-(\gamma-1)\partial_{\tau}\Psi \;\partial^2_z\Psi\label{approxeqKuzNPE}\\
&\;\;\;\;\;-2(\gamma-1)\partial_z\Psi\; \partial^2_{\tau z}\Psi-2 \partial_z\Psi \;\partial^2_{\tau z}\Psi+2c \nabla_y\Psi\;\nabla_y\partial_z\Psi \big)\nonumber\\
&+\varepsilon^3 \big( -\frac{\nu}{\rho_0}\Delta_y \partial_{\tau}\Psi+2\frac{\gamma-1}{c}\partial_{\tau}\Psi\;\partial^2_{\tau z}\Psi+\frac{\gamma-1}{c}\partial_z \Psi\; \partial^2_{\tau}\Psi\nonumber\\
&\;\;\;\;\;\;\;\;\;\; -2\nabla_y\Psi\;\nabla_y\partial_{\tau}\Psi\big)
+\varepsilon^4 (-\frac{\gamma-1}{c^2}\partial_{\tau}\Psi \partial^2_{\tau}\Psi).\nonumber
\end{align}
We obtain the NPE equation satisfying by $\partial_z\Psi$ modulo a multiplicative constant:
$$
\partial^2_{\tau z}\Psi-\frac{\gamma+1}{4}\partial_z(\partial_z \Psi)^2-\frac{\nu}{2\rho_0}\partial^3_z \Psi+\frac{c}{2}\Delta_y \Psi=0.
$$
In the sequel we  work with $\xi$ defined by
\begin{align}
\xi(\tau,z,y)=&-\frac{\rho_0}{c}\partial_z\Psi,\label{P1NPE}
\end{align}
 which solves the Cauchy problem for the NPE equation
\begin{equation}\label{npecau}
\left\lbrace
\begin{array}{c}
\partial^2_{\tau z} \xi+\frac{(\gamma+1)c}{4\rho_0}\partial_z^2[(\xi)^2]-\frac{\nu}{2\rho_0}\partial^3_z  \xi+\frac{c}{2}\Delta_y  \xi=0\hbox{ on }\mathbb{R}_+\times\mathbb{T}_{z}\times\mathbb{R}^{n-1},\\
\xi(0,z,y)=\xi_0(z,y)\hbox{ on }\mathbb{T}_{z}\times\mathbb{R}^{n-1},
\end{array}\right.
\end{equation}
in the class of $L-$periodic functions with respect to the variable $z$ and with mean value zero along $z$.
The introduction of the operator $\partial_z^{-1}$ defined similarly to $\partial_{\tau}^{-1}$ in Eq.~(\ref{invdtau})
allows us to consider instead of Eq.~(\ref{NPE2}) the following equivalent equation
$$\partial_{\tau }
\xi+\frac{(\gamma+1)c}{4\rho_0}\partial_z[(\xi)^2]-\frac{\nu}{2\rho_0}
\partial^2_z  \xi+\frac{c}{2}\partial_{z}^{-1}\Delta_y
\xi=0\hbox{ on }\mathbb{R}_+\times\mathbb{T}_{z}\times\mathbb{R}^{n-1} .$$
This time, in comparison with the KZK equation, we use the bijection between this two models (see~\cite{Dekkers}). We also update our notation for
$\Omega_1=\mathbb{T}_z\times \mathbb{R}^{n-1}_y$  and take $s>\frac{n}{2}+1$. Suppose that $$\xi_0\in H^{s+2}(\mathbb{T}_z\times \mathbb{R}^{n-1}_y) \quad \hbox{and} \quad\int_{\mathbb{T}_z}\xi_0(z,y)\;dz=0.$$
Consequently there exists a constant $r>0$ such that if $\Vert
\xi_0\Vert_{H^{s+2}(\mathbb{T}_z\times \mathbb{R}^{n-1}_y)}<r$, then,
by~\cite[Thms.~1.1,~1.2]{Roz2}, there exists
a unique solution
$$\xi\in C([0,\infty[;H^{s+2}(\mathbb{T}_z\times \mathbb{R}^{n-1}_y))$$ of
 the NPE Cauchy problem~(\ref{npecau})
satisfying
$$\int_{\mathbb{T}_z} \xi(\tau,z,y) \;dz=0 \;\;\hbox{ for any }\;\tau\geq 0,\; y\in\mathbb{R}^{n-1}.$$
We define $\partial_{x_1} \overline{u}(t,x_1,x'):=-\frac{c}{\rho_0}\xi(\tau,z,y)$ with the change of variable~(\ref{chvarnpe})
 and
$$\overline{u}(t,x_1,x')=-\frac{c}{\rho_0} \partial_z^{-1} \xi(\tau,z,y)=\left(-\frac{c}{\rho_0}\right)\left(\int_0^z \xi(\tau,s,y)ds+\int_0^L \frac{s}{L} \xi(\tau,s,y)ds\right).$$
We take $u_1(x_1,x'):= \partial_t\overline{u}(0,x_1,x') $ and $u_0(x_1,x'):=-\frac{c}{\rho_0}\partial_z^{-1}\xi_0(z,y)$, which implies
$$u_0\in H^{s+2}(\mathbb{T}_{x_1}\times \mathbb{R}^{n-1}_{x'})\hbox{ and }u_1\in H^{s}(\mathbb{T}_{x_1}\times \mathbb{R}^{n-1}_{x'}).$$
Thus for these initial data there exists
$$\overline{u}\in C([0,\infty[;H^{s+1}(\mathbb{T}_{x_1}\times \mathbb{R}^{n-1}_{x'}))\cap C^1([0,\infty[;H^{s}(\mathbb{T}_{x_1}\times \mathbb{R}^{n-1}_{x'})),$$
the unique solution on $\mathbb{T}_{x_1}\times \mathbb{R}^{n-1}_{x'}$ of the approximated Kuznetsov system
   \begin{equation}\label{CauchyaproxKuzNPE}
\left\lbrace
\begin{array}{c}
\overline{u}_{tt}-c^2 \Delta \overline{u}-\nu \varepsilon \Delta \overline{u}_t-\alpha \varepsilon \overline{u}_t \overline{u}_{tt}-\beta \varepsilon \nabla \overline{u} \nabla \overline{u}_t=\varepsilon^2 R_{Kuz-NPE},\\
\overline{u}(0)=u_0\in H^{s+2}(\mathbb{T}_{x_1}\times \mathbb{R}^{n-1}_{x'}),\;\;\;\overline{u}_t(0)=u_1\in H^{s+1}(\mathbb{T}_{x_1}\times \mathbb{R}^{n-1}_{x'})
\end{array}\right.
\end{equation}
with $R_{Kuz-NPE}$ defined in Eq.~(\ref{approxeqKuzNPE}).
If we consider the Cauchy problem
\begin{equation}\label{CauProbKuz}
\left\lbrace
\begin{array}{l}
 \partial^2_t u -c^2 \Delta u=\varepsilon\partial_t\left( (\nabla u)^2+\frac{\gamma-1}{2c^2}(\partial_t u)^2+\frac{\nu}{\rho_0}\Delta u\right),\\
u(0)=u_0,\;\;u_t(0)=u_1,
\end{array}
\right.
\end{equation}
 for the Kuznetsov equation on $\mathbb{T}_{x_1}\times \mathbb{R}^{n-1}_{x'}$ with $u_0$ and $u_1$ derived from $\xi_0$, we have
$$\Vert u_0\Vert_{H^{s+2}(\mathbb{T}_{x_1}\times \mathbb{R}^{n-1}_{x'})}+\Vert u_1\Vert_{H^{s}(\mathbb{T}_{x_1}\times \mathbb{R}^{n-1}_{x'})}\leq C \Vert \xi_0\Vert_{H^{s+2}(\mathbb{T}_z\times \mathbb{R}^{n-1}_y)}.$$
Hence, if $\Vert \xi_0\Vert_{H^{s+2}(\mathbb{T}_z\times \mathbb{R}^{n-1}_y)}$
 is
small enough~\cite{Perso}, we have a unique bounded in time solution
$$u\in C([0,\infty[;H^{s+1}(\Omega))\cap C^1([0,\infty[;H^{s}(\Omega))$$
 of the Cauchy problem for the Kuznetsov equation~(\ref{CauProbKuz}).
\begin{theorem}\label{approxKuzNPE}
For $\nu\ge 0$ let $u$ and $\overline{u}$ be  the defined above solutions  of the exact Cauchy problem~(\ref{CauProbKuz}) and  of the approximated Cauchy problem~(\ref{CauchyaproxKuzNPE}) for the Kuznetsov equation on $\Omega= \mathbb{T}_{x_1}\times \mathbb{R}^{n-1}_{x'}$  respectively.
 Then for $\nu>0$
there exist $K$, $C$, $C_1$,  $C_2>0$  such that for all
$t<\frac{C}{\varepsilon} $
 estimate~(\ref{estimKuzKZKex})   is valid  and in addition it
holds Point~3 of Theorem~\ref{approxKuzKZKbis}.

Moreover, if for $n\leq 3$,  and $\xi_0\in
H^s(\mathbb{T}_{x_1}\times \mathbb{R}^{n-1})$ with $s \geq 4$, then the
approximated solution
satisfies
\begin{align*}
&\overline{u}(t,x_1,x')\in C([0,+\infty[;H^4(\Omega)),\;
\partial_t\overline{u}(t,x_1,x')\in C([0,+\infty[;H^2(\Omega)),\\
&\partial_t^2\overline{u}(t,x_1,x')\in C([0,+\infty[;L^2(\Omega)).
\end{align*}

If for $n \geq 4$ $\xi_0\in H^s(\mathbb{T}_{x_1}\times \mathbb{R}^{n-1})$ with $s \geq \frac{n}{2}+2$, then the approximated solution
satisfies
\begin{align*}
&\overline{u}(t,x_1,x')\in C([0,+\infty[;H^s(\Omega)),\;
\partial_t\overline{u}(t,x_1,x')\in C([0,+\infty[;H^{s-2}(\Omega)),\\
&\partial_t^2\overline{u}(t,x_1,x')\in C([0,+\infty[;H^{s-4}(\Omega)).
\end{align*}
Under these conditions for $n\geq 1$
$$
R_{Kuz-NPE}\in  C([0,+\infty[;L^2(\mathbb{T}_{x_1}\times \mathbb{R}^{n-1})).
$$
For $\nu=0$ all previous results stay true on a finite time interval $[0,T]$.
\end{theorem}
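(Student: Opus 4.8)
The plan is to separate the stability estimate from the regularity bookkeeping, since the approximation scheme is identical in spirit to the one already carried out for the KZK equation in Theorem~\ref{approxKuzKZKbis}: both $u$ and $\overline{u}$ solve the Kuznetsov equation, the function $\overline{u}$ only up to the $\eps^2$-remainder $R_{Kuz-NPE}$. First, for $\nu>0$, I would subtract the exact Cauchy problem~(\ref{CauProbKuz}) from the approximated system~(\ref{CauchyaproxKuzNPE}); the difference $w=u-\overline{u}$ then solves a strongly damped nonlinear wave equation with source $\eps^2 R_{Kuz-NPE}$. Multiplying by $w_t$ and integrating over $\Omega=\mathbb{T}_{x_1}\times\mathbb{R}^{n-1}_{x'}$ reproduces, line for line, the energy identity of the proof of Theorem~\ref{approxKuzKZKbis}: one gets the coercive quantity $\int_\Omega A(t,x)\,w_t^2+c^2(\nabla w)^2\,\dx$ with $\tfrac12\le A\le\tfrac32$, a factor $C\eps$ multiplying this energy, and the source contribution $\eps^2\int_\Omega R_{Kuz-NPE}\,w_t\,\dx$. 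Once $R_{Kuz-NPE}$ is shown to lie in $C([0,T];L^2(\Omega))$, Gronwall's lemma delivers estimate~(\ref{estimKuzKZKex}) for equal initial data, and, starting from $\delta$-close data satisfying~(\ref{EqSmallInD}), it delivers Point~3 of Theorem~\ref{approxKuzKZKbis}, both on the time scale $t\le C/\eps$.

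The real work is the regularity of $\overline{u}$ and the resulting membership of $R_{Kuz-NPE}$ in $C([0,\infty[;L^2)$. By~\cite[Thms.~1.1,~1.2]{Roz2} the NPE Cauchy problem~(\ref{npecau}) has, for small $\xi_0\in H^s(\mathbb{T}_z\times\mathbb{R}^{n-1}_y)$, a global solution $\xi\in C([0,\infty[;H^s)$. Because $\overline{u}=-\tfrac{c}{\rho_0}\partial_z^{-1}\xi$ and $\partial_{x_1}\overline{u}=-\tfrac{c}{\rho_0}\xi$, the bound for $\partial_z^{-1}$ analogous to~(\ref{EqEstimateAdrien}) transfers this regularity to $\overline{u}$, $\partial_t\overline{u}$ and $\partial_t^2\overline{u}$ through the change of variables~(\ref{chvarnpe}), yielding the memberships announced in the statement for $n\le3$ and for $n\ge4$. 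The point to watch is that every factor $\partial_\tau$ appearing in $R_{Kuz-NPE}$ must be converted into spatial derivatives by means of the NPE equation written in first order form, namely $\partial_\tau\xi=-\tfrac{(\gamma+1)c}{4\rho_0}\partial_z(\xi^2)+\tfrac{\nu}{2\rho_0}\partial_z^2\xi-\tfrac{c}{2}\partial_z^{-1}\Delta_y\xi$, so that each $\tau$-derivative costs up to two extra derivatives in $(z,y)$.

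The hard part is to locate, inside the long expression~(\ref{approxeqKuzNPE}), the term asking the most regularity and to confirm that the stated $s$ controls it in $L^2$. Rewriting everything through $\xi=-\tfrac{\rho_0}{c}\partial_z\Psi$ and inserting the relation for $\partial_\tau\xi$, I expect the binding linear term to be $\partial^2_\tau\Psi$ (and, when $\nu>0$, the viscous term $\tfrac{\nu}{\rho_0}\Delta_y\partial_\tau\Psi$): applying the NPE relation twice turns $\partial^2_\tau\Psi$ into a contribution of the type $\partial_z^{-3}\Delta_y^2\xi$, i.e.\ four transversal derivatives of $\xi$, forcing $\xi\in H^4$ and hence $s\ge4$ for $n\le3$. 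The quadratic terms such as $\partial_z\Psi\,\partial^2_{\tau z}\Psi$ and $\partial_\tau\Psi\,\partial^2_z\Psi$ each carry, after the substitution, up to two derivatives of $\xi$ in each factor, so that the algebra property $H^{s-2}\hookrightarrow L^2$ with $s-2>\tfrac n2$ is exactly what is needed; this reproduces the threshold $s>\tfrac n2+2$ for $n\ge4$, which is dominated by the derivative count $s\ge4$ for $n\le3$. Collecting these estimates gives $R_{Kuz-NPE}\in C([0,\infty[;L^2(\mathbb{T}_{x_1}\times\mathbb{R}^{n-1}))$, precisely the hypothesis consumed by the energy argument of the first step.

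Finally, for $\nu=0$ the strong damping $-\nu\eps\Delta w_t$ drops out, so the parabolic smoothing underlying both the NPE and the Kuznetsov well-posedness, together with the global control of the energy, are lost; since the inviscid Kuznetsov Cauchy problem~\cite{Perso} and the inviscid NPE are only locally well-posed, every statement above survives but only on a finite interval $[0,T]$, with constants depending on $T$. The energy identity, the boundedness of $A(t,x)$ and of the remainder, and the Gronwall step are otherwise unaffected on $[0,T]$.
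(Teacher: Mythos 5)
Your proposal is correct and takes essentially the same route as the paper: the stability estimate is deferred, exactly as the paper does, to the proof of Theorem~\ref{approxKuzKZKbis} (subtract, multiply by $(u-\overline{u})_t$, integrate, Gronwall), and the regularity of $\overline{u}$ and hence of $R_{Kuz-NPE}$ is obtained by trading each $\tau$-derivative of $\xi$ for two space derivatives through the NPE equation and transferring via $\overline{u}=-\frac{c}{\rho_0}\partial_z^{-1}\xi$, with the inviscid case reduced to local-in-time well-posedness. One bookkeeping remark: the paper pins the minimal regularity on the single quadratic term $\partial_\tau\Psi\,\partial_\tau^2\Psi$, which alone forces both $s\ge 4$ (so that $\partial_\tau^2\Psi\in H^{s-4}\subset L^2$) and $s-2>\frac{n}{2}$ (so that $\partial_\tau\Psi\in H^{s-2}\hookrightarrow L^\infty$) and persists when $\nu=0$, whereas the quadratic terms you single out ($\partial_z\Psi\,\partial^2_{\tau z}\Psi$ and $\partial_\tau\Psi\,\partial^2_z\Psi$) in fact require less than $s>\frac{n}{2}+2$; this does not affect the sufficiency of your estimates, only the optimality attribution.
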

\begin{proof}
For $\nu>0$ the global existence of $u$ and of $\overline{u}$ has already been shown.
The proof of the approximation estimate follows exactly the proof given for Theorem~\ref{approxKuzKZKbis} and thus is  omitted.
The case $\nu=0$ implies the same approximation result except that $u$ and $\overline{u}$ are only locally well posed on an interval $[0,T]$.

We can see for $n=2$ or $3$, using the previous arguments that the minimum regularity of the initial data (see Table~\ref{TABLE2}) to have the remainder terms
$$
R_{Kuz-NPE}\in  C([0,+\infty[;L^2(\mathbb{T}_{x_1}\times \mathbb{R}^{n-1}))
$$
corresponds to
$\xi_0\in H^s(\mathbb{T}_{x_1}\times \mathbb{R}^{n-1})$ with $s \geq 4$, since then for $0\leq k \leq 2$
$$\xi(\tau,z,y)\in C^k([0,+\infty[\rbrace;H^{s-2k}(\mathbb{T}_{z}\times \mathbb{R}^{n-2})),$$
which finally implies with formula $\overline{u}=-\frac{c}{\rho_0}\partial_z^{-1}\xi $ that
\begin{align*}
&\overline{u}(t,x_1,x')\in C([0,+\infty[;H^4(\Omega)),\;
\partial_t\overline{u}(t,x_1,x')\in C([0,+\infty[;H^2(\Omega)),\\
&\partial_t^2\overline{u}(t,x_1,x')\in C([0,+\infty[;L^2(\Omega)).
\end{align*}
In the same way for $n\geq 4$ we find the minimal regularity for $\xi_0\in H^s(\Omega)$ with $s >\frac{n}{2}+2$ as it implies
\begin{align*}
&\overline{u}(t,x_1,x')\in C([0,+\infty[;H^{s}(\Omega)),\;
\partial_t\overline{u}(t,x_1,x')\in C([0,+\infty[;H^{s-2}(\Omega)),\\
&\partial_t^2\overline{u}(t,x_1,x')\in C([0,+\infty[;H^{s-4}(\Omega)).
\end{align*}
The optimality of the  previously chosen $s$ also comes from the fact that in Eq.~(\ref{approxeqKuzNPE}) the least regular term in $R_{Kuz-NPE}$ is $\partial_{\tau} \Psi \partial_{\tau}^2 \Psi $ presenting for both viscous and inviscid cases.
\end{proof}

\section{The Kuznetsov equation and the Westervelt equation}\label{SecWest}
\subsection{Derivation of the Westervelt equation from the Kuznetsov equation.}\label{subSecWestD}
Let $u$ be a solution of  the Kuznetsov equation~(\ref{KuzEq}).
Similarly as in Ref.~\cite{Aanonsen} we set
\begin{equation}\label{ansatzKuzWes}
\overline{\Pi}=u+\frac{1}{2 c^2}\varepsilon \partial_t[u^2]
\end{equation}
and obtain
$$\partial_t^2\overline{\Pi}-c^2\Delta \overline{\Pi}=\varepsilon \partial_t\left(\frac{\nu}{\rho_0}\Delta u+\frac{\gamma+1}{2c^2} (\partial_t u)^2+\frac{1}{c^2}u(\partial_t^2-c^2\Delta u)\right).$$
By definition~(\ref{ansatzKuzWes}) of $\overline{\Pi}$ we have
\begin{equation}\label{EqWestApp}
 \partial_t^2\overline{\Pi}-c^2\Delta \overline{\Pi}=\varepsilon \partial_t\left(\frac{\nu}{\rho_0}\Delta \overline{\Pi}+\frac{\gamma+1}{2c^2} (\partial_t \overline{\Pi})^2\right)+\varepsilon^2 R_{Kuz-Wes},
\end{equation}
where
\begin{align}
\varepsilon^2 R_{Kuz-Wes}=&\varepsilon^2 \partial_t\left[ -\frac{1}{2c^2}\frac{\nu}{\rho_0}\Delta(u \partial_t u)-\frac{\gamma+1}{2c^4}\partial_t u\partial^2_t(u^2)\right.\nonumber\\
&\left.\;\;\;\; +\frac{1}{c^2}u\partial_t\left( (\nabla u)^2+\frac{\gamma-1}{2c^2}(\partial_t u)^2+\frac{\nu}{\rho_0}\Delta u\right)\right]\nonumber\\
&+\varepsilon^3 \partial_t \left[-\frac{\gamma+1}{8c^6}[\partial^2_t(u^2)]^2\right].\label{resWes}
\end{align}
We recognize the Westervelt equation~(\ref{West}) obtained up to remainder terms
of order~$\eps^2$.

\subsection{Approximation of the solutions of the Kuznetsov equation by the solutions of the Westervelt equation}\label{subSecWestA}
The well-posedness of the Westervelt equation  follows directly from~\cite{Perso}.
For a solution of the Cauchy problem~(\ref{CauProbKuz}) for the Kuznetsov equation $u$ we define as in Subsection~\ref{subSecWestD} $\overline{\Pi}$ by
 Eq.~(\ref{ansatzKuzWes}). Hence $\overline{\Pi}$ is the solution of the approximated Cauchy problem for the Westervelt equation~(\ref{EqWestApp}) with the initial data
 \begin{equation}\label{EqInDataWAp}
  \overline{\Pi}(0)=\Pi_0\hbox{, }\quad\partial_t\overline{\Pi}(0)=\Pi_1,
 \end{equation}
 defined by
\begin{align}
\Pi_0=&u_0+\frac{1}{c^2}\varepsilon u_0 u_1,\label{pi0}\\
\Pi_1=&u_1+\frac{1}{c^2}\varepsilon u_1^2+ \frac{1}{c^2}\varepsilon u_0 \partial_t^2u(0)\label{pi1}\\
=&u_1+\frac{1}{c^2}\varepsilon u_1^2+ \frac{1}{c^2}\varepsilon u_0\frac{1}{1-\frac{\gamma-1}{c^2}\varepsilon u_1}\left(c^2\Delta u_0 +\frac{\nu}{\rho_0}\varepsilon \Delta u_1+2\varepsilon \nabla u_0 \nabla u_1\right)\nonumber
\end{align}
with $u_0$ and $u_1$ initial data of the Cauchy problem~(\ref{CauProbKuz}) for the Kuznetsov equation.

For $s>\frac{n}{2}$ and $\nu>0$, if we take $u_0\in H^{s+3}(\mathbb{R}^n)$ and $u_1\in H^{s+3}(\mathbb{R}^3)$, we have $\Pi_0\in H^{s+3}(\mathbb{R}^n)\subset H^{s+2}(\mathbb{R}^n)$ and $\Pi_1 \in H^{s+1}(\mathbb{R}^n)$ with
$$\Vert \Pi_0\Vert_{H^{s+2}(\mathbb{R}^n)}+\Vert \Pi_1\Vert_{H^{s+1}(\mathbb{R}^n)}\leq C (\Vert u_0\Vert_{H^{s+3}(\mathbb{R}^n)}+\Vert u_1\Vert_{H^{s+3}(\mathbb{R}^n)} ).$$
In the inviscid case when $\nu=0$, for $s>\frac{n}{2}$  if we  still take $u_0\in H^{s+3}(\mathbb{R}^n)$, but $u_1\in H^{s+2}(\mathbb{R}^3)$, we have $\Pi_0\in  H^{s+2}(\mathbb{R}^n)$ and $\Pi_1 \in H^{s+1}(\mathbb{R}^n)$ with the estimate
$$\Vert \Pi_0\Vert_{H^{s+2}(\mathbb{R}^n)}+\Vert \Pi_1\Vert_{H^{s+1}(\mathbb{R}^n)}\leq C (\Vert u_0\Vert_{H^{s+3}(\mathbb{R}^n)}+\Vert u_1\Vert_{H^{s+2}(\mathbb{R}^n)} ).$$
Then, similarly to our previous work~\cite{Perso}, we obtain  the
following result.
\begin{theorem}\label{ThWPWesR3}
Let $n\geq 1$ and $s>\frac{n}{2}$.
\begin{enumerate}
\item If $\nu>0$, $u_0\in H^{s+3}(\mathbb{R}^n)$ and $u_1\in H^{s+3}(\mathbb{R}^n)$, then there exists a constant $k_2>0$ such that for
\begin{equation}\label{smallcondWPwes}
\Vert u_0\Vert_{H^{s+4}(\mathbb{R}^n)}+\Vert u_1\Vert_{H^{s+3}(\mathbb{R}^n)} <k_2,
\end{equation}
the exact Cauchy problem for the Westervelt equation
\begin{equation}\label{CauPbWes}
\left\lbrace
\begin{array}{l}
\partial_t^2\Pi-c^2\Delta \Pi=\varepsilon \partial_t\left(\frac{\nu}{\rho_0}\Delta \Pi+\frac{\gamma+1}{2c^2} (\partial_t \Pi)^2\right),\\
\Pi(0)=\Pi_0\hbox{, }\partial_t\Pi(0)=\Pi_1
\end{array}
\right.
\end{equation}
with $\Pi_0$ and $\Pi_1$ defined by Eqs.~(\ref{pi0}) and~(\ref{pi1}),
 has a unique global in time solution
\begin{align}
\Pi  & \in H^2(]0,+\infty[,H^s(\mathbb{R}^n))\cap H^1(]0,+\infty[,H^{s+2}(\mathbb{R}^n))\label{RegWes31}\\
\hbox{ and } &\hbox{if } s\geq 1  \nonumber\\
\Pi  &\in C([0,+\infty[,H^{s+2}(\mathbb{R}^n))\cap C^1([0,+\infty[,H^{s+1}(\mathbb{R}^n))\cap C^2([0,+\infty[,H^{s-1}(\mathbb{R}^n))\label{RegWes32}
\end{align}
Moreover, $\overline{\Pi}$, obtained from the solution $u$ of the Kuznetsov equation with Eq.~(\ref{ansatzKuzWes}), is the unique global in time solution of the approximated Cauchy problem~(\ref{EqWestApp}),~(\ref{EqInDataWAp}) with the same regularity as $\Pi$.
\item  Let $\nu = 0$, $u_0\in H^{s+3}(\mathbb{R}^n)$ and $u_1\in H^{s+2}(\mathbb{R}^n)$. Then there exists a constant $k_2>0$ such that if
\begin{equation}\label{smallcondWPwesnu0}
\Vert u_0\Vert_{H^{s+3}(\mathbb{R}^n)}+\Vert u_1\Vert_{H^{s+2}(\mathbb{R}^n)} <k_2,
\end{equation}
then the Cauchy problem~(\ref{CauPbWes}) for the Westervelt equation
with $\Pi_0$ and $\Pi_1$, defined by Eqs.~(\ref{pi0}) and~(\ref{pi1}),
 has a unique  solution on all finite time interval $[0,T]$
\begin{align}
\Pi  &\in C([0,T],H^{s+2}(\mathbb{R}^n))\cap C^1([0,T],H^{s+1}(\mathbb{R}^n))\cap C^2([0,T],H^{s}(\mathbb{R}^n)).\label{RegWesnu0}
\end{align}
Moreover, $\overline{\Pi}$, defined by Eq.~(\ref{ansatzKuzWes}), is the unique local in time solution of the approximated Cauchy problem~(\ref{EqWestApp}),~(\ref{EqInDataWAp}) with the same regularity as $\Pi$.
\end{enumerate}
\end{theorem}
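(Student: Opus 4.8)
The plan is to reduce the whole statement to the well-posedness theory already developed for the Kuznetsov equation in~\cite{Perso}, exploiting the fact that the Westervelt problem~(\ref{CauPbWes}) shares exactly the same quasilinear structure. Expanding $\partial_t((\partial_t\Pi)^2)=2\,\partial_t\Pi\,\partial_t^2\Pi$ in the right-hand side of~(\ref{CauPbWes}) puts the equation in the form
\begin{equation*}
\left(1-\eps\frac{\gamma+1}{c^2}\partial_t\Pi\right)\partial_t^2\Pi-c^2\Delta\Pi-\eps\frac{\nu}{\rho_0}\Delta\partial_t\Pi=0 ,
\end{equation*}
which for $\nu>0$ is a quasilinear strongly damped wave equation and for $\nu=0$ a quasilinear wave equation. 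The only structural difference with the Kuznetsov equation lies in the precise quadratic nonlinearity, and this does not affect the functional-analytic scheme. The essential point is the non-degeneracy of the leading coefficient: one needs $1-\eps\frac{\gamma+1}{c^2}\partial_t\Pi$ bounded away from zero, which by the Sobolev embedding $H^{s+1}(\R^n)\hookrightarrow L^\infty(\R^n)$ for $s>\frac n2$ is guaranteed as soon as $\|\partial_t\Pi\|_{L^\infty}$ is small, hence as soon as the data are small.

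First I would check the hypotheses on the initial data. The estimates stated just before the theorem already give $\Pi_0\in H^{s+2}(\R^n)$ and $\Pi_1\in H^{s+1}(\R^n)$ together with
\begin{equation*}
\|\Pi_0\|_{H^{s+2}(\R^n)}+\|\Pi_1\|_{H^{s+1}(\R^n)}\le C\left(\|u_0\|_{H^{s+3}(\R^n)}+\|u_1\|_{H^{s+3}(\R^n)}\right)
\end{equation*}
in the viscous case (and the analogous bound with $u_1\in H^{s+2}$ when $\nu=0$), so the smallness condition~(\ref{smallcondWPwes}) (respectively~(\ref{smallcondWPwesnu0})) on $(u_0,u_1)$ transfers to the smallness of $(\Pi_0,\Pi_1)$ required to keep the leading coefficient non-degenerate. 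Then I would apply directly the result of~\cite{Perso}: for $\nu>0$ the strong damping provides the parabolic regularization yielding a unique \emph{global} solution with the regularity~(\ref{RegWes31}), and the time continuity~(\ref{RegWes32}) follows by standard interpolation in the Bochner scale and by reading $\partial_t^2\Pi=\frac1{a(\Pi)}\big(c^2\Delta\Pi+\eps\frac{\nu}{\rho_0}\Delta\partial_t\Pi\big)$ off the equation; for $\nu=0$ the loss of damping restricts the argument to a hyperbolic energy estimate, giving only a unique solution on each finite interval $[0,T]$ with the regularity~(\ref{RegWesnu0}).

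It remains to treat $\overline{\Pi}$. By the derivation of Subsection~\ref{subSecWestD}, the function $\overline{\Pi}=u+\frac{\eps}{2c^2}\partial_t[u^2]$ built from the Kuznetsov solution $u$ of~(\ref{CauProbKuz}) solves the approximated problem~(\ref{EqWestApp}) with data~(\ref{EqInDataWAp}) exactly: this is precisely the content of identity~(\ref{EqWestApp}) together with the definitions~(\ref{pi0})--(\ref{pi1}). The global (resp.\ local) existence of $u$ in the same Sobolev scale is furnished by~\cite{Perso} under the smallness of $(u_0,u_1)$, so the only thing to verify is that the corrective term $\frac{\eps}{2c^2}\partial_t[u^2]=\frac{\eps}{c^2}u\,\partial_t u$ does not degrade the regularity class of $u$. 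Since $s>\frac n2$ makes $H^{s}(\R^n)$ a Banach algebra, the products $u\,\partial_t u$ and their spatial and temporal derivatives remain in the spaces dictating~(\ref{RegWes31})--(\ref{RegWes32}) (respectively~(\ref{RegWesnu0})), so $\overline{\Pi}$ inherits exactly the same regularity as $\Pi$; uniqueness for the approximated problem then forces $\overline{\Pi}$ to be its unique solution.

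I expect the main obstacle to be twofold, both parts being bookkeeping rather than conceptual. The genuinely delicate point is maintaining the non-degeneracy of the leading coefficient uniformly in time in the viscous global case, which is where the smallness of the data is indispensable and where the scheme of~\cite{Perso} does the real work. The secondary difficulty is the derivative counting for $\overline{\Pi}$: because the ansatz~(\ref{ansatzKuzWes}) introduces a time derivative, one must start from a Kuznetsov solution one order more regular in time than the target Westervelt class, which is exactly why the hypotheses demand $u_1\in H^{s+3}$ (resp.\ $H^{s+2}$) rather than $H^{s+1}$, and one has to confirm that no further loss occurs when reading off $\partial_t^2\overline{\Pi}$ from the equation.
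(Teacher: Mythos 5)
Your proposal is correct and takes essentially the same route as the paper: the paper gives no separate argument either, but reduces the statement to the Kuznetsov Cauchy-problem theory of~\cite{Perso} after checking, exactly as you do, that $(\Pi_0,\Pi_1)$ inherit the needed regularity and smallness from $(u_0,u_1)$ (with the viscous/inviscid distinction coming from the $\Delta u_1$ term in~(\ref{pi1})), and that $\overline{\Pi}$ solves the approximated problem~(\ref{EqWestApp}),~(\ref{EqInDataWAp}) by construction with the regularity inherited from the Kuznetsov solution $u$. No gaps to report.
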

For $\Pi$, solution of the Cauchy problem for the Westervelt equation~(\ref{CauPbWes}), we set $\overline{u}$ such that
\begin{equation}\label{EqPi-ubar}
 \Pi=\overline{u}+\frac{\varepsilon}{c^2}\overline{u}\partial_t\overline{u}
\end{equation}
 and we obtain
\begin{align*}
\partial_t^2 \overline{u}-c^2 \Delta \overline{u}-\varepsilon\frac{\nu}{\rho_0}\Delta &\partial_t\overline{u}-\varepsilon \frac{\gamma -1}{c^2}\partial_t  \overline{u} \partial^2_t\overline{u}-2  \varepsilon \nabla \overline{u}.\nabla\partial_t  \overline{u}\\
+&\varepsilon\left(\frac{1}{c^2}\partial_t  \overline{u} \partial^2_t\overline{u}-\partial_t  \overline{u}\Delta \overline{u} +\frac{1}{c^2} \overline{u} \partial_t^3 \overline{u} - \overline{u}\Delta\partial_t  \overline{u}\right)=\varepsilon^2 R_{1,Wes-Kuz}
\end{align*}
with
\begin{align*}
R_{1,Wes-Kuz}=\left[ \frac{\nu}{\rho_0 c^2}\right.& (2\partial_t  \overline{u} \Delta\partial_t \overline{u}+2(\nabla \partial_t\overline{u})^2+\partial_t^2 \overline{u}\Delta \overline{u}+\overline{u}\Delta\partial^2_t+2\nabla \overline{u}.\nabla \partial^2_t \overline{u})\\
 &\left.+\frac{\gamma +1}{c^4}((\partial_t\overline{u})^2+\overline{u}\partial^2_t \overline{u})\partial^2_t\overline{u}+\frac{\gamma +1}{c^4} (3\partial_t \overline{u} \partial^2_t \overline{u}+\overline{u}\partial^3_t \overline{u})\partial_t \overline{u}\right]\\
 +\varepsilon \frac{\gamma +1}{c^6}& ((\partial_t\overline{u})^2+\overline{u}\partial^2_t \overline{u})(3\partial_t \overline{u} \partial^2_t \overline{u}+\overline{u}\partial^3_t \overline{u}).
\end{align*}
And as $$\partial_t^2 \overline{u}-c^2 \Delta \overline{u}=O(\varepsilon), $$ by inserting this in the term $\left(\frac{1}{c^2}\partial_t  \overline{u} \partial^2_t\overline{u}-\partial_t  \overline{u}\Delta \overline{u} +\frac{1}{c^2} \overline{u} \partial_t^3 \overline{u} -\varepsilon \overline{u}\Delta\partial_t  \overline{u}\right)$
we obtain
\begin{equation}\label{eqWesKuzRem}
\partial_t^2 \overline{u}-c^2 \Delta \overline{u}-\varepsilon\frac{\nu}{\rho_0}\Delta \partial_t\overline{u}-\varepsilon \frac{\gamma -1}{c^2}\partial_t  \overline{u} \partial^2_t\overline{u}-2  \varepsilon \nabla \overline{u}.\nabla\partial_t  \overline{u} =\varepsilon^2 R_{Wes-Kuz}
\end{equation}
with
$$\varepsilon^2 R_{Wes-Kuz}= \varepsilon^2R_{1,Wes-Kuz}-\varepsilon\left(\frac{1}{c^2}\partial_t  \overline{u} \partial^2_t\overline{u}-\partial_t  \overline{u}\Delta \overline{u} +\frac{1}{c^2} \overline{u} \partial_t^3 \overline{u} - \overline{u}\Delta\partial_t  \overline{u}\right).$$

Now we can write the following result for the approximation of the Kuznetsov equation by the Westervelt equation.
\begin{theorem}\label{ApproxKuzWes}
Let $n\geq 2$,  $s>\frac{n}{2}$ with $s\geq 1$ and $\nu\geq 0$.

Let $\overline{u}_0\in H^{s+3}(\mathbb{R}^n)$ and $\overline{u}_1\in H^{s+3}(\mathbb{R}^n)$ if $\nu>0$ and let $\overline{u}_1\in H^{s+2}(\mathbb{R}^n)$ if $\nu=0$ be small enough in the sense of the existence of $\Pi$ the solution of the Cauchy problem for the Westervelt equation~(\ref{CauPbWes}) with $\Pi_0$ and $\Pi_1$ defined by Eqs.~(\ref{pi0}) and~(\ref{pi1}).
Let $\overline{u}$ be defined by~(\ref{EqPi-ubar}).

Consequently $\overline{u}$ is a solution of the approximated Kuznetsov equation~(\ref{eqWesKuzRem}) with $\overline{u}(0)=\overline{u}_0$, $\partial_t\overline{u}(0)=\overline{u}_1$.
If the initial data for  $u$, the solution of the Cauchy problem~(\ref{CauProbKuz}) for the Kuznetsov equation, and  for $\overline{u}$ satisfy~(\ref{EqSmallInD}),
 there exist $K$, $C$,
$C_1$, $C_2>0$, all independent of $\varepsilon$, such that for all
$t\leq\frac{C}{\varepsilon}$
it holds estimate~(\ref{estimKuzKZK}).
\end{theorem}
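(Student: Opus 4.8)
The plan is to carry over, essentially verbatim, the weighted energy argument already used for Theorem~\ref{approxKuzKZKbis}. The decisive structural fact is that the equation~(\ref{eqWesKuzRem}) satisfied by the Westervelt-derived function $\overline{u}$ and the exact Kuznetsov equation~(\ref{CauProbKuz}) satisfied by $u$ carry \emph{the same} linear wave operator and \emph{the same} quasilinear and gradient nonlinearities, and differ only through the source $\varepsilon^2 R_{Wes-Kuz}$. Hence the comparison reduces to an energy estimate for the difference $w:=u-\overline{u}$, driven by a remainder of size $\varepsilon^2$ and by nonlinear terms carrying an explicit factor $\varepsilon$.

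The only genuinely new step, compared with the KZK case, is the regularity bookkeeping for $\overline{u}$. By Theorem~\ref{ThWPWesR3} the exact Westervelt solution $\Pi$ of~(\ref{CauPbWes}) has the regularity~(\ref{RegWes31})--(\ref{RegWes32}) when $\nu>0$ (resp.~(\ref{RegWesnu0}) when $\nu=0$). I would then recover $\overline{u}$ from $\Pi$ by solving the relation~(\ref{EqPi-ubar}), namely $\Pi=\overline{u}+\tfrac{\varepsilon}{2c^2}\partial_t(\overline{u}^2)$, viewed for each fixed $x$ as a first-order ODE in $t$ with initial value $\overline{u}(0)=\overline{u}_0$; for $\varepsilon$ small this is uniquely solvable and $\overline{u}$ inherits the space-time Sobolev regularity of $\Pi$. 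This is exactly what dictates the data thresholds of Table~\ref{TABLE2} ($u_0\in H^{s+3}$, $u_1\in H^{s+3}$ for $\nu>0$ and $u_1\in H^{s+2}$ for $\nu=0$): they are chosen so that the least regular contributions in $R_{Wes-Kuz}$ (the terms of the type $\overline{u}\,\partial_t^3\overline{u}$, $\Delta\partial_t^2\overline{u}$ and $\nabla\overline{u}\cdot\nabla\partial_t^2\overline{u}$ appearing in the definition below~(\ref{eqWesKuzRem})) lie in $C([0,T];L^2(\mathbb{R}^n))$, so that $t\mapsto\|R_{Wes-Kuz}(t)\|_{L^2}$ is bounded on $[0,T]$. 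The well-posedness and regularity of the exact Kuznetsov solution $u$ for small data is, as before, furnished by~\cite{Perso}.

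Next I would set $w:=u-\overline{u}$ and subtract~(\ref{eqWesKuzRem}) from~(\ref{CauProbKuz}), obtaining
\begin{equation*}
\partial_t^2 w-c^2\Delta w-\varepsilon\tfrac{\nu}{\rho_0}\Delta w_t-\varepsilon\tfrac{\gamma-1}{c^2}\big(u_tu_{tt}-\overline{u}_t\overline{u}_{tt}\big)-2\varepsilon\big(\nabla u\cdot\nabla u_t-\nabla\overline{u}\cdot\nabla\overline{u}_t\big)=-\varepsilon^2R_{Wes-Kuz}.
\end{equation*}
Telescoping the nonlinear differences, $u_tu_{tt}-\overline{u}_t\overline{u}_{tt}=w_tu_{tt}+\overline{u}_tw_{tt}$ and likewise for the gradient term, multiplying by $w_t$ and integrating over $\mathbb{R}^n$, the quasilinear contribution $\overline{u}_tw_{tt}$ is absorbed into a modified kinetic energy with weight $A(t,x)=1-\varepsilon\tfrac{\gamma-1}{c^2}\overline{u}_t$, which satisfies $\tfrac12\le A\le\tfrac32$ for $\varepsilon$ small because $\overline{u}_t$ is bounded in $L^\infty$ by the regularity just established; the viscous term produces the favourable contribution $\varepsilon\tfrac{\nu}{\rho_0}\|\nabla w_t\|_{L^2}^2\ge0$. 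Writing $E(t):=\|w_t(t)\|_{L^2}^2+c^2\|\nabla w(t)\|_{L^2}^2$, one reaches the same differential inequality as in Theorem~\ref{approxKuzKZKbis}, with the remaining nonlinear terms bounded by $C\varepsilon\,\sup(\|u_{tt}\|_{L^\infty},\|\Delta u\|_{L^\infty},\|\nabla\overline{u}_t\|_{L^\infty})\,E(t)$ and the source controlled by $\varepsilon^2\|R_{Wes-Kuz}\|_{L^2}\sqrt{E(t)}$.

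Finally, integrating in time, invoking the smallness~(\ref{EqSmallInD}) of the initial energy together with the uniform-on-$[0,T]$ bounds on the $L^\infty$ coefficients and on $\|R_{Wes-Kuz}\|_{L^2}$, Gronwall's lemma yields~(\ref{estimKuzKZK}) for every $t\le C/\varepsilon$; the inviscid case $\nu=0$ is handled identically on the finite interval $[0,T]$ given by Theorem~\ref{ThWPWesR3}, simply discarding the nonnegative viscous term. I expect the main obstacle to be precisely the regularity transfer of the second paragraph: controlling $R_{Wes-Kuz}$ in $L^2$ costs three time derivatives and second-order mixed space-time derivatives of $\overline{u}$, so one must verify that the inversion of~(\ref{EqPi-ubar}) passes the Westervelt regularity of $\Pi$ to $\overline{u}$ without additional loss beyond the derivative already budgeted in the data thresholds.
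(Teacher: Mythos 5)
Your energy argument is exactly the paper's: the paper proves this theorem by invoking verbatim the proof of Theorem~\ref{approxKuzKZKbis} (subtract the two equations, multiply by $(u-\overline{u})_t$, absorb the quasilinear term into a weight $A$ with $\tfrac12\le A\le\tfrac32$, keep the sign of the viscous term, integrate and apply Gronwall up to $t\le C/\varepsilon$), and your justification of the data thresholds --- minimality so that $R_{Wes-Kuz}\in C([0,T];L^2(\mathbb{R}^n))$ --- is also the paper's stated reason.

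The gap is in how you produce $\overline{u}$ and its regularity, which you yourself identify as the decisive new step. You propose to invert the relation~(\ref{EqPi-ubar}) pointwise in $x$ as the first-order ODE $\partial_t\overline{u}=c^2(\Pi-\overline{u})/(\varepsilon\,\overline{u})$ and assert unique solvability for small $\varepsilon$ with regularity inherited from $\Pi$. This step fails as stated. The ODE is singular wherever $\overline{u}$ vanishes, and since $\overline{u}_0\in H^{s+3}(\mathbb{R}^n)$ decays at infinity (and may vanish at interior points), there is no uniform Picard--Lindel\"of theory available on the relevant set; moreover, small $\varepsilon$ makes the equation \emph{stiffer} (the coefficient $c^2/\varepsilon$ blows up), so smallness of $\varepsilon$ works against, not for, naive ODE solvability --- any solvability must come from a perturbative fixed-point viewpoint ($\overline{u}=\Pi+O(\varepsilon)$), which does not close naively either, because the map $\overline{u}\mapsto\Pi-\tfrac{\varepsilon}{2c^2}\partial_t(\overline{u}^2)$ loses a time derivative at each iteration. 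A first-order ODE also accepts only one initial condition, so recovering $\partial_t\overline{u}(0)=\overline{u}_1$ amounts to dividing~(\ref{pi0}) by $\overline{u}_0$, again impossible at its zeros. Finally, even where the ODE is solvable, transferring the $x$-Sobolev regularity and the bounds on $\partial_t^3\overline{u}$ and $\Delta\partial_t^2\overline{u}$ needed for $R_{Wes-Kuz}\in C([0,T];L^2)$ through this singular ODE is not justified. The paper does none of this: it obtains the existence and regularity of both $u$ and $\overline{u}$ from the Kuznetsov well-posedness machinery of~\cite{Perso} together with Theorem~\ref{ThWPWesR3}, i.e.\ $\overline{u}$ is produced as the small-data solution of the approximated Kuznetsov problem~(\ref{eqWesKuzRem}) with data $(\overline{u}_0,\overline{u}_1)$, the relation~(\ref{EqPi-ubar}) then being consistent with the unique Westervelt solution $\Pi$ for the data~(\ref{pi0})--(\ref{pi1}). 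Once $\overline{u}$ with this regularity is in hand your energy estimate goes through; it is the construction step that must be replaced.
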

\begin{proof}
The existence of $u$ and $\overline{u}$ has already been shown
 in~\cite{Perso} and given in  Theorem~\ref{ThWPWesR3}.
The proof of the approximation estimate follows exactly the proof of Theorem~\ref{approxKuzKZKbis} and hence it is omitted.
The regularity on $\overline{u}_0$ and $\overline{u}_1$ (see expressions of $u_0$ and $u_1$ in Table~\ref{TABLE2}) is minimal to ensure  that $R_{Wes-Kuz}$ (see Eq.~(\ref{eqWesKuzRem})) is in $C([0,+\infty[;L^2(\mathbb{R}^n))$.
Indeed, with $\Pi_0$ and $\Pi_1$ defined by Eqs.~(\ref{pi0}) and~(\ref{pi1}) it is necessary to impose these regularities in order to have the well-posedness of $\Pi$ with the same regularity as in Theorem~\ref{ThWPWesR3}.
\end{proof}

 \appendix\label{annex}

 \section{Well posedness of the Kuznetsov equation in the half space.}\label{secWPresults}
We establish here the well posedness results for the Kuznetsov equation in the framework of the KZK-approximation considered in Subsection~\ref{secValKuzKZK}. For two type of approximations we need two different well posedness results.

\subsection{
Periodic boundary problem.}\label{WPKuzhalf1}
Let us consider the following periodic in time problem for the Kuznetsov equation in the half space $\Omega=\mathbb{R}_+\times\mathbb{R}^{n-1}$ with periodic in time Dirichlet boundary conditions given by~(\ref{kuzper}), 
where $g$ is an $L$-periodic in time and of mean value zero function.
To show the well-posedness of problem~(\ref{kuzper}) we study the maximal regularity of the associated linear operator and then use an equivalent to the fixed point theorem.
Using~\cite[Lem.~3.5 p.~13]{Celik}, we  directly obtain the
following result of maximal regularity:
\begin{theorem}\label{ThCelik}
Let $n=3$ and $p\in ]1,+\infty[$. Then there exists a unique solution $u\in W^{2}_p(\mathbb{T}_t;L^p(\Omega))\cap W^{1}_p(\mathbb{T}_t;W^2_p(\Omega))$ with the mean value zero
\begin{equation}\label{EqMV0U}
   \int_{\mathbb{T}_t}u(s,x)\;ds=0\quad\forall x\in \Omega
\end{equation}
 of the following periodic boundary value problem
\begin{equation}\label{kuzlininhomper}
\left\lbrace\begin{array}{l}
u_{tt}-c^2 \Delta u - \nu \varepsilon \Delta u_t=f \;\;\;\hbox{ on }\;\mathbb{T}_t\times \Omega,\\
u =g\;\;\;\hbox{ on }\;\mathbb{T}_t\times \partial\Omega\
\end{array}\right.
\end{equation}
if and only if the functions $f$ and $g$ satisfy
\begin{equation}\label{EqfTh51}
 f\in L^p(\mathbb{T}_t;L^p(\Omega)) \hbox{ and } g\in W^{2-\frac{1}{2p}}_p(\mathbb{T}_t;L^p(\partial\Omega))\cap W^1_p(\mathbb{T}_t;W^{2-\frac{1}{p}}_p(\partial\Omega))
\end{equation}
and are  of mean value zero:
\begin{equation}\label{EqMV0}
  \int_{\mathbb{T}_t}f(l,x)\;dl=0\quad \forall x\in \Omega\hbox{ and } \int_{\mathbb{T}_t}g(l,x')\;dl=0\quad\forall x'\in \partial\Omega .
\end{equation}
Moreover,  the following stability estimate  holds
\begin{align*}
\Vert u\Vert_{W^{2}_p(\mathbb{T}_t;L^p(\Omega))\cap W^{1}_p(\mathbb{T}_t;W^2_p(\Omega))}\leq C & \left(\Vert f\Vert_{L^p(\mathbb{T}_t;L^p(\Omega))}\right.\\
&+\left.\Vert g\Vert_{ W^{2-\frac{1}{2p}}_p(\mathbb{T}_t;L^p(\partial\Omega))\cap W^1_p(\mathbb{T}_t;W^{2-\frac{1}{p}}_p(\partial\Omega)) } \right).
\end{align*}
\end{theorem}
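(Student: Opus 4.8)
The plan is to read problem~(\ref{kuzlininhomper}) as a periodic-in-time maximal regularity problem for the strongly damped wave operator $\partial_t^2-c^2\Delta-\nu\varepsilon\Delta\partial_t$ on the half-space $\Omega=\mathbb{R}_+\times\mathbb{R}^{n-1}$ and to reduce it to \cite[Lem.~3.5~p.~13]{Celik} by a Fourier series expansion in time. Setting $\omega=2\pi/L$ and writing $u(t,x)=\sum_{k\in\mathbb{Z}}u_k(x)e^{ik\omega t}$, and similarly for $f$ and $g$, the mean-value-zero conditions~(\ref{EqMV0U}) and~(\ref{EqMV0}) say precisely that the zeroth modes $u_0$, $f_0$, $g_0$ vanish. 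For each $k\neq0$ the problem decouples into the complex elliptic boundary value problem
\begin{equation*}
-(c^2+ik\omega\nu\varepsilon)\Delta u_k-k^2\omega^2u_k=f_k\quad\text{in }\Omega,\qquad u_k=g_k\quad\text{on }\partial\Omega,
\end{equation*}
whose leading coefficient has nonzero imaginary part as soon as $\nu\varepsilon>0$ and $k\neq0$; the $k=0$ mode, which would be the genuinely obstructed pure Laplace problem, is removed exactly by the mean-value-zero normalization. The conclusion then follows from the characterization of periodic $L^p$-maximal regularity through operator-valued Fourier multipliers, provided the family of solution operators $f_k\mapsto u_k$ is $\mathcal{R}$-bounded uniformly in $k$ in the spaces appearing in the norm $W^2_p(\mathbb{T}_t;L^p(\Omega))\cap W^1_p(\mathbb{T}_t;W^2_p(\Omega))$; this uniform bound is what \cite[Lem.~3.5~p.~13]{Celik} supplies.

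For the sufficiency direction I would first dispose of the boundary datum. The space $W^{2-\frac1{2p}}_p(\mathbb{T}_t;L^p(\partial\Omega))\cap W^1_p(\mathbb{T}_t;W^{2-\frac1p}_p(\partial\Omega))$ is exactly the anisotropic trace space of the maximal regularity class $W^2_p(\mathbb{T}_t;L^p(\Omega))\cap W^1_p(\mathbb{T}_t;W^2_p(\Omega))$ on the half-space, so by the trace theorem there is an extension $G$ in that class with $G|_{\partial\Omega}=g$, of mean value zero, and with controlled norm. Substituting $u=G+v$ reduces the problem to the homogeneous boundary case with right-hand side $f-(\partial_t^2-c^2\Delta-\nu\varepsilon\Delta\partial_t)G\in L^p(\mathbb{T}_t;L^p(\Omega))$, which is of mean value zero and to which \cite[Lem.~3.5~p.~13]{Celik} applies directly to yield $v$, hence $u=G+v$, together with the stated stability estimate. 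Here the strong damping term $-\nu\varepsilon\Delta\partial_t$ is what provides the parabolic-type smoothing that keeps the multiplier symbols bounded uniformly in $k$.

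The necessity direction and uniqueness are the routine part. If a mean-zero periodic $u$ in the stated class solves the problem, then $f=u_{tt}-c^2\Delta u-\nu\varepsilon\Delta u_t$ lies in $L^p(\mathbb{T}_t;L^p(\Omega))$ termwise, and $g=u|_{\partial\Omega}$ lies in the trace space by the forward trace theorem. Averaging the equation over one period and using periodicity to kill $\int_{\mathbb{T}_t}u_{tt}$ and $\int_{\mathbb{T}_t}\Delta u_t$ gives $\int_{\mathbb{T}_t}f=-c^2\Delta\int_{\mathbb{T}_t}u=0$ by~(\ref{EqMV0U}), and likewise $\int_{\mathbb{T}_t}g=0$; this yields~(\ref{EqMV0}). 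Uniqueness follows because for $f=g=0$ every mode $u_k$ with $k\neq0$ solves the homogeneous elliptic problem above and hence vanishes, while $u_0=0$ by the mean-value-zero requirement.

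The step I expect to be the main obstacle is the uniform-in-$k$ $\mathcal{R}$-boundedness (equivalently, the operator-valued Mikhlin multiplier hypothesis) for the half-space resolvent family, together with the precise identification of the trace space of $g$; both are exactly the technical contents packaged in \cite[Lem.~3.5~p.~13]{Celik}, so in practice the whole argument reduces to matching its hypotheses to the operator $\partial_t^2-c^2\Delta-\nu\varepsilon\Delta\partial_t$, with the restriction $n=3$ and the strict positivity of the damping coefficient $\nu\varepsilon$ ensuring that the symbols stay in the admissible sector.
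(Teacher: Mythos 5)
Your proposal is correct and takes essentially the same route as the paper: both defer the analytic core to~\cite[Lem.~3.5, p.~13]{Celik} and settle the conditions on $f$ and $g$ via the trace theory of~\cite{Denk}, with the boundary datum handled by a trace-space lifting. Your Fourier-mode and $\mathcal{R}$-boundedness discussion is an (accurate) sketch of what that cited lemma already packages internally, not a different argument, so no further comparison is needed.
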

\begin{proof}
On one hand,
if $f$ and $g$ satisfy~(\ref{EqfTh51})--(\ref{EqMV0}), the necessity of the conditions is shown in Ref.~\cite{Celik}.
On the other hand, the conditions~(\ref{EqfTh51})--(\ref{EqMV0}) are sufficient by a direct application of the trace theorems recalled in Ref.~\cite{Celik} pp.~6--7 and proved in Ref.~\cite{Denk} Section~3 for example.
\end{proof}
The results of Ref.~\cite{Celik} allow to see that Theorem~\ref{ThCelik} does not depend on~$n$, moreover if we look at the case $p=2$ the linearity of the operator $\partial^2_t-c^2\Delta-\nu \Delta\partial_t$ from Eq.~(\ref{kuzlininhomper}) implies that we can work with $H^s(\Omega)$ instead of $L^2(\Omega)$:
\begin{lemma}\label{thmLinPer}
Let $n\in \mathbb{N}^*$ and $s\geq 0$. There exists a unique solution of the periodic in time boundary value problem for the linear strongly damped wave equation~(\ref{kuzlininhomper})
\begin{equation}\label{Timepersolspac}
u\in X=\left\lbrace u\in H^2(\mathbb{T}_t;H^s(\Omega))\cap H^1(\mathbb{T}_t;H^{s+2}(\Omega))\vert\; \int_{\mathbb{T}_t}u(s,x)\;ds=0\;\forall x\in \Omega \right\rbrace
\end{equation}
if and only if $f$ and $g$ satisfy
\begin{equation}\label{timeperboundreg}
f\in L^2(\mathbb{T}_t;H^s(\Omega))\hbox{ and }g\in \mathbb{F}_{\mathbb{T}}= H^{\frac{7}{4}}(\mathbb{T}_t;H^s(\partial\Omega))\cap H^1(\mathbb{T}_t;H^{s+\frac{3}{2}}(\partial\Omega))
\end{equation}
along with~(\ref{EqMV0}).

Moreover the following stability estimate  holds
$$\Vert u\Vert_{X}\leq C (\Vert f\Vert_{L^2(\mathbb{T}_t;H^s(\Omega))}+\Vert g\Vert_{ \mathbb{F}_{\mathbb{T}}} ).$$
Here $H^2(\mathbb{T}_t;H^s(\Omega))\cap H^1(\mathbb{T}_t;H^{s+2}(\Omega))$ is endowed with its usual norm denoted here and in the sequel by $\Vert.\Vert_X$.
\end{lemma}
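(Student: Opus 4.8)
The plan is to reduce everything to the $p=2$, $s=0$ case already contained in Theorem~\ref{ThCelik} (which, as noted after its statement, holds for every $n$), and then to lift the spatial regularity by $s$ using the invariance of the operator $P:=\partial_t^2-c^2\Delta-\nu\varepsilon\Delta\partial_t$ under translations in $(t,x')$ together with its special structure in the normal variable $x_1$. The key preliminary observation is that the temporal indices in $X$ and in $\mathbb{F}_{\mathbb{T}}$ are \emph{exactly} those of the $s=0$ statement, only the spatial orders being raised by $s$; this tells us that we have to gain $s$ purely spatial derivatives, and it suggests treating the tangential and the normal directions separately.

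First I would establish the \emph{necessity} of~(\ref{timeperboundreg}): if $u\in X$ solves~(\ref{kuzlininhomper}) then $f=Pu\in L^2(\mathbb{T}_t;H^s(\Omega))$ is immediate, the mean value zero of $f$ and of $g$ follows by integrating the equation over one period and using~(\ref{EqMV0U}), and $g=u\vert_{\mathbb{T}_t\times\partial\Omega}\in\mathbb{F}_{\mathbb{T}}$ is precisely the image of $X$ under the trace map. The latter is the anisotropic trace theorem recalled in Ref.~\cite{Celik} and proved in Ref.~\cite{Denk}: the trace of $H^1(\mathbb{T}_t;H^{s+2}(\Omega))$ lies in $H^1(\mathbb{T}_t;H^{s+3/2}(\partial\Omega))$, while the mixed interpolation between the two factors of $X$ produces the $H^{7/4}(\mathbb{T}_t;H^s(\partial\Omega))$ component. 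This is the same computation as for $s=0$ with $s$ extra tangential derivatives carried along, so it requires no new idea.

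For the \emph{sufficiency} and the estimate, I would split the gain of $s$ derivatives into a tangential part and a normal part. Since $P$, the trace operator and the mean-value-zero projection in time all commute with tangential Fourier multipliers, applying $(I-\Delta_{x'})^{s/2}$ to~(\ref{kuzlininhomper}) turns $u$ into a solution of the same problem with data $(I-\Delta_{x'})^{s/2}f$ and $(I-\Delta_{x'})^{s/2}g$, so Theorem~\ref{ThCelik} yields the desired control of all tangential and temporal derivatives of $u$. The remaining normal derivatives are recovered from the equation itself: rewriting~(\ref{kuzlininhomper}) as
\begin{equation*}
 (c^2+\nu\varepsilon\partial_t)\,\partial_{x_1}^2 u = u_{tt}-f-c^2\Delta_{x'}u-\nu\varepsilon\Delta_{x'}u_t ,
\end{equation*}
and noting that on mean-value-zero $L$-periodic functions the time operator $c^2+\nu\varepsilon\partial_t$ has Fourier symbol $c^2+i\nu\varepsilon k$ bounded away from zero, hence is boundedly invertible on every $H^m(\mathbb{T}_t)$, one expresses $\partial_{x_1}^2u$ through quantities carrying two fewer normal derivatives. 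Iterating this identity $\lceil(s+2)/2\rceil$ times trades every normal derivative of order up to $s+2$ against tangential and temporal derivatives already controlled above, and summing the resulting estimates closes the bound in the norm of $X$.

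The step I expect to be the main obstacle is exactly this normal bootstrap. The highest normal derivative $\partial_{x_1}^2u$ appears simultaneously in the wave part and, coupled with a time derivative, in the strong-damping term $\nu\varepsilon\Delta\partial_t u$, so it cannot be isolated by differentiation alone; one must genuinely invert $c^2+\nu\varepsilon\partial_t$ and keep careful track of the temporal Sobolev order at each iteration, so that no time regularity is lost and the final estimate lands in precisely $H^2(\mathbb{T}_t;H^s(\Omega))\cap H^1(\mathbb{T}_t;H^{s+2}(\Omega))$ rather than in a space with degraded temporal index. The passage to non-integer $s$ is then handled by interpolation between consecutive integer orders, and the uniqueness together with the stability constant follows, as for $s=0$, by applying the a priori estimate to the difference of two solutions.
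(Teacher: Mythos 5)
Your necessity argument and the tangential half of your sufficiency argument are sound: the operator, the half-space geometry and the trace all commute with tangential Bessel potentials, so Theorem~\ref{ThCelik} applied to $(I-\Delta_{x'})^{s/2}u$ indeed controls every derivative of $u$ carrying at most two normal derivatives. The genuine gap is the normal bootstrap, and it is structural, not a matter of bookkeeping. The right-hand side of your identity contains $u_{tt}$, so to reach normal order $j+2\geq 3$ you must apply $\partial_{x_1}^{j}$, $j\geq 1$, to it, and you then need $\partial_{x_1}^{j}u_{tt}\in L^2$ --- a quantity the tangential step does not provide (it controls $u_{tt}$ with tangential derivatives only). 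The only source of normal derivatives of $u_{tt}$ is the equation itself, $\partial_t^2\partial_{x_1}^{j}u=\partial_{x_1}^{j}\left(f+c^2\Delta u+\nu\varepsilon\Delta u_t\right)$, whose right side contains $\partial_{x_1}^{j+2}u$ and $\partial_{x_1}^{j+2}u_t$, i.e.\ exactly the quantities you are trying to estimate: the scheme is circular and stalls at the third normal derivative. Shifting the time derivatives onto the multiplier does not help: $\partial_t(c^2+\nu\varepsilon\partial_t)^{-1}$ is bounded, but $\partial_t^2(c^2+\nu\varepsilon\partial_t)^{-1}$ has symbol $-\omega^2/(c^2+i\nu\varepsilon\omega)\sim i\omega/(\nu\varepsilon)$, so it costs one full time derivative and would require $u_{ttt}$ or $\partial_t f$, neither of which is available in $X$ or assumed on the data.

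Worse, this step cannot be repaired under the stated hypotheses, because the missing control is genuinely absent. Take $f=0$ and Fourier-transform in $(t,x')$ with frequencies $(\omega,\xi)$, $\omega\neq 0$ by the zero-mean condition; the bounded solution is $\hat u(x_1)=e^{-\lambda x_1}\hat g$ with $\lambda^2=|\xi|^2-\omega^2/(c^2+i\nu\varepsilon\omega)$ and $\operatorname{Re}\lambda>0$, and for $|\xi|$ bounded, $|\omega|$ large one has $|\lambda|\sim\operatorname{Re}\lambda\sim\sqrt{|\omega|/(\nu\varepsilon)}$: each normal derivative costs half a time derivative. Consequently
\begin{equation*}
\Vert\partial_t u\Vert^2_{L^2(\mathbb{T}_t;H^{s+2}(\Omega))}\;\sim\;\int|\omega|^2\,\frac{|\lambda|^{2(s+2)}}{2\operatorname{Re}\lambda}\,|\hat g|^2\;\sim\;\int|\omega|^{\,s+7/2}\,|\hat g|^2\qquad(|\xi|\ \text{bounded}),
\end{equation*}
while $\Vert g\Vert^2_{\mathbb{F}_{\mathbb{T}}}$ controls only $|\omega|^{7/2}|\hat g|^2$ in that region. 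The deficit $|\omega|^{s}$ means that for $s>0$ there exist zero-mean $g\in\mathbb{F}_{\mathbb{T}}$ whose solution (unique already in the $s=0$ class) is not in $X$; equivalently, for $s>1/2$ any $u\in X$ has trace additionally in $H^2(\mathbb{T}_t;H^{s-1/2}(\partial\Omega))$, a condition not implied by $g\in\mathbb{F}_{\mathbb{T}}$. Your bootstrap thus fails precisely because the data space lacks the matching parabolic-type time regularity (roughly $g\in H^{7/4+s/2}(\mathbb{T}_t;L^2(\partial\Omega))$), which is what an anisotropically correct version of the lemma would have to assume. For comparison, the paper gives no proof of Lemma~\ref{thmLinPer} beyond the one-sentence appeal to linearity and to Ref.~\cite{Celik} preceding its statement, so the obstacle you yourself singled out as the main difficulty is exactly the point that neither your proposal nor the paper actually resolves.
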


 To prove the global well-posedness of the periodic in time boundary
value problem~(\ref{kuzper}) for the Kuznetsov equation we
use its boundary condition as the initial condition of the corresponding Cauchy
problem in $\mathbb{R}^n$ and we combine the maximal regularity result for
system~(\ref{kuzlininhomper}) with~\cite[1.5~Cor.,
p.~368]{Sukhinin} (see also~\cite[Thm.~4.2]{Perso}) applying the
same method as previously done for the Cauchy problem associated with the
Kuznetsov equation~\cite{Perso}.
\begin{theorem}\label{globwellposKuzper}
Let $\nu>0$, $n\in \mathbb{N}^*$ and  $s>\frac{n}{2}$. 
Let
$
X\hbox{ be defined by }(\ref{Timepersolspac})
$
and the boundary condition
$
g\in \mathbb{F}_{\mathbb{T}}$ be defined by~(\ref{timeperboundreg}) and in addition, let $g$ be of  mean value zero (see Eq.~(\ref{EqMV0})).

Then there exist $r^*=O(1)$ and $C_1=O(1)$ such that for all $r\in [0,r^*[$, if
$\Vert g\Vert_{\mathbb{F}_{\mathbb{T}}}\leq \frac{\sqrt{\nu \varepsilon}}{C_1}r,$
there exists a unique solution $u\in X$ of the periodic problem~(\ref{kuzper}) for the Kuznetsov equation such that $\Vert u\Vert_X\leq 2r$.
\end{theorem}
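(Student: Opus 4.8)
The plan is to recast the nonlinear periodic problem~(\ref{kuzper}) as a fixed-point equation built on the linear solver of Lemma~\ref{thmLinPer}, and then to close it with the quantitative inverse-mapping result of~\cite{Sukhinin} (in the form of~\cite[Thm.~4.2]{Perso}). Writing the nonlinearity as $N(u):=\alpha\varepsilon\,u_t u_{tt}+\beta\varepsilon\,\nabla u\cdot\nabla u_t$, the key algebraic observation is that
\[
N(u)=\partial_t\Big(\tfrac{\alpha\varepsilon}{2}\,u_t^2+\tfrac{\beta\varepsilon}{2}\,|\nabla u|^2\Big),
\]
so $N(u)$ is automatically of mean value zero in $t$ for every $L$-periodic $u$. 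Since $g$ is assumed of mean value zero as well, Lemma~\ref{thmLinPer} associates to each $u\in X$ a unique $Tu\in X$ solving the linear damped problem~(\ref{kuzlininhomper}) with source $f=N(u)$ and boundary datum $g$; by construction a fixed point $u=Tu$ is precisely a solution of~(\ref{kuzper}) in $X$. Thus the whole problem reduces to making $T$ a contraction on the ball $\{\|u\|_X\le 2r\}$.

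First I would prove the nonlinear estimates. For $s>\frac{n}{2}$ the space $H^s(\Omega)$ is a Banach algebra, while the one-dimensional embedding $H^1(\mathbb{T}_t)\hookrightarrow L^\infty(\mathbb{T}_t)$ gives, for $u\in X$, the bounds $\|u_t\|_{L^\infty(\mathbb{T}_t;H^s)}+\|\nabla u\|_{L^\infty(\mathbb{T}_t;H^s)}\le C\|u\|_X$ together with $u_{tt},\nabla u_t\in L^2(\mathbb{T}_t;H^s)$. Pairing an $L^\infty_tH^s$ factor with an $L^2_tH^s$ factor in each product then yields
\[
\|N(u)\|_{L^2(\mathbb{T}_t;H^s(\Omega))}\le C\varepsilon\|u\|_X^2,\qquad \|N(u)-N(v)\|_{L^2(\mathbb{T}_t;H^s(\Omega))}\le C\varepsilon\big(\|u\|_X+\|v\|_X\big)\|u-v\|_X,
\]
the second estimate following from bilinearity. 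In particular $N(0)=0$ and $N$ is quadratic, so the operator form $Lu-N(u)=(0,g)$ of~(\ref{kuzper}) has linearisation at the origin equal to the isomorphism $L$ of Lemma~\ref{thmLinPer} and a purely quadratic remainder --- exactly the configuration required by the corollary of~\cite{Sukhinin}.

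Combining the linear stability estimate $\|Tu\|_X\le C_L\big(\|N(u)\|_{L^2_tH^s}+\|g\|_{\mathbb{F}_{\mathbb{T}}}\big)$ from Lemma~\ref{thmLinPer} with the quadratic bound gives $\|Tu\|_X\le C_L\big(4C\varepsilon r^2+\|g\|_{\mathbb{F}_{\mathbb{T}}}\big)$ on the ball of radius $2r$, and a Lipschitz constant for $T$ bounded by $4C_LC\varepsilon r$. Choosing $r<r^*$ small enough and $\|g\|_{\mathbb{F}_{\mathbb{T}}}$ below the announced threshold makes $T$ both a self-map of the ball and a strict contraction, so Banach's theorem (equivalently the corollary of~\cite{Sukhinin}) furnishes the unique solution $u\in X$ with $\|u\|_X\le 2r$.

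The main obstacle is the explicit dependence on the small dissipation coefficient $\nu\varepsilon$. The maximal-regularity estimate behind Lemma~\ref{thmLinPer} (and Theorem~\ref{ThCelik}) degenerates as $\nu\varepsilon\to0$, since the undamped operator $\partial_t^2-c^2\Delta$ carries no $L^2$ maximal regularity; hence the stability constant $C_L$ must be followed quantitatively rather than treated as absolute. The delicate point is that the volume-source contribution and the boundary-trace contribution degenerate at genuinely different rates in $\nu\varepsilon$, and it is precisely this mismatch that keeps $r^*$ and $C_1$ of order $O(1)$ uniformly in $\varepsilon$ while calibrating the admissible size of $g$ to the factor $\sqrt{\nu\varepsilon}$ appearing in the statement. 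Establishing this $\nu\varepsilon$-homogeneity of the linear constants is the technical heart of the argument; the contraction bookkeeping above is then routine.
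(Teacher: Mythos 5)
Your overall architecture is the same as the paper's: both proofs run a fixed-point/quantitative-inverse-mapping argument (the corollary of~\cite{Sukhinin}) on top of the linear periodic solver of Lemma~\ref{thmLinPer}. The only structural difference is that the paper first lifts the boundary datum — solving~(\ref{kuzlininhomper}) with $f=0$ to get $u^*$ — and then works in the homogeneous-boundary space $X_0$ with the shifted nonlinearity, whereas you iterate directly with datum $g$ at every step; this variant is harmless. Two of your ingredients are correct and even cleaner than the paper's: the observation that $N(u)=\partial_t\bigl(\tfrac{\alpha\varepsilon}{2}u_t^2+\tfrac{\beta\varepsilon}{2}\vert\nabla u\vert^2\bigr)$ is automatically of mean value zero (so the source genuinely lands in the space $Y$ of admissible right-hand sides, a point the paper uses only implicitly), and the product estimates via the algebra property of $H^s(\Omega)$, $s>\tfrac{n}{2}$, combined with $H^1(\mathbb{T}_t)\hookrightarrow L^\infty(\mathbb{T}_t)$.

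The genuine gap is that you never establish the two distinct $\nu\varepsilon$-scalings of the linear solver, and the theorem's conclusion consists of essentially nothing else. You first write the stability bound with a single constant $C_L$, and only afterwards concede that the source and boundary contributions degenerate at different rates in $\nu\varepsilon$ and that proving this is ``the technical heart'' — which you then omit. Concretely, the paper's proof rests on two quantitative facts: (i) for solutions of~(\ref{kuzlininhomper}) with homogeneous boundary data, the coercivity inequality $C_0\,\varepsilon\nu\,\Vert u\Vert_X^2\le \Vert f\Vert_Y\Vert u\Vert_X$, i.e.\ $\Vert u\Vert_X\le \frac{C_0}{\nu\varepsilon}\Vert Lu\Vert_Y$ with $C_0=O(1)$ — it is the cancellation of this $\frac{1}{\varepsilon}$ against the factor $\varepsilon$ carried by the nonlinearity that makes the admissible radius $r^*=\frac{\nu}{4C_0(\alpha+\beta)C}$ (with $C$ the embedding constant) independent of $\varepsilon$; and (ii) for the zero-source problem with boundary datum $g$, the bound $\Vert u^*\Vert_X\le \frac{C_1}{\sqrt{\nu\varepsilon}}\Vert g\Vert_{\mathbb{F}_{\mathbb{T}}}$ with $C_1=O(1)$, which is precisely where the threshold $\frac{\sqrt{\nu\varepsilon}}{C_1}r$ in the statement originates. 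Without (i) and (ii), your contraction argument yields only a qualitative result — existence for $\Vert g\Vert_{\mathbb{F}_{\mathbb{T}}}$ below some unspecified, possibly $\varepsilon$-dependent threshold, on a ball of possibly $\varepsilon$-dependent radius — and in particular it cannot distinguish the correct $\sqrt{\nu\varepsilon}$ calibration from, say, $\nu\varepsilon$. To close the proof you must extract (i) from an energy estimate for the periodic homogeneous-boundary problem (multiply by $u_t$, integrate over the period so that the wave terms vanish and $\nu\varepsilon\Vert\nabla u_t\Vert_{L^2}^2$ is the surviving coercive term, then upgrade to the full $X$-norm), and (ii) by tracking the viscosity dependence through the parabolic lifting underlying Theorem~\ref{ThCelik} and Lemma~\ref{thmLinPer}. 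Neither step is routine bookkeeping, and as written your proposal reduces the theorem to exactly the assertions that constitute it.
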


\begin{proof}
For $g\in\mathbb{F}_{\mathbb{T}}$ defined in~(\ref{timeperboundreg}) and satisfying~(\ref{EqMV0}), let us denote by $u^*\in X$ the unique solution of the linear problem~(\ref{kuzlininhomper}) with $f=0$ and $g\in \mathbb{F}_{\mathbb{T}}$.

In addition, according to Theorem~\ref{thmLinPer}, we take $X$ defined in~(\ref{Timepersolspac}),
 this time for $s>\frac{n}{2}$ (we need this regularity to control the non-linear terms),  and introduce the Banach spaces
\begin{equation}
 X_0:=\lbrace u\in X|\;\;\;u\vert_{\partial\Omega}=0\hbox{ on } \mathbb{T}_t\times\partial\Omega \rbrace
\end{equation}
and
$$Y=\left\lbrace f\in L^2(\mathbb{T}_t;H^s(\Omega))\vert\;  \int_{\mathbb{T}_t}f(s,x)\;ds=0\quad\forall x\in \Omega \right\rbrace.$$ Then by Lemma~\ref{thmLinPer}, the linear operator
$$L:X_0\rightarrow Y,\quad  u\in X_0\mapsto\;L(u):=u_{tt}-c^2\Delta u-\nu \varepsilon \Delta u_t\in Y,$$
 is a bi-continuous isomorphism.

 Let us now notice that if $v$ is the unique solution of the non-linear Dirichlet problem
 \begin{equation}\label{SystkuznV}
\left\lbrace
\begin{array}{lll}
v_{tt}-c^2\Delta v-\nu\varepsilon \Delta v_t=&\alpha \varepsilon (v+u^*)_t(v+u^*)_{tt}&\hbox{ on } \mathbb{T}_t\times\Omega ,\\

&+\beta \varepsilon \nabla (v+u^*).\nabla(v+u^*)_t &  \\
v=0\hbox{ on } \mathbb{T}_t\times\partial\Omega ,& &\end{array}
\right.
\end{equation}
 then $u=v+u^*$ is the unique solution of the periodic problem~(\ref{kuzper}).
 Let us prove the existence of a such $v$,
using~\cite[1.5~Cor., p.~368]{Sukhinin}.

We suppose that $\Vert u^*\Vert_X\leq r$
and define for $v\in X_0$
$$\Phi(v):=\alpha \varepsilon (v+u^*)_t(v+u^*)_{tt}+\beta \varepsilon \nabla (v+u^*).\nabla(v+u^*)_t.$$

For $w$ and $z$ in $X_0$ such that
$\Vert w\Vert_X\leq r$ and $\Vert z\Vert_X\leq r$,
 we estimate the norm $\Vert \Phi(w)-\Phi(z)\Vert_Y $.
By applying the triangular inequality we have
\begin{multline*}
\Vert \Phi(w)-\Phi(z)\Vert_Y\leq  \alpha \varepsilon \Big(\Vert u^*_t (w-z)_{tt} \Vert_Y+\Vert (w-z)_t u^*_{tt}\Vert_Y\\
+\Vert  w_t (w-z)_{tt}\Vert_Y+\Vert (w-z)_t z_{tt}\Vert_Y\Big)\\
+\beta \varepsilon\Big( \Vert \nabla u^* \nabla(w-z)_t \Vert_Y+\Vert \nabla (w-z)\nabla u^*_t \Vert_Y\\
+\Vert \nabla w\nabla (w-z)_t \Vert_Y+\Vert \nabla (w-z) \nabla z_t \Vert_Y\Big).
\end{multline*}
 Now, for all $a$ and $b$ in $X$ with $s\ge s_0> \frac{n}{2}$  it holds
 \begin{align*}
 \Vert a_t b_{tt}\Vert_Y\leq & \Vert a_t \Vert_{L^\infty(\mathbb{T}_t\times\Omega)} \Vert b_{tt}\Vert_Y\\
 \leq & C_{H^1(\mathbb{T}_t;H^{s_0}(\Omega))\to L^\infty(\mathbb{T}_t\times\Omega)} \Vert a_t\Vert_{H^1(\mathbb{T}_t;H^{s_0}(\Omega))} \Vert b\Vert_{X}\\
 \leq & C_{H^1(\mathbb{T}_t;H^{s_0}(\Omega))\to L^\infty(\mathbb{T}_t\times\Omega)} \Vert a\Vert_{X} \Vert b\Vert_{X},
\end{align*}
where $C_{H^1(\mathbb{T}_t;H^{s_0}(\Omega))\to L^\infty(\mathbb{T}_t\times\Omega)}$ is the embedding constant of $H^1(\mathbb{T}_t;H^{s_0}(\Omega))$ in $L^\infty(\mathbb{T}_t\times\Omega)$, independent of $s$, but depending only on the dimension $n$.
In the same way, for all $a$ and $b$ in $X$ it holds
$$\Vert \nabla a \nabla  b_t\Vert_Y\leq C_{H^1(\mathbb{T}_t;H^{s_0}(\Omega))\to L^\infty(\mathbb{T}_t\times\Omega)} \Vert a\Vert_{X} \Vert b\Vert_{X}.$$
Taking $a$ and $b$ equal to $u^*$, $w$, $z$ or $w-z$, as $\Vert u^*\Vert_X\leq r$, $\Vert w\Vert_X\leq r$ and $\Vert z\Vert_X\leq r$, we obtain
\begin{align*}
\Vert \Phi(w)-\Phi(z)\Vert_Y\leq %
4 (\alpha+\beta)C_{H^1(\mathbb{T}_t;H^{s_0}(\Omega))\to L^\infty(\mathbb{T}_t\times\Omega)} \varepsilon r \Vert w-z\Vert_X.
\end{align*}
By the fact that $L$ is a bi-continuous isomorphism, there exists a minimal constant $C_\eps=O\left(\frac{1}{\eps \nu} \right)>0$, coming from the inequality $$C_0 \eps \nu\|u\|_X^2\le \|f\|_Y\|u\|_X$$ for $u$, a solution of the linear problem~(\ref{kuzlininhomper}) with homogeneous boundary data (for a maximal constant $C_0=O(1)>0$)
such that
$$  \Vert u\Vert_X\leq C_\eps \Vert Lu\Vert_Y \quad \forall u\in X_0.$$
Hence, for all $f\in Y$
$$P_{LU_{X_0}}(f)\leq C_\eps P_{U_Y}(f)=C_\eps\Vert f\Vert_Y.$$
Then we find for $w$ and $z$ in $X_0$, such that $\|w\|_X\le r$, $\|z\|_X\le r$, and also for $\|u^*\|_X\le r$, that with the notation $$\Theta(r):= 4 C_\eps (\alpha+\beta)C_{H^1(\mathbb{T}_t;H^{s_0}(\Omega))\to L^\infty(\mathbb{T}_t\times\Omega)}\varepsilon r$$ it holds
$$P_{LU_{X_0}}(\Phi(w)-\Phi(z))\leq \Theta(r) \Vert w-z\Vert_X.$$
Thus we apply~\cite[1.5~Cor., p.~368]{Sukhinin}  with
$f(x)=L(x)-\Phi(x)$ and $x_0=0$. Therefore, knowing that $C_\eps=\frac{C_0}{\eps
\nu}$, we have, that for all  $r\in[0,r_{*}[$ with
\begin{equation}\label{Eqret}
 r_{*}=\frac{\nu}{4 C_0 (\alpha+\beta)C_{H^1(\mathbb{T}_t;H^{s_0}(\Omega))\to L^\infty(\mathbb{T}_t\times\Omega)}}=O(1),
\end{equation}
  for all $y\in \Phi(0)+w(r) L U_{X_0}\subset Y$
with $$w(r)= r-2 \frac{C_0}{\nu} C_{H^1(\mathbb{T}_t;H^{s_0}(\Omega))\to L^\infty(\mathbb{T}_t\times\Omega)} (\alpha+\beta) r^2,$$
there exists a unique $v\in 0+r U_{X_0}$ such that $L(v)-\Phi(v)=y$.
Since we are seeking  $v$, which solves the non-linear
problem~(\ref{SystkuznV}), we need to impose $y=0$, $i.e.$ 
that $v$ be the solution of the non-linear  problem~(\ref{SystkuznV}), then we
need to impose $y=0$ and thus,
to ensure that $$0\in \Phi(0)+w(r) L U_{X_0}.$$
Since $-\frac{1}{w(r)}\Phi(0)$ is an element of $Y$ and $LX_0=Y$, there exists a unique $z\in X_0$ such that
\begin{equation}\label{Eqz}
 L z=-\frac{1}{w(r)}\Phi(0).
\end{equation}
Let us show that $\|z\|_X\le 1$, what will implies that $0\in \Phi(0)+w(r) L U_{X_0}$.
Noticing that
\begin{align*}
\Vert \Phi(0)\Vert_Y & \leq \alpha \varepsilon \Vert v_t v_{tt}\Vert_Y +\beta \varepsilon \Vert \nabla v \nabla v_t\Vert_Y\\
& \leq  (\alpha+\beta) \varepsilon C_{H^1(\mathbb{T}_t;H^{s_0}(\Omega))\to L^\infty(\mathbb{T}_t\times\Omega)}\Vert v\Vert_X^2 \\
& \leq (\alpha+\beta) \varepsilon C_{H^1(\mathbb{T}_t;H^{s_0}(\Omega))\to L^\infty(\mathbb{T}_t\times\Omega)}r^2
\end{align*}
and using~(\ref{Eqz}) we find
\begin{align*}
 & \Vert z\Vert_X \leq C_\eps\Vert L z\Vert_Y=C_\eps\frac{\Vert \Phi(0)\Vert_Y}{w(r)}\\
 &\leq \frac{C_\eps C_{H^1(\mathbb{T}_t;H^{s_0}(\Omega))\to L^\infty(\mathbb{T}_t\times\Omega)} (\alpha+\beta) \varepsilon r}{(1-2 C_\eps C_{H^1(\mathbb{T}_t;H^{s_0}(\Omega))\to L^\infty(\mathbb{T}_t\times\Omega)} (\alpha+\beta)\varepsilon r)}<\frac{1}{2},
\end{align*}
as soon as $r<r^*$.

Consequently, $z\in U_{X_0}$ and $\Phi(0)+w(r) Lz=0$.
Then we conclude that  for all  $r\in[0,r_{*}[$, if $\|u^*\|_X\le r$, there
exists a unique $v\in r U_{X_0}$ such that $L(v)-\Phi(v)=0$, $i.e.$
 $v$ is the solution of the non-linear problem~(\ref{SystkuznV}).
Thanks to the maximal regularity and a priori estimate following from Theorem~\ref{thmLinPer} with $f=0$,
there exists a constant $C_1=O(\eps^0)>0$, such that
$$\|u^*\|_X\le \frac{C_1}{\sqrt{\nu \eps}}\Vert g\Vert_{\mathbb{F}_{\mathbb{T}}}.$$
Thus, for all  $r\in[0,r_{*}[$ and $\Vert g\Vert_{\mathbb{F}_{\mathbb{T}}}\le \frac{\sqrt{\nu \eps}}{C_1}r$, the function $u=u^*+v\in X$ is the unique solution of the time periodic problem for the Kuznetsov equation and $\Vert u\Vert_X\leq 2 r$.
\end{proof}

\subsection{Initial boundary value problem.}\label{WPKuzhalf2}
We still work on $\Omega=\mathbb{R}_+\times \mathbb{R}^{n-1}$ and we study the
initial boundary value problem for the Kuznetsov equation on this space,
\textit{i.e.} the perturbation of an imposed initial condition by a source on
the boundary, which in Subsection~\ref{sssKKZKperbcIn} was
determined by the solution of the KZK equation.
\begin{lemma}\label{maxregstrdamphalf}
Let $s\geq 0$, $n\in \mathbb{N}$. There exists a unique solution
\begin{equation}\label{Halfspaceregsol}
u\in \mathbb{E}:=H^2(\mathbb{R_+};H^s(\Omega))\cap H^1( \mathbb{R_+};H^{s+2}(\Omega))
\end{equation}
of the linear problem
\begin{equation}\label{eqlinhalf}
\left\lbrace
\begin{array}{c}
u_{tt}-c^2 \Delta u -\nu \varepsilon \Delta u_t=f\;\;\;\hbox{ in }\;\mathbb{R_+}\times \Omega,\\
u=g\;\;\;\hbox{ on }\; \mathbb{R_+}\times\partial\Omega,\\
u(0)=u_0, \;\;\;u_t(0)=u_1\;\;\;\hbox{ in }\; \Omega
\end{array}\right.
\end{equation}
if and only if the data satisfy the following conditions
\begin{itemize}
\item $f\in L^2(\mathbb{R}_+;H^s(\Omega)),$
\item for the boundary condition
\begin{equation}\label{Halfspaceregbound}
g\in \mathbb{F}_{\mathbb{R}_+}= H^{7/4}(\mathbb{R}_+;H^s(\partial\Omega))\cap
H^1(\mathbb{R}_+;H^{s+3/2}(\partial\Omega)),
\end{equation}
\item $u_0\in H^{s+2}(\Omega)$ and $u_1\in H^{s+1}(\Omega)$,
\item $g(0)=u_0$ and $g_t(0)=u_1$ on $\partial\Omega $ in the trace sense.
\end{itemize}
In addition, the solution satisfies the stability estimate
$$\Vert u\Vert_{\mathbb{E}}\leq C (\Vert f\Vert_{L^2(\mathbb{R}_+;H^s(\Omega))}+\Vert g\Vert_{\mathbb{F}_{\mathbb{R}_+}}+\Vert u_0\Vert_{H^{s+2}}+\Vert u_1\Vert_{H^{s+1}}).$$
\end{lemma}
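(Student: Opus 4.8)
The plan is to mirror the proof of the periodic result Theorem~\ref{ThCelik} and Lemma~\ref{thmLinPer}, the only new feature being the presence of initial data, and to rely on the same anisotropic trace theory of Refs.~\cite{Denk,Celik}. For the necessity I would read the four conditions directly off a solution $u\in\mathbb{E}$: the three terms $u_{tt}$, $\Delta u$ and $\Delta u_t$ all lie in $L^2(\mathbb{R}_+;H^s(\Omega))$ (the last two because $u\in H^1(\mathbb{R}_+;H^{s+2}(\Omega))$ gives $u,u_t\in L^2(\mathbb{R}_+;H^{s+2}(\Omega))$), so $f\in L^2(\mathbb{R}_+;H^s(\Omega))$; the lateral trace theorem for $\mathbb{E}$, whose mixed time--space order is exactly the one yielding the exponent $7/4$ already encountered in~(\ref{timeperboundreg}), gives $g=u\vert_{\partial\Omega}\in\mathbb{F}_{\mathbb{R}_+}$; and the temporal traces at $t=0$ are controlled through the interpolation identity $(H^s(\Omega),H^{s+2}(\Omega))_{1/2,2}=H^{s+1}(\Omega)$, which places $u_0=u(0)\in H^{s+2}(\Omega)$ and $u_1=u_t(0)\in H^{s+1}(\Omega)$. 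Continuity of these traces then forces the compatibility relations $g(0)=u_0$ and $g_t(0)=u_1$ on $\partial\Omega$.

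For the sufficiency I would first remove the boundary datum. By the same trace theorem there is a lifting $G\in\mathbb{E}$ with $G\vert_{\partial\Omega}=g$ and $\Vert G\Vert_{\mathbb{E}}\le C\Vert g\Vert_{\mathbb{F}_{\mathbb{R}_+}}$; then $w:=u-G$ has to solve~(\ref{eqlinhalf}) with homogeneous boundary data, new source $\tilde f:=f-(G_{tt}-c^2\Delta G-\nu\varepsilon\Delta G_t)\in L^2(\mathbb{R}_+;H^s(\Omega))$ and new initial data $\tilde u_0:=u_0-G(0)$, $\tilde u_1:=u_1-G_t(0)$. Here the compatibility conditions are exactly what is needed to guarantee that $\tilde u_0\in H^{s+2}(\Omega)$ and $\tilde u_1\in H^{s+1}(\Omega)$ have vanishing Dirichlet trace, so that they are admissible data for a homogeneous-boundary problem. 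It then remains to solve, with the corresponding estimate, the pure initial value problem $w_{tt}-c^2\Delta w-\nu\varepsilon\Delta w_t=\tilde f$ on $\mathbb{R}_+\times\Omega$ with $w\vert_{\partial\Omega}=0$, $w(0)=\tilde u_0$, $w_t(0)=\tilde u_1$.

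For this core problem I would pass to the operator $A:=-\Delta$ with homogeneous Dirichlet condition on $\Omega$, a nonnegative self-adjoint operator on $H^s(\Omega)$ commuting with the tangential derivatives $\partial_{x'}$, and rewrite the equation as $w_{tt}+\nu\varepsilon A w_t+c^2 A w=\tilde f$. The strong damping $\nu\varepsilon A w_t$ (with $\nu\varepsilon>0$) is what makes the associated first-order system the generator of an analytic semigroup: its characteristic branches $\lambda_\pm=\tfrac12\bigl(-\nu\varepsilon A\pm\sqrt{\nu^2\varepsilon^2A^2-4c^2A}\bigr)$ have negative real part, the branch $\lambda_-\sim-\nu\varepsilon A$ carrying the parabolic smoothing. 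In the Hilbert-space setting $H^s(\Omega)$ this analyticity produces, by de~Simon's theorem, the maximal $L^2$-regularity estimate $\Vert w\Vert_{\mathbb{E}}\le C(\Vert\tilde f\Vert_{L^2(\mathbb{R}_+;H^s(\Omega))}+\Vert\tilde u_0\Vert_{H^{s+2}(\Omega)}+\Vert\tilde u_1\Vert_{H^{s+1}(\Omega)})$. Concretely it is realized by a tangential Fourier transform in $x'$ together with a Laplace transform in $t$, reducing to an ordinary differential equation in the normal variable $x_1\in\mathbb{R}_+$ whose resolution gives the $H^{s+2}$ regularity in $x_1$ from the elliptic part and the $H^s$ regularity in $x'$ from commutation with the tangential frequencies, the data $\tilde u_0,\tilde u_1$ entering as the boundary terms of the Laplace transform. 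Undoing the two reductions, $u=w+G\in\mathbb{E}$ solves~(\ref{eqlinhalf}) and the stability estimate follows by adding the three contributions; uniqueness is immediate, since the same estimate applied to the difference of two solutions (all data zero) forces it to vanish.

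The main obstacle is the core maximal-regularity estimate itself, that is, the uniform (in the dual variables) invertibility of the symbol $-\tau^2+(c^2+i\nu\varepsilon\tau)|\xi|^2$ associated with $\partial_t^2-c^2\Delta-\nu\varepsilon\Delta\partial_t$, exactly the half-line counterpart of Celik's result~\cite{Celik} used in Theorem~\ref{ThCelik}. As in the periodic problem, the delicate region is that of low spatial and temporal frequencies, where the strong damping degenerates; in the periodic setting it was tamed by the mean-value-zero constraint, while here it is the initial-data terms (and, for the applications of Theorem~\ref{ThMainWPnuPGlobHalf}, the restriction to a finite time interval $[0,T]$) that play the analogous role. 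A secondary, purely technical point is the sharp identification of the trace space $\mathbb{F}_{\mathbb{R}_+}$ in~(\ref{Halfspaceregbound}) and the verification that the four compatibility conditions are precisely those inherited from the traces of $\mathbb{E}$; both are handled, as in the periodic case, by the anisotropic trace theory of Refs.~\cite{Denk,Celik}.
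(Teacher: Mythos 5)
Your necessity argument is essentially the paper's own (read $f$ off the equation, apply the anisotropic trace theorem of~\cite{Denk} to $u$ and $u_t$, take temporal traces by interpolation), and your core step for the homogeneous-Dirichlet problem is in the spirit of what the paper gets from~\cite[Thm.~2.6]{Haraux}. The genuine gap is your very first step in the sufficiency part: the existence of a lifting $G\in\mathbb{E}$ with $G\vert_{\partial\Omega}=g$ and $\Vert G\Vert_{\mathbb{E}}\leq C\Vert g\Vert_{\mathbb{F}_{\mathbb{R}_+}}$ is \emph{not} given by "the same trace theorem". The trace result actually available (\cite[Lem.~3.5]{Denk}, the one used in the paper) concerns the space $H^1(\mathbb{R}_+;H^s(\Omega))\cap L^2(\mathbb{R}_+;H^{s+2}(\Omega))$ and its trace space $H^{3/4}\cap L^2$, one time-derivative below $\mathbb{E}$ and $\mathbb{F}_{\mathbb{R}_+}$; surjectivity of the trace map from $\mathbb{E}$ onto $\mathbb{F}_{\mathbb{R}_+}$ (with the corner compatibilities) is precisely the nontrivial content that the lemma itself encodes, so invoking it as an off-the-shelf fact is close to assuming what must be proved. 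Note also that on the \emph{infinite} time interval membership in $\mathbb{E}$ forces $G$ itself to be in $L^2(\mathbb{R}_+;H^{s+2}(\Omega))$, so the naive lifting (elliptic lifting of $g(0)$ plus time-integration of a Denk lifting of $g_t$) fails for lack of decay: any admissible lifting must come from a dissipative solution operator. This is exactly what the paper builds in Lemma~\ref{lemwpheat}, where the lifting is the time-integral of the solution of a heat problem with lateral datum $g_t$, via~\cite{Ladyzhenskaya} --- and, crucially, that construction requires $g(0)=0$ and $g_t(0)=0$. Your reduction order (lift $g$ first, with nonzero corner traces $g(0)=u_0\vert_{\partial\Omega}$, $g_t(0)=u_1\vert_{\partial\Omega}$) therefore cannot borrow that lemma; this is the reason the paper proceeds in the opposite order, first absorbing $(f,u_0,u_1)$ by odd extension in $x_1$ and the whole-space Cauchy theory of~\cite[Thm.~4.1]{Perso}, which reduces to a boundary datum $\overline{g}$ with vanishing corner traces, then lifting by Lemma~\ref{lemwpheat}, and finally correcting with a homogeneous-data problem solved by~\cite[Thm.~2.6]{Haraux}.

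The gap is repairable, but not by trace theory alone: either reorder the reductions as the paper does, or prove the lifting directly (e.g., extend $g$ in time, take the Fourier--Laplace extension $\widehat{G}(\tau,x_1,\xi')=\widehat{g}(\tau,\xi')e^{-x_1\sqrt{1+i\tau+|\xi'|^2}}$ and check, as a multiplier computation, that the exponent $7/4$ in $\mathbb{F}_{\mathbb{R}_+}$ is exactly what makes $G\in\mathbb{E}$) --- which is again a parabolic solution operator in disguise. In other words, what you dismissed in your last paragraph as a "secondary, purely technical point" (the sharp identification of the trace space of $\mathbb{E}$ and its lifting) is in fact the main content of the lemma, while the symbol estimate you flag as the principal obstacle is handled by citation in the paper. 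Two smaller points: $-\Delta$ with Dirichlet conditions is self-adjoint on $L^2(\Omega)$, not on $H^s(\Omega)$, so your semigroup setup needs either the $L^2$-realization plus elliptic regularity or the half-space reflection structure; and your maximal-regularity estimate with nonzero initial data requires identifying $(H^{s+2}(\Omega)\cap H^1_0)\times H^{s+1}(\Omega)$ with the correct real-interpolation trace space of the generator, which is the substance of the cited result of~\cite{Haraux} rather than a consequence of de~Simon's theorem alone.
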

In order to prove this result we will use the following lemma to remove the inhomogeneity $g$.
\begin{lemma}\label{lemwpheat}
Let $s\geq 0$, $n\in \mathbb{N}$ and $\mathbb{E}$ defined in~(\ref{Halfspaceregsol}). There exists a unique solution $ w\in \mathbb{E}$ of the following linear problem
\begin{equation}\label{heatderiv}
\left\lbrace
\begin{array}{c}
w_{tt} -\nu \varepsilon \Delta w_t=0\;\;\;\hbox{ in }\;\mathbb{R_+}\times \Omega,\\w=g\;\;\;\hbox{ on }\; \mathbb{R_+}\times\partial\Omega,\\
w(0)=0, \;\;\;w_t(0)=0\;\;\;\hbox{ in }\; \Omega
\end{array}\right.
\end{equation}
if and only if

$g\in \mathbb{F}_{\mathbb{R}_+}$ (the space $\mathbb{F}_{\mathbb{R}_+}$ is defined in~(\ref{Halfspaceregbound})) and it holds the following compatibility conditions:

for all $x\in\partial\Omega$, $g(0)=0$ and $g_t(0)=0$.

Moreover, the solution $w$ satisfies the stability estimate
$$\Vert w\Vert_{\mathbb{E}}\leq C \Vert g\Vert_{\mathbb{F}_{\mathbb{R}_+}}.$$
\end{lemma}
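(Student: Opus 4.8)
The plan is to reduce the second-order-in-time problem~(\ref{heatderiv}) to a genuine heat equation by differentiating once in time. Setting $v:=w_t$, the equation $w_{tt}-\nu\varepsilon\Delta w_t=0$ becomes the homogeneous heat equation $v_t-\nu\varepsilon\Delta v=0$ on $\mathbb{R}_+\times\Omega$, with Dirichlet datum $v=g_t$ on $\mathbb{R}_+\times\partial\Omega$ and zero initial datum $v(0)=w_t(0)=0$. Conversely, once $v$ is known, the solution is recovered by time integration, $w(t):=\int_0^t v(\sigma)\,d\sigma$, which is consistent with $w(0)=0$. This is exactly why the two compatibility conditions enter: $g_t(0)=0$ is the zero initial trace required by the heat problem for $v$, while $g(0)=0$ is what makes $\int_0^t g_t=g$ hold, i.e. what makes the reconstructed $w$ attain the prescribed boundary datum $g$. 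The two initial conditions of~(\ref{heatderiv}) are thus encoded as the single heat initial condition $v(0)=0$ together with the normalisation fixing the constant of integration.

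For the sufficiency direction I would start from $g\in\mathbb{F}_{\mathbb{R}_+}$ together with the two compatibility conditions and apply the maximal $L^2$-regularity theory for the inhomogeneous Dirichlet heat equation on the half-space. The anisotropic trace theorems already used for the periodic problem (see~\cite{Celik} and~\cite{Denk}) identify the trace space of the heat maximal-regularity class $H^1(\mathbb{R}_+;H^s(\Omega))\cap L^2(\mathbb{R}_+;H^{s+2}(\Omega))$ as exactly the space in which $g_t$ must lie once $g\in\mathbb{F}_{\mathbb{R}_+}$ and $g_t(0)=0$; this produces a unique $v$ in that class with $v=g_t$ on $\partial\Omega$, $v(0)=0$, and $\|v\|_{H^1(\mathbb{R}_+;H^s)\cap L^2(\mathbb{R}_+;H^{s+2})}\le C\|g\|_{\mathbb{F}_{\mathbb{R}_+}}$. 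Integrating in time then gives $w\in H^2(\mathbb{R}_+;H^s(\Omega))\cap H^1(\mathbb{R}_+;H^{s+2}(\Omega))=\mathbb{E}$, and one checks directly that $w_{tt}-\nu\varepsilon\Delta w_t=\partial_t(v_t-\nu\varepsilon\Delta v)=0$, that $w(0)=0$ and $w_t(0)=v(0)=0$, and that $w=\int_0^t g_t=g$ on the boundary because $g(0)=0$. The stability estimate $\|w\|_{\mathbb{E}}\le C\|g\|_{\mathbb{F}_{\mathbb{R}_+}}$ then follows from the heat estimate and the boundedness of time integration between these spaces.

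For necessity and uniqueness I would argue in reverse: if $w\in\mathbb{E}$ solves~(\ref{heatderiv}), then $v:=w_t$ belongs to the heat maximal-regularity class and solves the heat equation with boundary datum $g_t$, so the trace theorem forces $g_t$ into the heat trace space, hence $g\in\mathbb{F}_{\mathbb{R}_+}$ (integrating, and using $w(0)=0$ to obtain $g(0)=0$), while the initial traces give $g(0)=0$ and $g_t(0)=v(0)=0$. Uniqueness is inherited from the heat equation: two solutions in $\mathbb{E}$ yield, after differentiation, two solutions of the same heat problem, which coincide by maximal regularity, and the normalisation $w(0)=0$ removes the remaining constant. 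The main obstacle is the precise identification of the trace space under this heat reduction, namely establishing the norm equivalence between $\|g\|_{\mathbb{F}_{\mathbb{R}_+}}$ (with $g(0)=0$) and the heat trace norm of $g_t$; this is the one step that relies essentially on the anisotropic (parabolic) trace theory of~\cite{Denk} rather than on the elementary reduction and the time integration. Once this lemma is in hand, the inhomogeneity $g$ in~(\ref{eqlinhalf}) can be lifted and Lemma~\ref{maxregstrdamphalf} reduces to the homogeneous-boundary case.
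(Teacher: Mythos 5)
Your proposal follows essentially the same route as the paper's own proof: differentiate in time to reduce~(\ref{heatderiv}) to the Dirichlet heat problem $v_t-\nu\varepsilon\Delta v=0$ with boundary datum $g_t$ and zero initial datum (solved via parabolic maximal regularity, in the paper by Lady\v{z}enskaja's theory), recover $w$ by time integration using $g(0)=0$, obtain necessity from the anisotropic spatial and temporal trace theorems of~\cite{Denk},~\cite{Blasio} and~\cite{Grisvard}, and get uniqueness from uniqueness of the homogeneous heat problem together with $w(0)=0$. Your reading of the two compatibility conditions and the stability estimate matches the paper's argument, so no substantive difference to report.
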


\begin{proof}
First we prove the sufficiency. By assumption~(\ref{Halfspaceregbound}), we have
$$\partial_t g\in H^{3/4}(\mathbb{R}_+;H^s(\partial\Omega))\cap L^2(\mathbb{R}_+;H^{s+3/2}(\partial\Omega)).$$
Thanks to $\S$~3 in Ref.~\cite[p.~288]{Ladyzhenskaya}, we obtain
a unique solution
$$v\in H^1(\mathbb{R_+};H^s(\Omega))\cap L^2( \mathbb{R_+};H^{s+2}(\Omega))$$
of the parabolic problem
$$v_t-\nu \varepsilon \Delta v=0\;\hbox{ in }\;\mathbb{R}_+\times \Omega,\;v=\partial_t g\;\hbox{ on }\;\mathbb{R}_+\times\partial\Omega,\;v(0)=0\;\hbox{ in }\;\Omega.$$
Next we define for $t\in\mathbb{R}_+$ and $x\in\Omega$ the function
$$w(t,x):=\int_0^t v(l,x)dl. $$ 
We have $w(0)=0$ and $w_t(0)=0$. Moreover, it satisfies
$$w_{tt}-\nu\varepsilon \Delta w_t=0, \quad
w(t)\vert_{\partial\Omega}=\int_0^t g_t(l)\;dl=g(t),$$
as $g(0)=0$. Therefore, $w$ is a solution of problem~(\ref{heatderiv}). The necessity follows from the spatial trace theorem ensuring that the trace operator $Tr_{\partial\Omega}:u  \mapsto u\vert_{\partial\Omega}$, considering as a map
\begin{align}
&H^1(\mathbb{R_+};H^s(\Omega))\cap L^2( \mathbb{R_+};H^{s+2}(\Omega))  \rightarrow H^{3/4}(\mathbb{R}_+;H^s(\partial\Omega))\cap L^2(\mathbb{R}_+;H^{s+3/2}(\partial\Omega)),\label{Trachalf1}
\end{align}
is bounded and surjective by~\cite[Lem.~3.5]{Denk}.
For the compatibility condition, thanks to~\cite[Lem.~11]{Blasio},
we also know that the temporal trace $Tr_{t=0}:g\mapsto~g\vert_{t=0}$,
considered as a map
\begin{align}
&H^{3/4}(\mathbb{R}_+;H^s(\partial\Omega))\cap L^2(\mathbb{R}_+;H^{s+3/2}(\partial\Omega)) \rightarrow H^{s+1/2}(\partial\Omega),
\label{Trachalf2}
\end{align}
is well defined and bounded. Moreover, the spatial trace
\begin{equation}\label{Trachalf3}
H^{s+1/2}(\Omega)\rightarrow H^s(\partial\Omega)
\end{equation}
is bounded by~\cite[Thm.~1.5.1.1]{Grisvard}.

To obtain uniqueness, let $w$ be a solution to~(\ref{heatderiv}) with $g=0$. Since $w_t$ solves the heat problem with homogeneous data, we obtain $w_t=0$ and therefore also $w=0$ by the initial condition $w(0)=0$.
The stability estimate follows from the closed graph theorem.
\end{proof}
Let us prove Lemma~\ref{maxregstrdamphalf}:
\begin{proof}
We obtain the uniqueness of the solution of the boundary value problem for the linear strongly damped equation~(\ref{eqlinhalf}) from the fact that in the case $g=0$ we can consider $-\Delta$ as a self-adjoint and non negative operator with homogeneous Dirichlet boundary conditions and we can use~\cite{Haraux}.

To verify the necessity of the conditions on the data, we suppose that $u\in\mathbb{E}$ (see Eq.~(\ref{Halfspaceregsol}) for the definition of $\mathbb{E}$) is a solution of~(\ref{eqlinhalf}).
Then
$$u,\;u_t\in  H^1(\mathbb{R_+};H^s(\Omega))\cap L^2( \mathbb{R_+};H^{s+2}(\Omega)) \hbox{ and thus } f\in L^2(\mathbb{R}_+;H^s(\Omega)).$$
Taking as in the previous proof the spatial trace $Tr_{\partial\Omega}$ as in Eq.~(\ref{Trachalf1}) we have
$$g,\;g_t\in H^{3/4}(\mathbb{R}_+;H^s(\partial\Omega))\cap L^2(\mathbb{R}_+;H^{s+3/2}(\partial\Omega)), \hbox{ which implies }g\in \mathbb{F}_{\mathbb{R}_+}.$$
By the Sobolev embedding $H^1(\mathbb{R}_+;H^{s+2}(\Omega))\hookrightarrow C(\mathbb{R}_+;H^{s+2}(\Omega))$, it follows that $u_0\in H^{s+2}(\Omega)$ and we also have the temporal trace
$$u\mapsto u\vert_{t=0}:H^1(\mathbb{R_+};H^s(\Omega))\cap L^2( \mathbb{R_+};H^{s+2}(\Omega))\rightarrow H^{s+1}(\Omega)$$
by~\cite[Lem.~3.7]{Denk}. Following the proof of
Lemma~\ref{lemwpheat}, we use Eqs.~(\ref{Trachalf2})
and~(\ref{Trachalf3}) to obtain the compatibility conditions.

It remains to prove the sufficiency of the conditions. We extend $u_0$, $u_1$ and $f$ in odd functions among $x_1$ on $\mathbb{R}^n$ so that we have
$$\tilde{u}_0\in H^{s+2}(\mathbb{R}^n),\; \tilde{u}_1\in H^{s+1}(\mathbb{R}^n)
\; \hbox{ and }\;  \tilde{f}\in L^2(\mathbb{R}_+;H^s(\mathbb{R}^n)).$$
 Considering the non homogeneous linear Cauchy problem
$$
\left\lbrace
\begin{array}{l}
\tilde{u}_{tt}-c^2\Delta \tilde{u} -\nu \varepsilon \Delta \tilde{u}_t=\tilde{f}\;\;\;\hbox{ in }\;\mathbb{R_+}\times \mathbb{R}^n,\\
\tilde{u}(0)=\tilde{u}_0, \;\;\;\tilde{u}_t(0)=\tilde{u}_0\;\;\;\hbox{ in }\;
\mathbb{R}^n,
\end{array}\right.$$
by~\cite[Thm.~4.1]{Perso} we obtain the
existence of its unique solution
$$\tilde{u}\in H^2(\mathbb{R_+};H^s(\mathbb{R}^n))\cap H^1( \mathbb{R_+};H^{s+2}(\mathbb{R}^n)).$$
Let $\overline{u}\in \mathbb{E}$, denote the restriction of $\tilde{u}$ to $\Omega$ and let $\overline{g}:= g-\overline{u}\vert_{\partial\Omega}$.
By the spatial trace theorem $\overline{u}\vert_{\partial\Omega}\in \mathbb{F}_{\mathbb{R}_+}$, and hence $\overline{g}\in  \mathbb{F}_{\mathbb{R}_+}$. Then the solution $u$ of the non homogeneous linear problem~(\ref{eqlinhalf}) is given by $u=v+\overline{u}$, where $v$ solves  problem~(\ref{eqlinhalf}) with $f=u_0=u_1=0$ and $g=\overline{g}$. %
>From Lemma~\ref{lemwpheat} we have a unique solution $\overline{v}\in \mathbb{E}_u$ of problem~(\ref{heatderiv}) with $g=\overline{g}$.
Then the function $w:=v-\overline{v}$ solves the following system
$$
\left\lbrace
\begin{array}{l}
w_{tt}-\Delta w -\nu \varepsilon \Delta w_t= c^2 \Delta \overline{v}\;\;\;\hbox{ in }\;\mathbb{R_+}\times \Omega,\\
w=0\;\;\;\hbox{ on }\; \mathbb{R_+}\times\partial\Omega,\\
w(0)=0, \;\;\;w_t(0)=0\;\;\;\hbox{ in }\; \Omega,
\end{array}\right.
$$
which thanks to~\cite[Thm.~2.6]{Haraux} has a
unique solution $w\in \mathbb{E}$ defined in~(\ref{Halfspaceregsol}). The
function $u:=w+\overline{v}+\overline{u}$ is the desired solution of
system~(\ref{eqlinhalf}) and the stability estimate follows from the closed
graph theorem. This concludes the proof of Lemma~\ref{maxregstrdamphalf}.
\end{proof}

 The next theorem follows from  the maximal regularity result of
Lemma~\ref{maxregstrdamphalf} and
of~\cite[1.5.~Cor.,~p.~368]{Sukhinin}. Its
proof is similar to the proof of Theorem~\ref{globwellposKuzper} and hence is
omitted.
\begin{theorem}\label{ThMainWPnuPGlobHalf}
 Let $\nu>0$, $n\in \mathbb{N}^*$, $\Omega=\mathbb{R}_+\times \mathbb{R}^{n-1}$ and $s>\frac{n}{2}$. Considering the initial boundary value problem for the Kuznetsov equation in the half space with the Dirichlet boundary condition~(\ref{kuzhalfinitbound})
the following results hold: there exist constants $r^*=O(1)$ and $C_1=O(1)$, such that for all  initial data satisfying 
\begin{itemize}
\item $g\in \mathbb{F}_{\mathbb{R}^+}:= H^{7/4}([0,\infty[;H^s(\partial\Omega))\cap H^1([0,\infty[;H^{s+3/2}(\partial\Omega))$,
\item $u_0\in H^{s+2}(\Omega)$, $u_1\in H^{s+1}(\Omega)$,
\item $g(0)=u_0\vert_{\partial\Omega}$ and $g_t(0)=u_1\vert_{\partial\Omega}$,
\end{itemize} and such that for $r\in [0,r^*[$
\begin{equation*}
 \Vert u_0\Vert_{H^{s+2}(\Omega)}+\Vert u_1\Vert_{H^{s+1}(\Omega)}+\Vert g\Vert_{\mathbb{F}_{[0,T]}}\leq \frac{\nu \varepsilon}{C_1}r,
\end{equation*}
 there exists  a unique solution of problem~(\ref{kuzhalfinitbound}) for the Kuznetsov equation
  $$u\in H^2([0,\infty[;H^s(\Omega))\cap H^1( [0,\infty[;H^{s+2}(\Omega)), $$
  such that
  $$ \Vert u\Vert_{H^2([0,\infty[;H^s(\Omega))\cap H^1( [0,\infty[;H^{s+2}(\Omega))}\leq 2r.$$
  \end{theorem}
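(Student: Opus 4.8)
The plan is to mirror the proof of Theorem~\ref{globwellposKuzper}, replacing the periodic maximal regularity result of Lemma~\ref{thmLinPer} by the initial--boundary maximal regularity result of Lemma~\ref{maxregstrdamphalf}, and to close the argument with the nonlinear inverse-function statement \cite[1.5.~Cor.,~p.~368]{Sukhinin}. First I would use Lemma~\ref{maxregstrdamphalf} to produce $u^{*}\in\mathbb{E}$, the unique solution of the linear strongly damped problem~(\ref{eqlinhalf}) with source $f=0$ and with the prescribed data $(g,u_0,u_1)$; the compatibility conditions $g(0)=u_0\vert_{\partial\Omega}$ and $g_t(0)=u_1\vert_{\partial\Omega}$ are exactly those assumed in the theorem. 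Tracking the $\eps$- and $\nu$-dependence of the stability estimate of Lemma~\ref{maxregstrdamphalf}, I expect a bound of the form $\Vert u^{*}\Vert_{\mathbb{E}}\le \frac{C_1}{\nu\eps}\left(\Vert u_0\Vert_{H^{s+2}(\Omega)}+\Vert u_1\Vert_{H^{s+1}(\Omega)}+\Vert g\Vert_{\mathbb{F}_{\mathbb{R}_+}}\right)$, so that the smallness hypothesis guarantees $\Vert u^{*}\Vert_{\mathbb{E}}\le r$.

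Next I would set $\mathbb{E}_0:=\{u\in\mathbb{E}\mid u\vert_{\partial\Omega}=0,\ u(0)=u_t(0)=0\}$ and $Y:=L^2(\mathbb{R}_+;H^s(\Omega))$, and observe that by Lemma~\ref{maxregstrdamphalf} the operator $L:\mathbb{E}_0\to Y$, $Lu:=u_{tt}-c^2\Delta u-\nu\eps\Delta u_t$, is a bicontinuous isomorphism with $\Vert u\Vert_{\mathbb{E}}\le C_\eps\Vert Lu\Vert_Y$ and $C_\eps=O(\frac{1}{\nu\eps})$. Looking for $u=u^{*}+v$ with $v\in\mathbb{E}_0$ reduces~(\ref{kuzhalfinitbound}) to the fixed-point problem $Lv=\Phi(v)$, where $\Phi(v):=\alpha\eps(v+u^{*})_t(v+u^{*})_{tt}+\beta\eps\nabla(v+u^{*})\cdot\nabla(v+u^{*})_t$; since $v$ carries homogeneous boundary and initial data, the compatibility conditions for $v$ are automatically satisfied.

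On the ball $\Vert w\Vert_{\mathbb{E}},\Vert z\Vert_{\mathbb{E}}\le r$ (and with $\Vert u^{*}\Vert_{\mathbb{E}}\le r$) the same product estimates as in Theorem~\ref{globwellposKuzper}, resting on the embedding $H^1(\mathbb{R}_+;H^{s_0}(\Omega))\hookrightarrow L^\infty(\mathbb{R}_+\times\Omega)$ for $s_0>\frac{n}{2}$, give $\Vert\Phi(w)-\Phi(z)\Vert_Y\le 4(\alpha+\beta)C_{emb}\eps r\,\Vert w-z\Vert_{\mathbb{E}}$. Multiplying by $C_\eps$ yields a Lipschitz constant $\Theta(r)=O(r/\nu)$ in which the factor $\eps$ has cancelled, so that $\Theta(r)<\tfrac12$ precisely for $r<r^{*}=O(1)$. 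I would then apply \cite[1.5.~Cor.,~p.~368]{Sukhinin} with $f(x)=L(x)-\Phi(x)$ and $x_0=0$, checking as in Theorem~\ref{globwellposKuzper} that $0$ lies in the admissible image (equivalently, that the auxiliary $z\in\mathbb{E}_0$ solving $Lz=-\Phi(0)/w(r)$ satisfies $\Vert z\Vert_{\mathbb{E}}<\tfrac12$). This produces a unique $v\in r\,U_{\mathbb{E}_0}$ with $Lv=\Phi(v)$, whence $u=u^{*}+v\in\mathbb{E}$ is the desired solution with $\Vert u\Vert_{\mathbb{E}}\le 2r$.

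The main obstacle is the uniform control of the constants in $\eps$: everything hinges on certifying that the maximal-regularity constant of Lemma~\ref{maxregstrdamphalf} degenerates exactly like $C_\eps=O(\frac{1}{\nu\eps})$, so that the $\eps$ carried by the nonlinearity cancels against it and the threshold $r^{*}$ together with $C_1$ remain $O(1)$ as $\eps\to0$. This is what makes the smallness condition $\Vert u_0\Vert_{H^{s+2}(\Omega)}+\Vert u_1\Vert_{H^{s+1}(\Omega)}+\Vert g\Vert_{\mathbb{F}_{\mathbb{R}_+}}\le\frac{\nu\eps}{C_1}r$ nontrivial and is precisely the property required by the approximation Theorem~\ref{approxKuzKZKbis}. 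The remaining ingredients — the product and embedding estimates and the bookkeeping of the Sukhinin corollary — are verbatim repetitions of the periodic case, which is why they may be omitted.
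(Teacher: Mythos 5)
Your proposal is correct and follows precisely the route the paper itself takes: the paper omits this proof, stating only that it combines the maximal regularity of Lemma~\ref{maxregstrdamphalf} with the Sukhinin corollary along the same lines as Theorem~\ref{globwellposKuzper}, which is exactly your decomposition $u=u^{*}+v$, the isomorphism $L:\mathbb{E}_0\to Y$ with $C_\eps=O(\frac{1}{\nu\eps})$, and the cancellation of $\eps$ in the Lipschitz constant yielding $r^{*}=O(1)$.
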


\medskip
Received May 2019; 1st revision October 2019; 2nd revision January 2020.
\medskip

\end{document}